\documentclass[11pt,reqno]{article}

\usepackage[text={155mm,235mm},centering]{geometry}
\usepackage{enumerate}

\usepackage[mathscr]{eucal}
\usepackage[all]{xy}
\usepackage{mathrsfs}
\usepackage{authblk}
\usepackage{amsfonts}
\usepackage{amsmath}
\usepackage{amsthm}
\usepackage{tikz-cd}
\usepackage{amssymb,bm}
\usepackage{latexsym}
\usepackage{tabularx}
\usepackage{graphicx}
\usepackage{wrapfig}
\usepackage{pict2e}
\usepackage{amsmath}
\usepackage{pifont}

\usepackage{pdfpages}
\usepackage{mathtools}
\usepackage{tikz-cd}
\usepackage{enumitem}
\usepackage{xy}

\usepackage[colorlinks=true, allcolors=blue]{hyperref}

\usepackage{amsmath}
\usepackage{caption}
\usepackage{placeins}

\usepackage{amsmath}
\usepackage{amssymb}
\usepackage{tikz-cd}
\usepackage{placeins}
\usepackage{subcaption}

\usepackage{float}

\xyoption{poly}
\xyoption{arc}
\xyoption{knot}
\xyoption{curve}
\xyoption{arrow}
\xyoption{all}
\allowdisplaybreaks
\newtheorem{theorem}{Theorem}[section]

\newtheorem{proposition}[theorem]{Proposition}
\newtheorem{lemma}[theorem]{Lemma}
\newtheorem{construction}[theorem]{Construction}

\newtheorem{corollary}[theorem]{Corollary}

\theoremstyle{definition}

\newtheorem{definition}[theorem]{Definition}
\newtheorem{example}[theorem]{Example}
\newtheorem{notation}[theorem]{Notation}

\newtheorem{remark}[theorem]{Remark}
\newtheorem*{ntt}{Notations}
\newtheorem*{ack}{Acknowledgements}

\newcommand{\lab}{\textup{Col}}

\newcommand{\dq}{\overline{Q}}

\newcommand{\im}{\operatorname{im}}
\newcommand{\symm}{\mathrm{symm}}

\newcommand{\perm}{\mathrm{Perm}}
\newcommand{\id}{\mathrm{id}}
\newcommand{\diag}{\mathrm{diag}}

\newcommand{\na}{\mathbf{N}(Q)_{h,\hbar}}

\newcommand{\la}{\mathfrak{a}}

\usepackage{marginnote}
\let\oldmarginpar\marginpar
\renewcommand\marginpar[1]{\-\oldmarginpar[\raggedleft\footnotesize #1]
{\raggedright\footnotesize\color{red} #1}} 
\begin{document}

\title{Biquantization of the
necklace Lie bialgebra
\footnotetext{Email: xjchen@scu.edu.cn, huang.maozhou@outlook.com, liumeiliang@stu.scu.edu.cn,\\
zhangjun24@stu.scu.edu.cn}}

\author[1,2]{Xiaojun Chen}
\author[2]{Maozhou Huang}
\author[1,2]{Meiliang Liu}
\author[1,2]{Jun Zhang}

\renewcommand\Affilfont{\small}

\affil[1]{School of Mathematics, Sichuan University, Chengdu, Sichuan Province, 
610064 P.R. China}

\affil[2]{Department of Mathematics, New Uzbekistan University,
Tashkent, 100001 Uzbekistan}

\date{}

\maketitle

\begin{abstract} 
For the double of a quiver, the works of Ginzburg, 
Bocklandt-Le Bruyn and Schedler show that its closed 
paths, called the necklaces, have a natural Lie bialgebra 
structure. Schedler also constructed,
in [Int. Math. Res. Notices, 2005 (12), 725-760], 
a Hopf algebra that 
quantizes this Lie bialgebra. In this paper, we pursue 
one more step in this direction by constructing its 
biquantization, in the sense of Turaev [Ann. Sci. \'Ecole 
Norm. Sup. (4) 24 (1991), no. 6, 635-704].

\noindent{\bf Keywords:} 
Necklace Lie bialgebra, Quantization,
Hopf algebra

\noindent{\bf MSC2020:} 16S38, 16T05
\end{abstract}

\setcounter{tocdepth}{2} 
\tableofcontents

\section{Introduction}\label{sect:Intro}

The purpose of this paper is
to construct a {\it biquantization}
of the necklace  Lie bialgebra
of quivers. Let
$Q$ be a quiver and $\overline Q$
be its double, which means
doubling the edges of $Q$
but with an opposite orientation.
It was proved by Ginzburg
\cite{Ginzburg}
and independently by
Bocklandt and Le Bruyn \cite{BocLeB}
that the cyclic paths of $\overline Q$,
called the necklaces,
have a Lie algebra structure. 
Later Schedler showed in
\cite{Schedler} that
the necklaces
also have a Lie cobracket, which is
a Lie cocycle, and
thus form a
Lie bialgebra. Moreover,
Schedler constructed
a Hopf algebra which
quantizes such a Lie bialgebra.
His construction is inspired
by Turaev's quantization of
the Lie bialgebra of loops (i.e., 
the homotopy classes of closed paths)
on Riemann surfaces
(see \cite{Turaev} with
some preceding work due to Goldman
\cite{Goldman}).

Strictly speaking, Schedler's
quantization is a 
\emph{coquantization}
of the Lie bialgebra in
the sense of Turaev \cite[\S7.3]{Turaev}
(see \cite[\S3.6]{Schedler} and \cite[\S2.3]{GinzSche} for more details).
More precisely,
given a Lie bialgebra 
$\mathfrak{g}$, its symmetric product 
$S(\mathfrak{g})$
has an induced Poisson bracket and co-Poisson
cobracket,
which are compatible and thus
form a bi-Poisson bialgebra.
We thus may deform the algebra
structure of $S(\mathfrak{g})$
in the direction of the Lie bracket
without changing the cobracket,
which is a quantization, or deform
the coalgebra structure
in the direction of the Lie
cobracket without changing
the bracket, which is a coquantization.
However, we may also consider the
two-parameter quantization
of a Lie bialgebra: one parameter
quantizes the Lie bracket and
the other quantizes the Lie cobracket,
which some compatibilities
that enhance the cocycle condition
of the Lie bialgebra.
This is exactly Turaev's biquantization
of a Lie bialgebra, and for the Lie bialgera
of loops on a Riemann surface,
such a biquantization is realized
as the skein algebra of links
on the thickened surface.

Since Turaev's and Schedler's constructions
are very similar to each other, it is
desirable to expect that Turaev's 
biquantization exists
for necklace Lie bialgebras
as well.

We also remark that, in a series of papers
\cite{EtiKaz1}-\cite{EtiKaz5},
Etingof and Kazhdan constructed
a quantization of (both finite and infinite
dimensional) Lie bialgebras, by applying
the Drinfeld associator on
the corresponding vector spaces.
Later in \cite{kassel2000biquantization},
Kassel and Turaev continued to construct
a biquantization of finite dimensional
Lie bialgebras. 
Since necklace Lie algebras
are generally infinite
dimensional, 
Kassel-Turaev's biquantization
does not directly apply in this case.
We hope to address this problem
somewhere else.

\subsection{Main result}

For a finite quiver $Q$, 
the set of necklaces is 
the commutator quotient space 
$(k\overline{Q})_\natural
:=k\overline{Q}/[k\overline{Q},k\overline{Q}]$,
where $k\overline{Q}$ is the path
algebra of $\overline{Q}$ over 
a field $k$ of characteristic zero.
The Lie bracket is obtained by removing 
the crossing edges of two necklaces and 
then re-connecting the rest, 
and the Lie 
cobracket is realized by cutting a necklace in its 
self-crossing edges and then reconnecting
the two components (see Definition
\ref{L} below for more details). 
Denote by $L$ the necklace Lie bialgebra
of $Q$.

According to 
Turaev \cite{Turaev},
there are two bialgebras 
$\varepsilon_\hbar(L_\hbar)$ and $V_h(L_h)$ that coquantizes and quantizes 
the symmetric product $S(L)$ of $L$. We
introduce the two canonical morphisms
\[
p_h : \mathbf{N}(Q)_{h,\hbar} \to V_h(L_h), 
\qquad   
p_{\hbar} : \mathbf{N}(Q)_{h,\hbar} \to \varepsilon_{\hbar}(L_\hbar),
\]
and prove that they 
fit into a commutative biquantization diagram, 
thereby realizing 
the simultaneous biquantization of $L$.

\begin{theorem}\label{main}
Suppose that $Q$ is a finite quiver.
Let $L:=(k\overline{ Q})_{\natural}$
be the commutator quotient
space of $k\overline Q$,
equipped with 
the necklace Lie bialgebra
structure.
Then there is a commutative diagram of  biquantization of $L$:
\begin{equation}\label{biquan}
\begin{tikzcd}
	\mathbf{N}(Q)_{h,\hbar} & {V_h(L_h)} \\
	{\varepsilon_\hbar(L_\hbar)} & S(L),
	\arrow["{p_h}", from=1-1, to=1-2]
	\arrow["{p_\hbar}"', from=1-1, to=2-1]
	\arrow["p", from=1-1, to=2-2]
	\arrow["{q_\hbar}", from=1-2, to=2-2]
	\arrow["{q_h}"', from=2-1, to=2-2]
\end{tikzcd}
\end{equation}
where the vertical maps are
the quantizations and the horizontal
maps are the coquantization maps respectively.
\end{theorem}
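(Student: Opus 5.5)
The plan is to build $\mathbf{N}(Q)_{h,\hbar}$ as a two-parameter version of Schedler's construction. As a $k[[h,\hbar]]$-module it is spanned by multisets of necklaces of $\overline Q$ with markings. It carries a product $\mu_{h}$ and a coproduct $\Delta_{\hbar}$ that reduce to the symmetric product and the standard coproduct of $S(L)$ when $h=\hbar=0$. Concretely, the product of two necklace-multisets is their disjoint union plus $h$-weighted terms. These terms resolve each pair of edges $a\in e_1$, $a^*\in e_2$ (or $a^*\in e_1$, $a\in e_2$) belonging to different factors, with the sign fixed by the symplectic pairing. The coproduct is the deconcatenation sum plus $\hbar$-weighted terms that cut a necklace at self-crossings, as in the cobracket of Definition~\ref{L}. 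Mimicking Turaev's skein picture, I would use ordered resolutions (crossings recorded with heights), so that multiplication is strictly associative on the nose and not only modulo $h^2$.

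The key steps come in order. First, verify that $(\mathbf{N}(Q)_{h,\hbar},\mu_h,\Delta_\hbar)$ is a bialgebra over $k[[h,\hbar]]$. Associativity of $\mu_h$ is Schedler's argument. Coassociativity of $\Delta_\hbar$ follows from its dual description via cutting. The hard part is the compatibility $\Delta_\hbar(xy)=\Delta_\hbar(x)\Delta_\hbar(y)$: it amounts to checking that every resolution of a crossing between $x$ and $y$ followed by a cut equals a cut in $x$ or $y$ followed by a resolution, with crossings shared between factors handled by a bijection of configurations. Second, define the maps. Set $p_h:=\mathbf{N}(Q)_{h,\hbar}\to \mathbf{N}(Q)_{h,\hbar}/\hbar=V_h(L_h)$ and $p_\hbar:=\mathbf{N}(Q)_{h,\hbar}/h=\varepsilon_\hbar(L_\hbar)$. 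Identify these quotients with Turaev's $V_h(L_h)$ (Schedler's quantization) and $\varepsilon_\hbar(L_\hbar)$ (the coquantization). Do this by comparing the structure maps on generators and invoking freeness: both sides are topologically free on $S(L)$ via a PBW-type basis of symmetrized necklace monomials. Third, set $q_\hbar$ and $q_h$ to be the reductions modulo the remaining parameter, and let $p$ be reduction modulo $(h,\hbar)$. Commutativity of \eqref{biquan} is then automatic, since all maps are quotient maps by ideals $(\hbar)\subset(h,\hbar)$ and $(h)\subset(h,\hbar)$.

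Finally, check Turaev's biquantization axioms. For the first-order terms, $\frac{xy-yx}{h}\bmod (h,\hbar)$ should equal the necklace bracket extended as a Poisson bracket to $S(L)$. This holds because the only non-symmetric contribution at order $h$ is the single-crossing resolution, which is precisely the Ginzburg--Bocklandt--Le Bruyn bracket. Likewise, $\frac{\Delta-\Delta^{op}}{\hbar}\bmod(h,\hbar)$ should be the Schedler cobracket extended to $S(L)$. I expect the main obstacle to be the bialgebra compatibility in the first step, specifically showing that the signs and the $h\hbar$ cross terms cancel correctly. I would attack it first on single necklaces, where cuts and resolutions are local operations on cyclic words. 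I would then extend to multisets using the fact that both sides are determined by their values on generators together with multiplicativity.
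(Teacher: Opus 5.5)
The $V_h(L_h)$ side of your plan is essentially what the paper does: normal-ordering the heights gives $\mathbf{N}(Q)_{h,\hbar}/\hbar\cong V_h(L_h)$ (Proposition~\ref{coquan}). The $\varepsilon_\hbar(L_\hbar)$ side, however, has a genuine gap.

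\textbf{No construction of the map to $\varepsilon_\hbar(L_\hbar)$.} Identifying $\mathbf{N}(Q)_{h,\hbar}/h$ with $\varepsilon_\hbar(L_\hbar)$ ``by comparing structure maps on generators and invoking freeness'' does not work. A PBW-type basis gives at most a module isomorphism. Moreover, freeness of $\mathbf{N}(Q)_{h,\hbar}$ is itself an unproved confluence statement for both skein relations, which the paper avoids. What you need is a map intertwining $\Delta$ with Turaev's $\nabla_\hbar$, and such a map cannot send a knot to its underlying necklace.

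Example~\ref{ex:Jordan} already shows this. The knot $X=(a,1)(a^*,2)$ has $\Delta(X)=X\otimes 1+1\otimes X+\hbar\, v_1\otimes v_1$. On the other hand, $\nu(aa^*)=0$ makes $aa^*$ primitive for $\nabla_\hbar$. The correct image is $aa^*+\frac{\hbar}{2}v_1v_1$.

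In general the image of a knot must record every way of cutting it into single necklaces, and constructing this map is the missing idea. The paper takes $\mathcal{J}=\bigoplus_n q^{\otimes n}\circ\Delta^{n-1}$ into Turaev's $E(L_\hbar)\subset\mathcal{F}(L_\hbar)$ and composes with $\mathrm{symm}$. It then proves that $\mathcal{J}$ is:
\begin{itemize}
\item multiplicative for the shuffle product;
\item comultiplicative for deconcatenation, via the coloring bijection of Lemma~\ref{bij};
\item surjective, by induction on $|T_X|$;
\item injective modulo $h$, via nilpotence of $\xi\circ\widetilde{p}_\hbar-\mathrm{id}$.
\end{itemize}

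\textbf{The axioms are checked at the wrong level.} A biquantization requires $p_\hbar$ to be a quantization over $k[h,\hbar]\to k[\hbar]$ of the Poisson bialgebra $\varepsilon_\hbar(L_\hbar)$. Its bracket is Turaev's deformed one, which agrees with the necklace bracket only modulo $\hbar\bigoplus_{n\geq 2}S^n$. So $h^{-1}(XY-YX)$ must match it modulo $h$ to all orders in $\hbar$. Likewise, $p_h$ must satisfy the coquantization congruence modulo $\hbar^2$ to all orders in $h$. Checking first-order terms modulo $(h,\hbar)$ only recovers the bi-Poisson structure of $S(L)$.

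The all-orders bracket identity is Proposition~\ref{quantization}, the technical core of the paper. One writes $X*Y-Y*X=h\sum\varepsilon_iN^{i,j}$. One then shows that colorings of $N^{i,j}$ cutting only self-crossings of $X$ or of $Y$ reproduce $\{\mathcal{J}(X),\mathcal{J}(Y)\}$ term by term. All other colorings cancel in alternating sums, by Turaev's count of orders on a graph with a single directed cycle (Lemma~\ref{graph}). Nothing in your plan plays this role.

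\textbf{The coproduct as described would not be well defined.} With only $\hbar$-weighted cuts at self-crossings, it would not descend through relation~\eqref{quiver skein relations 1}. For links with interleaved heights you must also cut pairs lying on different components. The general weight is $h^{(|c|/2+\|c\|)/2}\hbar^{(|c|/2-\|c\|)/2}$ (see case $C_3$ in the proof of Lemma~\ref{well}).
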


The above maps are motivated by and 
hence are analogous
to those of Turaev \cite{Turaev}
in the case of loops on Riemann surfaces;
however, it is technically more
involved (see \S\ref{quancon}
and Remark \ref{rem:why}
for more details).

In the above theorem,
if we set
$h=1$, then $q_{\hbar}\circ p_h$
coincides with the one of Schedler;
however, the construction of
the left-bottom corner
of \eqref{biquan}
is, as far as we know, new.
If we set $h=\hbar,$ then $p$ is the quantization of the Lie bialgebra $L$ in the sense of Eingof and Kazhdan \cite[\S3]{EtiKaz1}.

In the sequel to this paper,
we plan to do the following
two things: (1) comparing
our construction with
that of Kazhdan-Etingof and Kassel-Turaev;
(2) applying the above
biquantization to some
specific quivers,
such as the Jordan quiver (see Example
\ref{ex:Jordan} below for some preliminary
computations),
and studying the quantum geometry of
the associated quiver
varieties.

\subsection{Organization of the paper}
The paper is organized as follows. 
Section
\ref{biquanoflie} is a review of the necessary 
background on the necklace Lie 
bialgebra, and provides definition for the 
related quantization,  
coquantization bialgebras $V_h(L_h)$ and 
$\varepsilon_{\hbar}(L_\hbar)$. 
Next, the two-parameter algebra $\mathbf{N}
(Q)_{h,\hbar}$ is constructed in 
Sections \ref{algebrana}, 
and its coalgebra structure is given so that 
it is a bialgebra.  
In Sections \ref{quancon} and
\ref{coquancon}, we construct two morphisms 
$p_{\hbar}:\na\to 
\varepsilon_\hbar(L_\hbar)$ and $p_h:\na\to V_h(L_h)$ 
serving respectively as a 
quantization of $\varepsilon_\hbar(L_\hbar)$ and a 
coquantization of $V_h(L_h)$. In 
Section \ref{proofofmain} we synthesize the 
preceding contents and prove the above theorem.

\begin{ntt}
Throughout this paper, $k$ denotes a field of characteristic 0. 
All tensors 
are over $k$ unless otherwise specified. $Q$ denotes a finite 
quiver, $Q_0$ 
denotes its vertex set, and $Q_1$ denotes its
arrow set. For an edge $e\in Q_1$,
$e_s$ and $e_t$ denote its
source and target in $Q_0$ respectively.
For a quiver $Q$,
$\overline{Q}$ denotes its double,
which duplicates the arrows 
of $Q$ but with
an opposite direction.
The set of natural numbers 
$\mathbb N$ starts from 1.
For a set $S$, let $|S|$ denote the cardinality of 
$S$.
\end{ntt}  

\begin{ack}
We would like
to thank Farkhod Eshmatov
for his support during the preparation of the paper.
The last two authors thank New Uzbekistan University for inviting them and offering excellent studying conditions.
We are grateful to an anonymous referee for helpful and constructive comments on an earlier version of this manuscript.
This work is supported by NSFC No. 12271377 and 12261131498.
\end{ack}

\section{Lie bialgebras and their
quantizations}\label{biquanoflie}

In this section, recall some
basic concepts on quantizations.
In Section \ref{Liebialg}, we sketch the necklace 
Lie bialgebra structure of a quiver. 
In Section \ref{defofbiquant}, we introduce the 
notion of biquantization of a Lie bialgebra. 
Finally, in
Section \ref{bipoisson} we introduce
the easy part of biquantization of a Lie 
bialgebra, namely, its quantization 
and coquantization.

\subsection{Lie bialgebra and its
symmetric products}\label{Liebialg}

Recall the definition of Lie bialgebras.

\begin{definition}
A \textit{Lie coalgebra} over $k$ is a 
$k$-module $\mathfrak g$ equipped with a linear 
morphism
$\nu: \mathfrak g \to \mathfrak g\otimes 
\mathfrak g$ such that 
\begin{enumerate}[label = \rm{(}\arabic*),noitemsep,leftmargin=*,nosep]
\item (Skew-symmetry)
$\perm \circ \nu = - \nu$,\text{ and }
\item (Co-Jacobi identity) 
$(\tau^2+\tau+1)\circ(\id \otimes \nu)\circ \nu=0$,
\end{enumerate}
where $\mathrm{Perm}(a \otimes b) = b \otimes a$ and $\tau:a\otimes b\otimes c\mapsto c\otimes a\otimes b$  for all $a, b, c \in \mathfrak g.$ 
The morphism $\nu$ is 
called the
\textit{Lie cobracket} in $\mathfrak{g}$.
\end{definition}

 The Lie algebra structure, together with the Lie coalgebra structure, defines a Lie bialgebra structure if the cocycle condition is satisfied.

\begin{definition}
[{Drinfeld \cite[Section 3]{Drinfeld}}]
Suppose that  $\mathfrak g$ is a $k$-module.
Then $\mathfrak g $ is called a \textit{Lie bialgebra} over $k$ if it is provided with a Lie bracket $[-,-]$ and a Lie cobracket $\nu$ so that for any $a,b\in \mathfrak g$ they satisfy the cocycle condition,
\[\nu([a,b])=a\nu(b)+\nu(a)b,\]
where the action of $a\in \mathfrak g$ on $b\otimes c \in \mathfrak g\otimes \mathfrak g$ is given by $a(b\otimes c)=[a,b]\otimes c+b\otimes [a,c]$.
\end{definition}

\subsubsection{Example: the necklace Lie bialgebra}\label{sec:necklaceLiealgebra}

Let us recall the construction of
necklace Lie bialgebra.

Suppose that $\overline Q$ is the double quiver of a 
finite quiver $Q$. 
Let \[(k\dq)_\natural\coloneqq\frac{k\dq}{[k\dq,k\dq]}\] 
denote the commutator
quotient space of $k\dq$,
whose elements are called
the necklaces.
(In the following, for a cyclic path
$a_{1}a_{2}\cdots a_{k}$ we also 
use it to denote its equivalence class, if
the context is clear.)

\begin{theorem}[\text{Bocklandt-Le Bruyn \cite{BocLeB},
Ginzburg \cite{Ginzburg} and 
Schedler \cite{Schedler}}]\label{L}
For a finite quiver $Q$,
let $(k\dq)_\natural$ be as above.

\begin{enumerate}[label = \rm{(}\arabic*),noitemsep,leftmargin=*,nosep]
\item For two necklaces
$a_1a_2\cdots a_k, b_1b_2\cdots b_l\in(k\dq)_\natural$, 
let
\[\begin{aligned}
&\{a_{1}a_{2}\cdots a_{k},b_{1}b_{2}\cdots b_{l}\}\\
&:=
\sum_{1\leqslant i \leqslant k,\, 1\leqslant j 
\leqslant l}
\langle a_{i},b_{j}\rangle (a_i)_t
a_{i+1}a_{i+2}\cdots a_{k}a_{1}
\cdots a_{i-1} b_{j+1}\cdots b_{l}b_{1}\cdots b_{j-1},
\end{aligned}\]
where the pair $\langle-,-\rangle$ is given by 
$$
\langle a_i, b_j\rangle=-\langle b_j,a_i\rangle:=
\left\{
\begin{array}{ll}
1,&\mbox{if $a_i\in Q$ and $b_j=a_i^*$},\\
0,&\mbox{otherwise},
\end{array}
\right.
$$
and 
$a_i^*$ denotes the reversed edge corresponding to $a_i$ in the double quiver $\dq$.
Then $((k\dq)_\natural, \{-,-\})$
is a Lie algebra.

\item 
Let $\nu:(k\dq)_\natural \to (k\dq)_\natural\otimes (k\dq)_\natural$ be
\begin{align}\nu(a_1 a_2\cdots a_k) 
:= \sum_{1 \leq i <j \leq k} 
\langle a_i, a_j\rangle (a_j)_t 
a_{j+1} \cdots a_k a_1 \cdots a_{i-1} 
\wedge 
(a_i)_ta_{i+1} \cdots a_{j-1},
\label{formula:cobracket}
\end{align}
for any $a_1a_2\cdots a_k$,
where $P \wedge Q \coloneqq P \otimes Q-Q\otimes P$ and $\nu(\overline{Q}_0)=\{0\}$. 
Then $((k\dq)_\natural, \nu)$
is a Lie coalgebra.

\item With the Lie bracket and Lie cobracket, then
$((k\dq)_\natural, \{-,-\},\nu)$
is a Lie bialgebra, called
the necklace Lie bialgebra.
\end{enumerate}
\end{theorem}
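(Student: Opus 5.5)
We check the three parts by direct combinatorial computation with cyclic words, using that every bilinear expression in question is a sum over ``pairings'' of a letter $a\in Q$ with its dual $a^*$. The computations are organized around the pairing $\langle-,-\rangle$ on letters. For computations it is convenient to introduce the ``double bracket'' $\{\!\{x,y\}\!\}$ on $k\dq$ in the sense of Van den Bergh. On letters it is defined by $\{\!\{a,a^*\}\!\}=e_{a_t}\otimes e_{a_s}$ (with appropriate sign and vertex idempotents), and it is extended by the Leibniz rules. Then $\{-,-\}$ is the induced bracket $\mu\circ\{\!\{-,-\}\!\}$ on $(k\dq)_\natural$. Similarly $\nu$ is obtained from the ``self-pairing'' of a single word.

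\textbf{Step 1 (well-definedness).} First we check that the formulas for $\{-,-\}$ and $\nu$ are invariant under cyclic rotation of $a_1\cdots a_k$ (and of $b_1\cdots b_l$). For the bracket this is immediate: rotating the word reindexes the sum over $i$, and the output word $a_{i+1}\cdots a_{i-1}b_{j+1}\cdots b_{j-1}$ depends only on the position $i$ in the cyclic word. For $\nu$, rotating moves a pair $(i,j)$ with $i<j$ possibly to a pair with $j'<i'$. The roles of the two output necklaces are then swapped, and the sign of $\langle a_i,a_j\rangle$ flips. Since $\wedge$ is antisymmetric, these two sign changes cancel, so $\nu$ descends to $(k\dq)_\natural$. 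Skew-symmetry of both operations is then clear from $\langle a,b\rangle=-\langle b,a\rangle$ together with the antisymmetry of $\wedge$.

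\textbf{Step 2 (Jacobi and co-Jacobi).} For Jacobi, $\{x,\{y,z\}\}$ expands as a sum over a pairing $(p,q)$ between $y$ and $z$, followed by a pairing between $x$ and a letter of $y$ or of $z$ that survives. Cyclically summing, each term is indexed by two disjoint pairings among three necklaces. Such a term arises exactly twice, once with each order of resolution, and with opposite signs, so the sum vanishes. This is the standard Goldman/Kontsevich argument, and we invoke Ginzburg and Bocklandt--Le Bruyn for it. Co-Jacobi is the dual statement. $(\id\otimes\nu)\circ\nu$ is indexed by two non-crossing or crossing self-pairings in one word, cutting it into three necklaces. Under $\tau^2+\tau+1$, each configuration appears twice with opposite signs.

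\textbf{Step 3 (cocycle condition).} This is the main obstacle. We expand $\nu(\{x,y\})$ as a sum over a pair $(p,q)$ of a letter of $x$ with one of $y$, followed by a self-pairing $(r,s)$ in the glued word. The self-pairings split into three kinds: both letters from $x$, both from $y$, or one from each. The first two kinds reproduce $\{\nu(x),y\}$-type and $\{x,\nu(y)\}$-type terms, i.e.\ $x\nu(y)+\nu(x)y$ after checking that the cut commutes with the gluing. The mixed ones must cancel in pairs. For this we exchange the roles of $(p,q)$ and $(r,s)$: gluing at one mixed pair and cutting at the other produces the same pair of necklaces as the reverse order, but with opposite sign. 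Verifying this sign bookkeeping is delicate, since one must track which piece lands in which tensor factor. This is done carefully in Schedler's paper, which we follow.
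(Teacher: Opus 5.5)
The paper does not prove this theorem. It is quoted from Bocklandt--Le Bruyn, Ginzburg and Schedler and used only as input. Your proposal is correct. It relies on those same sources for the detailed bookkeeping, so it matches the paper's treatment, and the cancellation mechanisms you describe do work.

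The sign check you call delicate in Step 3 is actually one line. Gluing at $(x_p,y_q)$ and then cutting at the mixed chord $(x_r,y_s)$ gives $\epsilon\,O\wedge I$, where $\epsilon=\langle x_p,y_q\rangle\langle x_r,y_s\rangle$, $O=y_{s+1}\cdots y_{q-1}x_{p+1}\cdots x_{r-1}$ and $I=x_{r+1}\cdots x_{p-1}y_{q+1}\cdots y_{s-1}$. The reverse order gives $\epsilon\,I\wedge O$, up to rotation of the necklaces, so the mixed terms cancel. The unmixed terms give $x\nu(y)$ and $\nu(x)y$. Here $\nu(x)y$ is read as the right action $(b\otimes c)y=\{b,y\}\otimes c+b\otimes\{c,y\}$.

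One correction to Step 2: only \emph{non-crossing} pairs of self-pairings occur in $(\id\otimes\nu)\circ\nu$. After cutting at one chord, the two letters of a chord that crosses it lie in different tensor factors, so they cannot be cut by the second $\nu$. Suppose two non-crossing chords split the necklace into pieces $A$ and $C$, each adjacent to one chord, and $B$, adjacent to both. Then the two cutting orders contribute, up to an overall sign, $A\otimes B\otimes C-A\otimes C\otimes B-C\otimes A\otimes B+C\otimes B\otimes A$. The operator $\tau^2+\tau+1$ kills this, because $A\otimes B\otimes C$ and $C\otimes A\otimes B$ lie in one $\tau$-orbit, and $A\otimes C\otimes B$ and $C\otimes B\otimes A$ lie in the other.

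The Van den Bergh double-bracket paragraph is never used in your argument and can be dropped.
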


Thereafter, we use
$L$ to denote the necklace Lie bialgebra $(k\dq)_\natural.$

\subsubsection{The Poisson and co-Poisson structures}

Lie bialgebras naturally arise from Poisson-Lie groups
(see \cite{Drinfeld}).
We next describe some algebraic structures
on the symmetric algebra of a Lie bialgebra.

\begin{definition}\begin{enumerate}[label = \rm{(}\arabic*),noitemsep,leftmargin=*,nosep]
    \item[(1)]A \textit{Poisson algebra} is a commutative associative algebra equipped with a Lie bracket satisfying the \textit{Leibniz rule}.  
     \item[(2)]A \textit{co-Poisson coalgebra} $A$ over a field $k$ is a cocommutative coassociative coalgebra equipped with a Lie cobracket $\nu : A \to A \otimes A$ satisfying the following 
\textit{co-Leibniz rule},
\[
(\mathrm{id} \otimes \Delta) \circ \nu = 
(\nu \otimes \mathrm{id} + (\mathrm{Perm} \otimes \mathrm{id}) \circ 
(\mathrm{id} \otimes \nu)) \circ \Delta,
\]
where $\Delta$ denotes the comultiplication of $A$. 
\end{enumerate}
\end{definition}

\begin{definition}
\begin{enumerate}[label = \rm{(}\arabic*),noitemsep,leftmargin=*,nosep]
\item[(1)]  A \textit{bialgebra} over a field $k$ is an
associative algebra and a coassociative coalgebra over $k$ where the comultiplication $\Delta$ is an algebra morphism.
\item[(2)]A \textit{Poisson bialgebra} $A$ over $k$ is a $k$-module equipped with the structure of bialgebra and Poisson algebra with the same commutative multiplication and the comultiplication preserving the Lie bracket:\[
\Delta([a,b])=[\Delta(a),\Delta(b)]
\]for all $a,b\in A$.
\item[(3)]A \textit{co-Poisson bialgebra} $A$ is a co-Poisson coalgebra with a bialgebra structure where the same cocommutative comultiplication $\Delta$ and the Lie cobracket $\nu$ satisfy \begin{equation}\label{copobialge}
    \nu(ab)=\nu(a)\Delta(b)+\Delta(a)\nu(b)
    \end{equation}for all $a,b\in A$. 
\item[(4)]A  \textit{bi-Poisson bialgebra} $S$ over $k$ is a $k$-module equipped with the structure of the Poisson bialgebra and of the co-Poisson bialgebra with the same bialgebra structure satisfying\[
\nu([a,b])=[\Delta(a),\nu(b)]+[\nu(a),\Delta(b)]
\]for all $a,b\in S$.
\end{enumerate}

\end{definition}

\begin{example}[{Turaev \cite[Theorem 7.4 and Section 17.1]{Turaev}}]\label{3.1}
Let $\mathfrak g$ be a Lie bialgebra and $S(\mathfrak{g})$ be the symmetric algebra of $\mathfrak g$.
Put $V_h(\mathfrak g_h)\coloneqq T(k[h]\otimes \mathfrak g)/(ab-ba-h[a,b])$.
There is a bi-Poisson bialgebra structure on $S(\mathfrak{g})$ and a co-Poisson bialgebra structure on $V_h(\mathfrak g_h)$:   
\begin{enumerate}
    \item [$-$] The bracket on $S(\mathfrak{g})$ is the extension of the Lie bracket on $\mathfrak{g}$ following the Leibniz rule.
    \item [$-$] The map $\Delta$ for both $V_h(\mathfrak g_h)$ and $S(\mathfrak g)$ is given by extending 
    \[\Delta:a\mapsto a\otimes 1+1\otimes a ,a\in \mathfrak g\] 
    following the multiplicativity of $\Delta$.
    \item [$-$] With the cobracket $\nu_\mathfrak{g}$ on $\mathfrak g$, the cobracket on $V_h(\mathfrak g_h)$ is the map $\nu$ extending $\nu(x)\coloneqq \nu_\mathfrak{g}(x)$ for $x\in\mathfrak g$ following formula (\ref{copobialge}). 
    With $h = 0$, this map descends to $S(\mathfrak g)$.
\end{enumerate}

\end{example}

\subsection{Biquantization of a Lie bialgebra}\label{defofbiquant}

We now discuss the quantization problem
of the various Poisson structures in
the previous subsection.
A good reference is Turaev's paper
\cite{Turaev}.

Let $R$ be a commutative associative $k$-algebra with augmentation $\varphi:R\to k$. 
Given a $R$-module $A$ and a $k$-module $S$, an additive map $p:A\to S$ is $\varphi$-\textit{linear} if $p(ra)=\varphi(r)p(a)$ for any $r\in R$ and $a\in A$.

\begin{definition}[Quantization of a Poisson algebra]\label{2.7}
With the above $R$ and $\varphi$, fix a formal parameter $h\in \ker \varphi$. 
Then a \textit{quantization} over $(R,\varphi,h)$ of a Poisson $k$-algebra $S$ is a pair 
\[\text{(a $R$-algebra $A$, a $\varphi$-linear ring epimorphism $p:A\to S$)}\] satisfying \begin{align}
ab-ba\equiv hp^{-1}([p(a),p(b)])\mod  h\ker p, \textup{ for any } a,b\in A
\label{formula:quantization}\end{align}Here, by the notation $p^{-1}$, we mean to choose a lift under $p$, and the resulting element is independent of the chosen lift modulo $\ker p$.
For brevity, the pair $(A,p)$ is also called a quantization over $R$ or over $\varphi$. 
If $\ker p=h\cdot A$, then this quantization is said to be \textit{reduced}.
\end{definition}

\begin{definition}[Coquantization of a co-Poisson coalgebra]\label{def:coquan}
Fix a formal parameter $\hbar\in \ker \varphi$. 
A \textit{coquantization} over $(R, \varphi,\hbar)$ of a co-Poisson $k$-coalgebra $S$ is a pair 
\[\text{(a $R$-coalgebra $A$, a  $\varphi$-linear coalgebra epimorphism $p:A\to S$)}\] satisfying
    \[\Delta(a) - \mathrm{Perm}_{A}(\Delta(a)) \equiv \hbar(p\otimes p)^{-1}(\nu\circ p)(a) \mod (\hbar\,\ker (p\otimes p)).\] 
     If $\ker p=\ker\varphi \cdot A$, then the coquantization $(A,p)$ is said to be \textit{reduced}. Here, the notation $(p\otimes p)^{-1}$ is interpreted in a similar way to the above $p^{-1}$.
\end{definition}

There is a canonical way to produce a bi-Poisson bialgebra from a bialgebra:

\begin{theorem}
[{Turaev \cite[Theorem 16.2.4]{Turaev}}]

Let $A$ be a bialgebra over $k[h,\hbar]$ that is free as a $k[h,\hbar]$-module. 
For any $a,b\in A$, suppose that \[ab-ba\in hA\text{ and }\Delta(a)-\perm(\Delta(a))\in \hbar A^{\otimes 2}.\]
Put $S\coloneqq A/(h,\hbar)A$. 
Then $S$ is a bi-Poisson bialgebra over $k$ with Lie bracket $[-,-]$ and Lie cobracket $\nu$ 
given as follows:
\[\begin{array}{rcl}
[-,-]: & S\otimes S\to S; & [p(a),p(b)]=p(h^{-1}(ab-ba)), \\
\nu: & S\to S\otimes S; & \nu(p(a))=(p\otimes p)(\hbar^{-1}(\Delta(a)-\perm(\Delta(a)))).
\end{array}\]
\end{theorem}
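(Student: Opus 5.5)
The plan is to verify the structures on $S=A/(h,\hbar)A$ one at a time. Throughout, $p:A\to S$ is the projection and $\Delta_S=(p\otimes p)\circ\Delta\circ p^{-1}$ is the induced comultiplication.

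\textbf{Well-definedness and the bialgebra structure.} Since $A$ is free over $k[h,\hbar]$, multiplication by $h$ is injective on $A$, and multiplication by $\hbar$ is injective on $A^{\otimes 2}$. Hence $h^{-1}(ab-ba)$ and $\hbar^{-1}(\Delta(a)-\perm\Delta(a))$ are uniquely defined.

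\emph{Independence of lifts for the bracket.} Replace $a$ by $a+hx+\hbar y$. The commutator changes by $h[x,b]+\hbar[y,b]$. By hypothesis $[y,b]=hz$ for some $z$, so after dividing by $h$ the change is $[x,b]+\hbar z$, which lies in $(h,\hbar)A$.

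\emph{Independence of lifts for the cobracket.} This is the same argument. The change in $\Delta(a)-\perm\Delta(a)$ is $h(\Delta x-\perm\Delta x)+\hbar(\ldots)$, and $\Delta x-\perm\Delta x\in\hbar A^{\otimes2}$. So after dividing by $\hbar$ the change lies in $(h,\hbar)A^{\otimes 2}$.

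\emph{Bialgebra structure on $S$.} $S$ inherits a bialgebra structure over $k$. It is commutative because $ab-ba\in hA$, and cocommutative because $\Delta-\perm\Delta\in\hbar A^{\otimes 2}$.

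\textbf{Poisson and co-Poisson axioms.}
\begin{itemize}
\item Antisymmetry of $[-,-]$ and of $\nu$ are immediate from the formulas.
\item The Jacobi identity follows by dividing the associative identity $[a,[b,c]]+\mathrm{cyc}=0$ in $A$ by $h^2$; this is legitimate because $A$ is torsion-free.
\item The Leibniz rule follows from $[a,bc]=[a,b]c+b[a,c]$.
\item $\Delta_S$ preserves the bracket because $\Delta$ is multiplicative: $\Delta(ab-ba)=[\Delta a,\Delta b]$. Divide by $h$ and reduce.
\item For $\nu$, write $\delta(a)=\Delta(a)-\perm\Delta(a)$. Since $\Delta$ and $\perm\circ\Delta$ are both algebra maps,
\[\delta(ab)=\delta(a)\Delta(b)+\perm\Delta(a)\,\delta(b).\]
Dividing by $\hbar$ and reducing, $\perm\Delta$ becomes $\Delta_S$ by cocommutativity, so we obtain \eqref{copobialge}.
\item The co-Leibniz rule comes from coassociativity. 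Consider $(\mathrm{id}\otimes\Delta)\delta$. Rewrite $\perm_{1,23}$ as a composite of two adjacent transpositions $\perm_{12}\perm_{23}$, which splits it into $(\delta\otimes\mathrm{id})\Delta$ plus $(\perm\otimes\mathrm{id})(\mathrm{id}\otimes\delta)\Delta$. Each term is divisible by $\hbar$.
\end{itemize}

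\textbf{Co-Jacobi identity (the main obstacle).} A naive expansion of $(1+\tau+\tau^2)(\mathrm{id}\otimes\delta)\delta$ involves $\hbar^2$-divisibility of combinations of the permuted iterated coproducts $\sigma\cdot\Delta^{(2)}(a)$, $\sigma\in S_3$. Write $\Delta^{(2)}=(\mathrm{id}\otimes\Delta)\Delta$ and group the terms as an alternating sum over $S_3$ acting on $\Delta^{(2)}(a)$. The key point is that this alternating sum vanishes identically: the terms cancel pairwise once signs are tracked, which is the coalgebra dual of the Jacobi identity for commutators. Dividing by $\hbar^2$, which is legitimate by freeness of $A^{\otimes3}$, and reducing gives co-Jacobi for $\nu$. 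I expect the sign bookkeeping here to be the fiddliest part.

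\textbf{Bi-Poisson compatibility.} Compute
\[\delta(ab-ba)=\delta(a)\Delta(b)+\perm\Delta(a)\,\delta(b)-\delta(b)\Delta(a)-\perm\Delta(b)\,\delta(a).\]
Regroup this as
\[[\delta(a),\Delta(b)]+[\Delta(a),\delta(b)]+\bigl(\perm\Delta(a)-\Delta(a)\bigr)\delta(b)-\bigl(\perm\Delta(b)-\Delta(b)\bigr)\delta(a).\]
The last two terms are divisible by $\hbar^2$. The commutators $[\delta(a),\Delta(b)]$ and $[\Delta(a),\delta(b)]$ are divisible by $h\hbar$, since commutators in $A^{\otimes2}$ lie in $hA^{\otimes 2}$. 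Dividing the whole identity by $h\hbar$ and reducing, the two leftover terms become $\hbar h^{-1}\cdot(\ldots)$, and they vanish modulo $(h,\hbar)$ provided they are divisible by $h$ as well. The justification I would try first is that $\Delta(a)-\perm\Delta(a)\in\hbar A^{\otimes2}$, and its product with $\delta(b)$ involves commutator-type terms that lie in $h$. If that fails, I would instead check the identity directly on the reduction, using that $\nu$ is a derivation along $\Delta_S$ and that the bracket is a biderivation. This reduces the compatibility to generators, where it follows by the same regrouping.
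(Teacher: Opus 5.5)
\textbf{Verdict: there is a gap, in the final bi-Poisson compatibility step.} The paper does not prove this statement; it only cites Turaev [Theorem 16.2.4], so your argument is judged on its own. The following steps are fine:
\begin{itemize}
\item well-definedness of the bracket and the cobracket;
\item the bialgebra structure on $S$, with commutativity and cocommutativity;
\item Jacobi, Leibniz, and $\Delta_S$ preserving the bracket;
\item the rule $\nu(ab)=\nu(a)\Delta(b)+\Delta(a)\nu(b)$;
\item co-Leibniz, via $1-\perm_{12}\perm_{23}=(1-\perm_{12})+\perm_{12}(1-\perm_{23})$.
\end{itemize}
The compatibility step is left unresolved, and the route you propose for closing it would not work.

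\textbf{The gap.} After your regrouping, the leftover is
\[(\perm\Delta(a)-\Delta(a))\,\delta(b)-(\perm\Delta(b)-\Delta(b))\,\delta(a)=\delta(b)\delta(a)-\delta(a)\delta(b).\]
You try to show that each product is divisible by $h$. That is false in general. For example, if $A$ is commutative, so that the hypothesis $ab-ba\in hA$ is vacuous, nothing forces $\delta(a)\delta(b)\in hA^{\otimes 2}$. Your fallback of checking on generators is not a real alternative either: $S$ is an arbitrary quotient with no distinguished generating set, and any derivation-style reduction would still need this identity in $A$.

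\textbf{The fix.} The two leftover terms combine into a single commutator, giving the exact identity
\[\delta(ab-ba)=[\delta(a),\Delta(b)]+[\Delta(a),\delta(b)]-[\delta(a),\delta(b)].\]
Commutators in $A^{\otimes 2}$ lie in $hA^{\otimes 2}$, because
\[[x\otimes y,x'\otimes y']=[x,x']\otimes yy'+x'x\otimes[y,y'].\]
Since also $\delta(a),\delta(b)\in\hbar A^{\otimes 2}$, the last term lies in $h\hbar^2A^{\otimes 2}$. After dividing by $h\hbar$ it lies in $\hbar A^{\otimes 2}$, so it vanishes in $S\otimes S$. The first two terms then give $[\nu(\bar a),\Delta_S(\bar b)]+[\Delta_S(\bar a),\nu(\bar b)]$, where the bracket on $S\otimes S$ is the one induced by the formula just displayed.

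\textbf{A smaller imprecision in co-Jacobi.} The combination that vanishes is not the antisymmetrizer $\sum_{\sigma\in S_3}\mathrm{sgn}(\sigma)\,\sigma\cdot\Delta^{(2)}(a)$, which is nonzero in general. Write $T_{ijk}=a_{(i)}\otimes a_{(j)}\otimes a_{(k)}$ in Sweedler notation. Coassociativity gives
\[(\id\otimes\delta)\delta(a)=T_{123}-T_{132}-T_{312}+T_{321}.\]
Since $\tau T_{ijk}=T_{kij}$, each of the six $T_{ijk}$ occurs twice with opposite signs in $(1+\tau+\tau^2)(\id\otimes\delta)\delta(a)$. So this expression is exactly $0$ in $A^{\otimes 3}$, and no $\hbar^2$-divisibility question arises.

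With these two repairs the argument is complete.
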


\begin{definition}[Biquantization of bi-Poisson 
bialgebra]
Let $S$ be a bi-Poisson bialgebra over $k$. 
A \textit{biquantization} of $S$ over $k[h,\hbar]$ 
is a commutative diagram of a surjective bialgebra 
morphism,
\[\begin{tikzcd}
	A & {A_h} \\
	{A_\hbar} & S
	\arrow["{p_\hbar}", from=1-1, to=1-2]
	\arrow["{p_h}"', from=1-1, to=2-1]
	\arrow["p", from=1-1, to=2-2]
	\arrow["{q_h}", from=1-2, to=2-2]
	\arrow["{q_\hbar}"', from=2-1, to=2-2]
\end{tikzcd}\]
satisfying the following two conditions:
\begin{enumerate}[label = \rm{(}\arabic*),noitemsep,leftmargin=*,nosep]
    \item[(1)] $p_h,q_h$ and $p_\hbar,q_\hbar$ are respectively quantizations and coquantizations over diagram 
\[\begin{tikzcd}
	{k[h,\hbar]} & {k[h]} \\
	{k[\hbar]} & k.
	\arrow["{\hbar\,\mapsto \,0}", from=1-1, to=1-2]
	\arrow["{h\,\mapsto \,0}"', from=1-1, to=2-1]
	\arrow[from=1-1, to=2-2]
	\arrow["{h\,\mapsto\, 0}", from=1-2, to=2-2]
	\arrow["{\hbar\,\mapsto\, 0}"', from=2-1, to=2-2]
\end{tikzcd}\]

\item[(2)] $q_h$ is a morphism of co-Poisson bialgebras and $q_\hbar$ is a morphism of Poisson bialgebras.
\end{enumerate}

A biquantization is called \textit{reduced} 
if $\ker p_h=hA,$ $\ker p_\hbar=\hbar A$, and $\ker p=hA+\hbar A $. 
\end{definition}

For a Lie bialgebra $\mathfrak{g}$, 
Example~\ref{3.1} indicates that the symmetric 
algebra $S(\mathfrak{g})$ is a bi-Poisson 
bialgebra. Hence, we have the following.

\begin{definition}[{Turaev \cite[Section 17.1]{Turaev}}]
\label{biofg}
Let $\mathfrak{g}$ be a Lie bialgebra. 
A \textit{biquantization} of $\mathfrak{g}$ is a reduced 
biquantization of $S(\mathfrak{g})$.
\end{definition}

\subsection{Quantization and coquantization for $S(L)$}\label{bipoisson}
 
This subsection is devoted to constructing the quantization \( V_h(L_h) \) and the coquantization $\varepsilon_{\hbar}(L_\hbar)$ of $S(L)$. We
first have the following.

\begin{theorem}[{Turaev \cite[Theorem 1.4]{Turaev}}]\label{qhbar}With the notations in Example~\text{\rm{\ref{3.1}}}.
Let $q_\hbar$ be the map given by $h\mapsto 0$.
Then $V_h(\mathfrak g_h)$ is a reduced quantization of $S(\mathfrak g)$ given by $q_\hbar$.
\end{theorem}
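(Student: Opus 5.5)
The plan is to verify directly the conditions of Definition~\ref{2.7} for the pair $(V_h(\mathfrak g_h), q_\hbar)$ over $(k[h],\, h\mapsto 0,\, h)$, together with reducedness $\ker q_\hbar = hV_h(\mathfrak g_h)$.

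First I will set up the map. The algebra $V_h(\mathfrak g_h)$ is the quotient of the tensor algebra $T(k[h]\otimes\mathfrak g)$ by the two-sided ideal generated by $ab-ba-h[a,b]$ for $a,b\in\mathfrak g$. Setting $h=0$ sends these generators to $ab-ba$, which generate the commutativity ideal. So $V_h(\mathfrak g_h)/hV_h(\mathfrak g_h)\cong T(\mathfrak g)/(ab-ba)=S(\mathfrak g)$, and $q_\hbar$ is the composite $V_h\to V_h/hV_h\cong S(\mathfrak g)$. This map is $\varphi$-linear and a ring epimorphism, since $S(\mathfrak g)$ is generated by $\mathfrak g$, which lies in the image. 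Its kernel is exactly $hV_h(\mathfrak g_h)$ by construction, so the quantization is reduced.

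Next comes the commutator condition \eqref{formula:quantization}. Define $D(a,b) := ab-ba-h\,q_\hbar^{-1}([q_\hbar a, q_\hbar b])$. Because $\ker q_\hbar = hV_h$, the lift is determined modulo $hV_h$, so $D(a,b)$ is well defined modulo $h^2V_h = h\ker q_\hbar$. The condition is that $D(a,b)\in h\ker q_\hbar$.

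Both sides are biderivation-like in $(a,b)$ modulo $h^2$:
\begin{itemize}
\item The commutator satisfies $[a,bc]=[a,b]c+b[a,c]$.
\item The Poisson bracket on $S(\mathfrak g)$ obeys the Leibniz rule.
\item Lifts of products can be taken as products of lifts.
\end{itemize}
Hence the set of $b$ with $D(a,b)\in h^2V_h$ for fixed $a$ is closed under products and $k[h]$-linear combinations. It contains $1$, and the same holds in the variable $a$. It therefore suffices to check generators $a,b\in\mathfrak g$. There $ab-ba=h[a,b]$ holds exactly, and $[a,b]\in\mathfrak g$ lifts $[q_\hbar a,q_\hbar b]$, so $D=0$.

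One point needs care: closure under products requires $D(a,b)c$ and $bD(a,c)$ to stay in $h^2V_h$. This holds because $h^2V_h$ is a two-sided ideal. The other point is that $q_\hbar^{-1}([x,yz])$ may be taken as $q_\hbar^{-1}[x,y]\cdot\tilde z+\tilde y\cdot q_\hbar^{-1}[x,z]$ up to $hV_h$, which is harmless after multiplying by $h$.

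The main obstacle is the statement of reducedness in the stronger sense often intended, namely that $V_h(\mathfrak g_h)$ is a free (or torsion-free) $k[h]$-module, so that dividing by $h$ makes sense. For this I would invoke a PBW-type argument. For an ordered basis of $\mathfrak g$, the ordered monomials span $V_h$ over $k[h]$ by straightening using the relation. They are independent because the map $V_h\to U(\mathfrak g)[h]$ after rescaling, or the diamond lemma, reduces independence to the Jacobi identity, exactly as in the classical PBW theorem for the Rees algebra of $U(\mathfrak g)$. Indeed $V_h(\mathfrak g_h)$ is the Rees algebra of $U(\mathfrak g)$ with respect to its PBW filtration, which is $k[h]$-free with $V_h/hV_h\cong \mathrm{gr}\,U(\mathfrak g)\cong S(\mathfrak g)$.
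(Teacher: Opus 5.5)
Your proposal is correct. The paper gives no argument for this statement; it simply imports it from Turaev. Your verification is therefore a self-contained substitute rather than a variant of something in the text, and its structure is sound. Since $q_\hbar$ is reduction modulo $h$ followed by the identification $V_h(\mathfrak g_h)/hV_h(\mathfrak g_h)\cong T(\mathfrak g)/(ab-ba)=S(\mathfrak g)$, reducedness is tautological. The commutator condition holds exactly on generators $a,b\in\mathfrak g$. It then propagates to all of $V_h(\mathfrak g_h)$ because $D(a,bc)\equiv D(a,b)c+bD(a,c)$ and $D(ab,c)\equiv aD(b,c)+D(a,c)b$ modulo the two-sided ideal $h^2V_h(\mathfrak g_h)=h\ker q_\hbar$. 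The $k[h]$-linearity is handled by $r-\varphi(r)\in hk[h]$. Run the two variables in sequence: first all $b$ with $a\in\mathfrak g$ fixed, then all $a$.

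The one thing to correct is the framing of your last paragraph: $k[h]$-freeness of $V_h(\mathfrak g_h)$ is not an obstacle for this statement at all. Definition~\ref{2.7} phrases the commutator condition modulo $h\ker p$ precisely so that no division by $h$ is needed. There, ``reduced'' means only $\ker p=hA$, so your first two steps already prove the theorem with no PBW input. The Rees-algebra identification is true and buys extra information. It embeds $V_h(\mathfrak g_h)$ into $U(\mathfrak g)[h]$ via $a\mapsto ha$, which is injective by PBW over the field $k$. This shows $V_h(\mathfrak g_h)$ is a flat deformation of $S(\mathfrak g)$, so $q_\hbar\bigl(h^{-1}(ab-ba)\bigr)=[q_\hbar(a),q_\hbar(b)]$ makes literal sense, as in the setting of Turaev's Theorem~16.2.4. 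It is optional here.
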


\noindent Apply this theorem with $\mathfrak{g}_h = L_h:=k[h]\otimes L $. The obtained quantization $V_h(L_h)\to S(L)$ is the $q_\hbar$ in Diagram (\ref{biquan}).
We next turn to the coquantization. 

\begin{definition}
[{Spiral Lie coalgebra, Turaev \cite[Section 11.1]{Turaev}}]

Let $\mathfrak g$ be a Lie coalgebra over $k$ with cobracket $\nu$. Set inductively $\nu^1:=\nu$ and \[
\nu^n:=(\id^{n-1}\otimes \nu)\circ \nu^{n-1}:\mathfrak g\to \mathfrak g^{\otimes (n+1)}.
\] 
Then $\mathfrak g$ is \textit{spiral} if $\mathfrak g=\bigcup_{n\geq1}\ker \nu^n$. 
A Lie bialgebra is \textit{spiral} if it is spiral as a Lie coalgebra.
\end{definition}

The spiralness of a Lie coalgebra is equivalent to the nilpotency of its dual Lie algebra.

\begin{example}\label{epsilon}
The necklace Lie bialgebra $L$ is spiral.
Indeed, if we 
write $\nu(\alpha)=\sum \alpha_1\otimes 
\alpha_2$ for $\alpha
\in L$, then the 
sum of the lengths of 
$\alpha_1$ and $\alpha_2$ is always smaller than 
the length of 
$\alpha$ by (\ref{formula:cobracket}). 
Hence for any $\alpha\in L$, we have 
$\nu^N(\alpha)=0$ for 
$N$ big enough. 
\end{example}

Let $(R,\varphi,\hbar)$ be a triple as in 
Definition~\ref{def:coquan}.
Given a spiral Lie coalgebra $\mathfrak g$ over 
$(R,\varphi,\hbar)$, following Turaev \cite[Section 11.2]{Turaev}, 
the modified Campbell-Hausdorff series of $\log(e^xe^y)$ for any 
elements $x,y\in \mathfrak g^*$ writes 
\[
\mu_{\hbar}(x,y)=x+y+
\frac{\hbar}{2}[x,y]+\frac{\hbar^2}{12}([x,[x,y]]
+[y,[y,x]])+\cdots.\] 
This defines 
$\mathfrak g^*\times \mathfrak g^*\to \mathfrak g^*; 
(x,y)\mapsto \mu_{\hbar}(x,y).$
Note that one may identify $S(\mathfrak{g})$ 
(resp. $S(\mathfrak g)\otimes S(\mathfrak g)$) 
with the set of polynomial functions on 
$\mathfrak{g}^{*}$ 
(resp. $\mathfrak g^*\times\mathfrak g^*$).
The precomposition 
$a\mapsto a\circ \mu_{\hbar}$ 
defines a map $\nabla_{\hbar}: 
S(\mathfrak g)\to S(\mathfrak g)
\otimes S(\mathfrak g),$ which 
turns out to be a coproduct. 
This coproduct has the following explicit expression:
\[\nabla_\hbar(a)
\equiv
a\otimes1+1\otimes a+\frac{\hbar}{2}\nu (a)\mod 
\bigoplus_{i,j\geq1,i+j\geq 3}\hbar^2(S^i(\mathfrak g)\otimes S^j(\mathfrak g)).\]

\begin{definition}\label{def:epsilon(g)}
Let $\mathfrak{g}$ be a spiral Lie coalgebra.
Denote by $\varepsilon_\hbar(\mathfrak g)$ 
the bialgebra 
$S(\mathfrak g)$ with the coproduct $\nabla_{\hbar}$.   
\end{definition}

\begin{theorem}
[{Turaev \cite[Theorem 11.4]{Turaev}}]
For $\mathfrak{g}$ as above, the Lie bracket $[-,-]$ on 
$\mathfrak{g}$ uniquely extends to a Lie bracket 
$\{-,-\}$ on $\varepsilon_\hbar(\mathfrak g)$ so that 
$\varepsilon_\hbar(\mathfrak g)$ is a Poisson 
bialgebra, and for all $a,b\in \mathfrak g\subset 
\varepsilon_\hbar(\mathfrak g)$: 
\[   \{a,b\}\equiv
[ a,b]\mod\hbar\bigoplus_{n\geq 2}S^n(\mathfrak 
g).\]
\end{theorem}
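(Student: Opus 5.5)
The statement to prove is Turaev's Theorem 11.4, as quoted just above: for a spiral Lie bialgebra $\mathfrak g$, the bracket of $\mathfrak g$ extends uniquely to a Poisson bracket on $\varepsilon_\hbar(\mathfrak g)$ making it a Poisson bialgebra. I would prove it by dualizing: work on $\mathfrak g^*$ with its formal group law $\mu_\hbar$, and identify the required Poisson bracket with a Poisson--Lie structure on that formal group.

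\textbf{Setting up the dual group.} Since $\mathfrak g$ is spiral, every bracket monomial in the series $\mu_\hbar(x,y)$ pairs to zero against a fixed element of $\mathfrak g$ once its length is large. Hence $\nabla_\hbar$ is well defined on $S(\mathfrak g)$ by finite sums. Associativity of the Campbell--Hausdorff series then gives a commutative Hopf algebra $\varepsilon_\hbar(\mathfrak g)$, which we view as the algebra of functions on a (pro-unipotent) group $G_\hbar$ with underlying set $\mathfrak g^*$. The Lie bracket of $\mathfrak g$ is dual to a Lie cobracket on $\mathfrak g^*$. The cocycle condition of the Lie bialgebra says exactly that this cobracket is a $1$-cocycle of the Lie algebra $\mathfrak g^*$ with coefficients in $\wedge^2\mathfrak g^*$.

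\textbf{Constructing the bracket.} On the Lie algebra of $G_\hbar$ (the Lie algebra $\mathfrak g^*$ with bracket rescaled by $\hbar$), a $1$-cocycle $\delta$ integrates to a group $1$-cocycle $\pi\colon G_\hbar\to\wedge^2\mathfrak g^*$. This is Drinfeld's correspondence between Lie bialgebras and Poisson--Lie groups. I would do the integration formally, as follows.

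Define $\pi$ by right translation of $\delta$, imposing the multiplicativity
\[
\pi(xy)=\pi(x)+\mathrm{Ad}_x\pi(y).
\]
Then set $\{a,b\}(x)=\langle \pi(x), da_x\wedge db_x\rangle$, where the differentials are right-invariantly trivialized.

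Equivalently, and more algebraically, I would define $\{a,b\}$ for generators $a,b\in\mathfrak g$ as the unique element satisfying the compatibility
\[
\nabla_\hbar\{a,b\}=\{\nabla_\hbar a,\nabla_\hbar b\},
\]
solving recursively by polynomial degree. The degree-one part is forced to be $[a,b]$. Higher corrections are determined by the coproduct, and they carry powers of $\hbar$ because $\nabla_\hbar$ deviates from the primitive coproduct only at order $\hbar$. The Leibniz rule then extends the bracket uniquely to all of $S(\mathfrak g)$. This already gives uniqueness, since a biderivation satisfying the compatibility is determined by its values on $\mathfrak g$.

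\textbf{Verifying the axioms.} The Jacobi identity corresponds to $\pi$ being Poisson, i.e.\ $[\pi,\pi]=0$. Multiplicativity together with Jacobi at the identity (which is the Jacobi identity in $\mathfrak g$) propagates to all of $G_\hbar$. The estimate $\{a,b\}\equiv[a,b]$ modulo $\hbar\bigoplus_{n\ge2}S^n(\mathfrak g)$ comes from expanding $\pi$ in $x$: the linear part is $\delta$, and each further order carries a factor $\hbar$ from the rescaled bracket.

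\textbf{Main obstacle.} The hardest step is proving, in this purely formal and infinite-dimensional setting, that the recursively defined bracket exists (is consistent) and satisfies Jacobi. I would attempt this by working degree by degree with the spiral filtration, reducing each identity to a finite-dimensional quotient $\mathfrak g/\ker\nu^n$-type truncation where Drinfeld's theorem applies, and then passing to the limit.
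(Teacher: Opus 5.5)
Your dual picture is the right heuristic: the statement is a formal, infinite-dimensional version of Drinfeld's correspondence between Lie bialgebras and Poisson--Lie groups. But the proposal never proves the two facts that make up the theorem. These are that the bracket exists (as a polynomial bracket compatible with $\nabla_\hbar$) and that it satisfies Jacobi. Both are handed to ``Drinfeld's correspondence'', which is a theorem about finite-dimensional (or formal finite-dimensional) groups.

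In your setting several things break. $\mathfrak g^*$ is an infinite-dimensional pro-nilpotent Lie algebra. The transpose of $[-,-]$ lands only in $(\wedge^2\mathfrak g)^*$, not in $\wedge^2\mathfrak g^*$. The principle ``a multiplicative bivector whose linearization satisfies co-Jacobi is Poisson'' has to be established in this formal setting, and that is essentially Theorem 11.4 itself.

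The reduction you propose to close this does not work. The subspaces $\ker\nu^n$ are in general not Lie ideals. Already for $n=1$ and $b\in\ker\nu$, the cocycle condition gives $\nu([a,b])=\nu(a)b$, which need not vanish. So $\mathfrak g/\ker\nu^n$ inherits no Lie bracket, let alone a Lie bialgebra structure. Nor are these spaces finite-dimensional: for a quiver with a loop $a$, every power $a^m$ lies in $\ker\nu$. So there is no finite-dimensional Lie bialgebra to which Drinfeld's theorem could be applied before ``passing to the limit''.

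The algebraic recursion has the same hole.
\begin{itemize}
\item The compatibility $\nabla_\hbar\{a,b\}=\{\nabla_\hbar a,\nabla_\hbar b\}$ does not force the linear part to be $[a,b]$, since the zero bracket is compatible. That linear part is a normalization you impose.
\item Given the lower-order data, compatibility only prescribes $\nabla_\hbar X-X\otimes 1-1\otimes X=Z$. This is solvable only if $Z$ is a coboundary in the cobar complex of $(S(\mathfrak g),\nabla_\hbar)$, whose cohomology need not vanish. Showing that $Z$ is a coboundary is exactly the existence statement.
\item Uniqueness is fine once you note that $\nabla_\hbar$ has no nonzero primitive elements in $\hbar\bigoplus_{n\ge 2}S^n(\mathfrak g)$. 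The remark that a biderivation is determined by its values on $\mathfrak g$ does not by itself give it.
\end{itemize}

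The paper does not prove this statement; it cites Turaev. The machinery it imports immediately afterwards shows how this is done with no dualization or integration. In the tensor bialgebra $\mathcal F(\mathfrak g)$, with shuffle product and deconcatenation coproduct, the explicit bracket of Lemma~\ref{Lie} is a Poisson bracket by direct verification. $E(\mathfrak g)$ is a Poisson sub-bialgebra, and $\symm$ identifies $E(\mathfrak g)$ with $\varepsilon_\hbar(\mathfrak g)$ as bialgebras (Theorem~\ref{symmisomorphism}, Corollary~\ref{E}). Jacobi and compatibility with $\nabla_\hbar$ are then simply inherited.

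To keep your approach, you would have to carry out the formal Poisson--Lie argument directly in completed tensor products. That means giving an explicit integrated cocycle $\pi$, proving multiplicativity of the Schouten square, and showing that a multiplicative trivector with vanishing linearization vanishes. This is a substantial piece of work that the proposal does not supply.
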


For $L$ the necklace Lie algebra, 
put $L_\hbar\coloneqq k[\hbar]\otimes L $ and
\[[\alpha\otimes f(\hbar),\beta\otimes g(\hbar)]
\coloneqq\{\alpha,\beta\}\otimes f(\hbar)g(\hbar)\text{ 
for any $\alpha,\beta\in L$ and $f(\hbar),g(\hbar)\in 
k[\hbar]$}\]
with $\{-,-\}$ in Theorem~\ref{L}.
With this theorem, it is straightforward to check the following (see also Turaev \cite[\S11.5]{Turaev}).

\begin{corollary}\label{qh}
For $L_\hbar$,
let $\varepsilon_\hbar(L_\hbar)$ be given by
Definition \ref{def:epsilon(g)}.
Then the map 
$q_h:\varepsilon_\hbar(L_\hbar)\to S(L)$ induced by the
augmentation map $k[\hbar] \to k$ is a reduced 
coquantization. 
\end{corollary}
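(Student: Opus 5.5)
We have to show that $q_h:\varepsilon_\hbar(L_\hbar)\to S(L)$, the map $\hbar\mapsto 0$, is a reduced coquantization in the sense of Definition \ref{def:coquan}. The plan is to reduce every required property to the explicit form of the coproduct $\nabla_\hbar$ together with Example \ref{epsilon}, which guarantees that $L$ is spiral.

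First we check that the construction is well defined. Since $L$ is spiral, so is $L_\hbar=k[\hbar]\otimes L$ as a $k[\hbar]$-Lie coalgebra; the same length argument applies verbatim. For a fixed $a\in S(L_\hbar)$, only finitely many terms of the modified Campbell–Hausdorff series $\mu_\hbar$ act nontrivially: the iterated brackets on $L^*$ dualize to the iterates $\nu^n$, and these vanish on $a$ for $n$ large. Hence $\nabla_\hbar(a)$ is a genuine polynomial in $\hbar$, and $\varepsilon_\hbar(L_\hbar)$ is a bialgebra over $k[\hbar]$ that is free as a module, with underlying module $k[\hbar]\otimes S(L)$. Coassociativity follows from the associativity of $\mu_\hbar$ as a formal group law, and multiplicativity holds because precomposition with $\mu_\hbar$ is an algebra map on polynomial functions.

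Next we treat $q_h$ itself. The map $q_h=\varphi\otimes\id:k[\hbar]\otimes S(L)\to S(L)$ is $\varphi$-linear and surjective, and its kernel is exactly $\hbar\,\varepsilon_\hbar(L_\hbar)=\ker\varphi\cdot A$. This gives the reducedness. To see that it is a coalgebra map, set $\hbar=0$: then $\mu_0(x,y)=x+y$, so $\nabla_0$ is the standard shuffle coproduct of $S(L)$.

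The remaining point is the coquantization congruence. By the displayed formula for $\nabla_\hbar$,
\[
\nabla_\hbar(a)-\perm\nabla_\hbar(a)\equiv \tfrac{\hbar}{2}\bigl(\nu(a)-\perm\,\nu(a)\bigr)=\hbar\,\nu(a)
\]
modulo $\hbar^2(\cdots)$. Here the primitive part $a\otimes1+1\otimes a$ is symmetric, and $\nu$ on $S(L)$ is antisymmetric; the latter holds on generators by skew-symmetry and extends via (\ref{copobialge}), since $\Delta$ is cocommutative. The error term lies in $\hbar^2 A^{\otimes2}$, and this is contained in $\hbar\ker(q_h\otimes q_h)=\hbar\cdot\hbar A^{\otimes 2}$. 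This proves the congruence, with $(q_h\otimes q_h)^{-1}$ realized by the constant lift.

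The only genuinely delicate step is the one I expect to be the main obstacle: verifying that the coefficient of $\hbar$ in $\nabla_\hbar$ on products $a\in S^n(L)$ is $\tfrac12\nu(a)$, where $\nu$ is extended by the co-Leibniz rule. I would check this by dualizing the linear term $\tfrac{\hbar}{2}[x,y]$ of $\mu_\hbar$. Precomposition is multiplicative, so the first-order part is a derivation-type extension of $\tfrac12\nu$ on generators, and this matches (\ref{copobialge}) exactly.
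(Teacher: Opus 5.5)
Your proposal is correct and is essentially the routine verification the paper leaves to the reader with a pointer to Turaev (Theorem~11.4 and \S11.5). You get reducedness from $\ker q_h=\hbar\,\varepsilon_\hbar(L_\hbar)$ on the free module $k[\hbar]\otimes S(L)$, the coalgebra property from $\mu_0(x,y)=x+y$, and the coquantization congruence from the first-order term $\tfrac{\hbar}{2}\nu$ of $\nabla_\hbar$, extended to products via $\nu(ab)=\nu(a)\Delta(b)+\Delta(a)\nu(b)$ together with antisymmetry of $\nu$. One slip is cosmetic: for $a\in S^n(L)$ with $n\geq 2$ the $\hbar^0$-part of $\nabla_\hbar(a)$ is $\Delta(a)$ rather than $a\otimes 1+1\otimes a$, but it is still symmetric by cocommutativity, so the argument is unaffected.
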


\section{The bialgebra $\na$}\label{algebrana}

In this section, we construct the bialgebra
$\na$ in Theorem~\ref{main}.
It is almost the same as the
one constructed by 
Schedler in \cite{Schedler},
and the only difference is that
here our algebra $\na$ is over $k[h,\hbar]$
while his is over $k[\hbar]$.
The bialgebra $\na$ is a refinement of Schedler's Hopf algebra satisfying in the sense of Drinfeld-Turaev's biquantization two-parameters framework.
Since it is the key notion of this paper, we
give full details of its construction.
We ask for the reader's forgiveness for duplication.

In Section~\ref{construct na}, we construct the 
algebra $\na$.
In Section~\ref{sec:coprodna}, we construct a 
coproduct structure on $\na$.
In Section~\ref{sec:bialg na}, we prove that this 
coproduct together with the natural product gives 
$\na$ a bialgebra structure.
The main result of this section is Theorem \ref{bi}.

\subsection{The algebra \(\na\)}\label{construct na}

We start with some notations 
(cf. Schedler \cite[Section~3.1]{Schedler}). 
Put $\mathsf{AH} := \overline{Q} \times \mathbb{N}$. 
Each element is an arrow with a height, i.e., an 
arrow in the double quiver $\overline{Q}$ equipped 
with a ``height'' parameter in $\mathbb{N}$.
Let $E_{\overline{Q}}$ denote the free $k$-vector 
space spanned by $\mathsf{AH}$ and $E_0=k^{|Q_0|}$ 
the ring generated by the vertex set $Q_0$. 
Then let $T_{E_0}(E_{\overline{Q}})$ be the tensor 
algebra such that the product of two paths is the 
composition if the first path ends at the start 
vertex (in $E_0$) of the second path, or otherwise 
zero. 
Put $\mathsf{LH} :=T_{E_0} E_{\overline{Q}}/[T_{E_0} 
E_{\overline{Q}},T_{E_0} E_{\overline{Q}}]$, the 
space of cyclic paths with heights.
Let $\mathsf{SLH}$ be the symmetric algebra of 
$\mathsf{LH}$.

Consider an element $X\in \mathsf{SLH}$ as below such that all involved heights $\lambda_{i,j}$ are all distinct \begin{equation}\label{form}
 \begin{split}
X:=(a_{1,1},\lambda_{1,1})\cdots (a_{1,l_{1}},\lambda_{1,l_{1}}) \& (a_{2,1},\lambda_{2,1})\cdots (a_{2,l_{2}},\lambda_{2,l_{2}})
\\ \& \cdots  \& 
(a_{k,1},\lambda_{k,1})\cdots (a_{k,l_{k}},\lambda_{k,l_{k}}) 
\&v_{1} \& v_{2} \&\cdots \& v_{m},
\end{split}
\end{equation}
where $a_{i,j}\in \overline{Q}$, $v_{i}\in Q_{0}$, $a_{i,1}\cdots a_{i,l_i}$ is a cyclic path in $k\dq$, and ``$\&$" denotes the symmetric product in $\mathsf{SLH}$.
An element $X$ of the form (\ref{form}) is called a $\textit{link}$. 
If $k=1$ and $m=0$, then $X$ is said to be a $\textit{knot}$. 

Let $\mathsf {SLH}'[h,\hbar]$ denote free $k[h,\hbar]$-module generated by $X$ having the form $(\ref{form})$.
Put $\tilde{A}[h,\hbar]$ to be the quotient of $\mathsf {SLH}'[h,\hbar]$ obtained by identifying any element $X$ of the form $(\ref{form})$  with the one obtained from $X$ by replacing the $ \lambda_{i,j}$ with $ \lambda'_{i,j}$ preserving order: that is, $ \lambda_{i,j}< \lambda_{i',j'}$ if and only if $ \lambda'_{i,j}< \lambda'_{i',j'}$ .
 
Consider the $k[h,\hbar]$-submodule $\tilde{B}[h,\hbar]$ of $\tilde{A}[h,\hbar]$ spanned by all elements of the following forms,
\begin{eqnarray} 
&& X-X'_{i,j,i',j'} -hX''_{i,j,i',j'}, \nonumber\\
&&\quad\textup{where}\  i \neq i',\lambda_{i,j}<\lambda_{i',j'}\text{, and there is no }(i'',j'')\text{ such that }\lambda_{i,j} < \lambda_{i'',j''} < \lambda_{i',j'};\label{quiver skein relations 1}\\
&&X-X'_{i,j,i,j'} - \hbar X''_{i,j,i,j'}, \nonumber\\ 
&&\quad\textup{where}\  \lambda_{i,j}<\lambda_{i,j'}\text{ and there is no }(i'',j'')\text{ such that }\lambda_{i,j} < \lambda_{i'',j''} < \lambda_{i,j'}. \label{quiver skein relations 2}
\end{eqnarray}
The summands above are defined as below:
\begin{itemize}
\item[$-$] $X'_{i,j,i',j'}$ is the same as 
$X$ except for that the heights $\lambda_{i,j}$ and $\lambda_{i',j'}$ are
interchanged.
$X''_{i,j,i',j'}$ is given by replacing the two 
components \[(a_{i,1},\lambda_{i,1})\cdots 
(a_{i,l_{i}},\lambda_{i,l_{i}})\text{ and }(a_{i',1},\lambda_{i',1})\cdots 
(a_{i',l_{i'}},\lambda_{i',l_{i'}})\]
with the single component
\begin{displaymath}
\langle a_{i,j},a_{i',j'}\rangle 
(a_{i,j+1},\lambda_{i,j+1})\cdots (a_{i,j-1},\lambda_{i,j-1})
(a_{i',j'+1},\lambda_{i',j'+1})\cdots 
(a_{i',j'-1},\lambda_{i',j'-1}).
\end{displaymath}

\item[$-$] $X'_{i,j,i,j'}$ is the same as 
$X'_{i,j,i',j'}$ except for that the heights 
$\lambda_{i,j}\textup{ and }\lambda_{i,j'}$ are interchanged.
$X''_{i,j,i,j'}$ is given by replacing the 
component $(a_{i,1},\lambda_{i,1})\cdots 
(a_{i,l_{i}},\lambda_{i,l_{i}})$ with the following two components
\begin{align*}
\langle a_{i,j},a_{i,j'}\rangle 
(a_{i,j'+1},\lambda_{i,j'+1})\cdots
(a_{i,j-1},\lambda_{i,j-1})
\&
(a_{i,j+1},\lambda_{i,j+1})\cdots 
(a_{i,j'-1},\lambda_{i,j'-1}).
\end{align*} 
\end{itemize}

The product $X \ast Y$ of $X,Y \in \tilde{A}[h,\hbar]$ is defined as ``placing $Y$ above $X$''. Precisely, given two elements $X$, $Y$ of form (\ref{form}), we may assume that all heights of edges in $Y$ are greater than those in $X$. Indeed, one may replace the heights $\lambda'_{i,j}$ in $Y$ with $N+\lambda'_{i,j}$ using a large enough $N$. Then $X \ast Y$ is given by $X \& Y$. 
One may straightforwardly check $\tilde{B}[h,\hbar]$ is an ideal in the algebra $\tilde{A}[h,\hbar].$

\begin{construction}[The algebra $\na$]\label{na construction}
Let $\tilde{A}[h,\hbar]$ and $\tilde{B}[h,\hbar]$ be defined as above.
Put
\[\mathbf{N}(Q)_{h,\hbar} := \tilde{A}[h,\hbar] / \tilde{B}[h,\hbar].\] 
It is endowed with an algebra structure inherited
from $\tilde{A}[h,\hbar]$.
\end{construction}

From now on, by a \textit{product link}, we mean a link that is obtained by product of knots.
By relations \eqref{quiver skein relations 1}, \eqref{quiver skein relations 2}, we have
 \begin{proposition}\label{remark:general form2}\label{formmodh}
For an arbitrary element $X\in \na$, consider 
the following
two cases: \begin{enumerate}[label = 
\rm{(}\arabic*),noitemsep,leftmargin=*,nosep]
\item $[X]\in\na/\hbar\na$; 

\item $[X]\in\na/h\na$.
\end{enumerate}  
Then for both cases, the class $[X]$ has a representative element such that: 
\begin{itemize}[noitemsep,leftmargin=*,nosep]
\item 
In case (1), this representative is a linear combination over $k[h]$ of product links,
and in case (2), 
this representative is a linear combination over $k[\hbar]$ of product links. 

\item The heights satisfy $\lambda_{i,j} < \lambda_{i',j'}$ whenever $(i, j) < (i', j')$ in the lexicographical order.
\end{itemize}
 \end{proposition}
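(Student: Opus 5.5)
We argue by induction on a complexity measure of links, reducing case by case. It suffices to treat a single link $X$ of the form \eqref{form}, since $\na$ is spanned over $k[h,\hbar]$ by such links and both quotients are $k[h,\hbar]$-linear. Take case (1), working modulo $\hbar$; case (2) is symmetric, with the roles of \eqref{quiver skein relations 1} and \eqref{quiver skein relations 2} exchanged. Modulo $\hbar$, relation \eqref{quiver skein relations 2} says that two \emph{adjacent} heights belonging to the same component may be swapped freely. Relation \eqref{quiver skein relations 1} says that two adjacent heights belonging to different components may be swapped at the cost of an error term $hX''$. This error term merges the two components into one and has strictly fewer arrows, since the two arrows $a_{i,j},a_{i',j'}$ are deleted.

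We induct on the total number $n$ of arrows in $X$. If $n=0$, then $X=v_1\&\cdots\&v_m$ is already a product of vertex knots, so the claim holds. For general $X$, the target is the height ordering in which all heights of component $1$ lie below all heights of component $2$, and so on. Within each component the heights should also be increasing in $j$, up to the cyclic rotation that the statement's labelling permits. We reach this target by a sequence of adjacent transpositions of heights, for instance by bubble sort with respect to the lexicographic order on the labels $(i,j)$. Each transposition of two adjacent heights in the same component costs nothing modulo $\hbar$. Each transposition between different components produces an error term $\pm h X''$. Here $X''$ is again a link of the form \eqref{form}, with distinct heights and $n-2$ arrows, possibly zero if the pairing $\langle\cdot,\cdot\rangle$ vanishes. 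By the induction hypothesis, $X''$ is congruent modulo $\hbar$ to a $k[h]$-combination of product links with lexicographically ordered heights.

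Once the heights are sorted, the link in which the heights of component $i$ all lie below those of component $i+1$ is exactly the product $K_1\ast K_2\ast\cdots\ast K_k\ast v_1\ast\cdots\ast v_m$ of its knots, by the definition of $\ast$ as stacking. It is therefore a product link, and its heights satisfy the lexicographic condition. The vertex components carry no heights and commute freely. Each error term is handled by the inductive step, so summing gives the desired representative over $k[h]$.

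The main obstacle is checking that the sorting procedure is well defined at the level of adjacency, since the relations only allow swapping heights with no other height strictly between them. Swapping the label positions of two adjacent heights is an adjacent transposition of the permutation recording which arrow occupies which height level, so bubble sort uses only such moves. We must also check that rotating the starting point of a cyclic component is compatible with the statement's ordering convention. This holds because the word $a_{i,1}\cdots a_{i,l_i}$ is only defined up to cyclic rotation in $\mathsf{LH}$, so we may relabel the component so that its lowest height comes first.
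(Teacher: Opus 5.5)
Your proof is correct. Its mechanism is the paper's: reach the lexicographic height order by height-adjacent swaps, using that one of \eqref{quiver skein relations 1}, \eqref{quiver skein relations 2} is free modulo the relevant parameter while the other produces a lower-order error term. What differs is the bookkeeping.

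The paper handles the two cases with different inductions:
\begin{itemize}
\item In case (1) it inducts on the number of components, with an inner induction on the number $m(X)$ of out-of-order indices.
\item In case (2) it first uses the free cross-component swaps to separate the components. It then inducts on the within-component count $n(Y)$, re-separating each error term $Y''$.
\end{itemize}
You run a single strong induction on the number of arrows. This drops by two in every error term, whether the term merges components or splits one. So the two cases really are symmetric and one argument covers both. Termination at a fixed level is automatic, since each bubble-sort move removes exactly one inverted pair.

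This buys some robustness. The paper's $m(X)$ and $n(Y)$ count out-of-order \emph{indices} rather than inverted \emph{pairs}, and their asserted decrease under a single swap can fail. For example, take three arrows labelled $A<B<C$ at heights $3,1,2$. They have one out-of-order label, and each height-adjacent swap leaves this count at $1$ or raises it to $2$. So the paper's inner inductions are best read with the inversion-pair count, which is exactly what your bubble sort decreases.
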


\begin{proof}
    Without loss of generality, we assume that $X$ is a link.
    We prove the result for case (1) by induction on the number of components $|X|$ of $X$.
    For a knot  \( X\in\na\) as in (\ref{form}) with $k=1$, 
    we can exchange adjacent heights $\lambda_{i,j}$ and $\lambda_{i,j'}$, because the relation (\ref{quiver skein relations 2}) writes: \begin{align}
    X-X'_{i,j,i,j'}=0 \mod \hbar.\label{formula:overhbar}\end{align} Hence there is a desired representative for $[X]\in \na/\hbar \na$. 

 Assume the existence of the desired representative of $X$ if $|X|<n$. 
Put \[m(X)\coloneqq 
\left|\left\{\begin{aligned}&\text{lower indices}\\
 &\text{$({i,j})$ from $X$}\end{aligned}\,\,\left|\,\, \begin{aligned}&\lambda_{i^{\prime},j^{\prime}}< \lambda_{i,j}\text{ and }(i^{\prime},j^{\prime})>(i,j)\\
 &\text{for some $(i^{\prime},j^{\prime})$}\end{aligned}\right.\right\}\right|.\] 
 If $m(X)=0$, then $X$ is the desired representative of $[X].$ 
 For two elements $(i',j'),(i,j)\in m(X)$, we swap $\lambda_{i',j'}$ and $ \lambda_{i,j}$ and $X=X'_{i,j,i',j'} +hX''_{i,j,i',j'}$. We have  $m(X'_{i,j,i',j'})< m(X)$ and $|X''_{i,j,i',j'}|<n$. By applying the induction on $ m(X)$ to $X'_{i,j,i',j'}$ and applying the induction on $|X|$ to $X''_{i,j,i',j'},$ we obtain the desired representative from the equation $X=X'_{i,j,i',j'} +hX''_{i,j,i',j'}$. 

We move to the proof of (2).
Similar to (\ref{formula:overhbar}), we have
\begin{align}
X-X_{i,j,i',j'}'=0\text{ in }\na/h\na.\label{formula:overh}\end{align}
This means $[X]$ has a representative $Y$ that is a $k[\hbar]$-linear combination of elements of the form (\ref{form}) so that each of these elements satisfies the following height separation condition:
\begin{align}
\max\{\,\lambda_{i,j}\mid 1\leq j\leq l_i\,\} < \min\{\,\lambda_{i+1,j}\mid 1\leq j\leq l_{i+1}\,\} \text{ for each $1 \leq i \leq k-1$}.
\label{formula:afterskein1}
\end{align}
We may restrict our attention to the case where $Y$ is equal to a product link. 
Put 
\[n(Y)\coloneqq 
\left|\left\{\begin{aligned}&\text{lower indices}\\
 &\text{$({i,j})$ from $Y$}\end{aligned}\,\,\left|\,\, \begin{aligned}&\lambda_{i,j^{\prime}}<\lambda_{i,j}\text{ and }j^{\prime}>j\\
 &\text{for some $j^{\prime}$}\end{aligned}\right.\right\}
 \right|.\] 
We prove the existence of a desired representation by induction on $n(Y)$.
Note that $Y$ is a product link.
If $n(Y) = 0$, then $Y$ is the desired representation. 
By (\ref{quiver skein relations 2}), we have \[Y = Y_{i,j,i,j'}' + \hbar Y_{i,j,i,j'}''.\]
We have $n(Y_{i,j,i,j'}') = n(Y)-1 = n(Y_{i,j,i,j'}'')$.
For $Y_{i,j,i,j'}''$, the inequality (\ref{formula:afterskein1}) may fail but one may apply (\ref{formula:overh}).
Therefore, the desired representative is given by the induction.
\end{proof}

\subsection{The coproduct on $\na$}\label{sec:coprodna}

In this subsection, we 
give the coproduct on $\na$
(see also \cite[Section~3.2]{Schedler}).

\subsubsection{Construction of coproduct on $\na$}

Throughout this subsection, fix $X$ to be an element of the form $ (\ref{form}).$ 
Admit $a_{(i,j)}:=a_{i,j}$ and $\lambda_{(i,j)}:=\lambda_{i,j}.$ 
Let \[
P_X:=\{a_{(i,j)}\mid 1\leq i \leq k,1\leq j\leq l_i\}
\] be the set of all edges in $X$.
Admit the following expression on the lower index \[a_{
(i,j)+j'}:=a_{(i,j+j'\mod l_i)}.\] 
Let $V_X:=\{v_1,\cdots,v_m\}$ be the set of vertices in $X$. Denote by $|X|$ the number of components for $X$.

\begin{definition}[Coloring on $\na$]\label{color}
For an $X\in\na$, a $n$\textit{-coloring} of $X$ is a triple $(I,\phi,c)$ as below. When there is no risk of confusion, we abbreviate the tuple $(I,\phi,c)$ to $c.$
\begin{itemize}[noitemsep,leftmargin=*,nosep]
    \item  A subset $I$ of $P_X$ and its involutive map $\phi:I\to I$ satisfy that if $a_{i,j}\in I$ then $a_{i^\prime ,j^\prime} \coloneqq \phi(a_{i,j})$ satisfies $a^*_{i,j}=a_{i^\prime,j^\prime}$ and $a_{i,j}=a^*_{i^\prime,j^\prime};$
    \item  $c:P_X\cup V_X\to\{1,2,\cdots ,n\}$ that satisfies 
\[
c(a_{i,j}) = 
\begin{cases}
c(a_{i,j+1}), & \textup{if }a_{i,j}\notin I, \\
c(a_{i^\prime,j^\prime+1}),  & \textup{if }a_{i,j}\in I,
\end{cases}
\] and $c(a_{i,j}) > c(\phi(a_{i,j}))$ if and only if $\lambda_{i,j} > \lambda_{i',j'}$.
\end{itemize}Let $\lab_n(X)$ denote the set of all $n$-colorings of $X\in \na$.
\end{definition}

For a triple of $n$-coloring $(I,\phi,c)$ and $t\in \{1,2,\cdots,n\}$, we set an element $X^t_{I,\phi,c}$ of $\mathbf{N}(Q)_{h,\hbar}$ as below. 
First, define a bijection $f:P_X\to P_X$ by
\[
 f(a_{i,j})=\begin{cases}
a_{i,j+1}, & \textup{if }a_{i,j}\notin I, \\
a_{i',j'+1} , & \textup{if }a_{i,j}\in I.
\end{cases}
\]  
Then $P_X$ decomposes into orbits of $f$: $P_X=P_1\sqcup P_2\sqcup \cdots \sqcup P_q$ where $c(P_i)=\{t\}$ for some $1\leq t\leq n $. 
We arrange the index of $P_{1},\ldots,P_{q}$ so that we can write for $t=1,\ldots,n$
$$\{P_i\mid c(P_i)=\{t\}\}=\{P_{i_{t}},P_{i_{t}+1},\cdots,P_{i_{t+1}-1} \}  \ \text{with} \ 1=i_1\leq i_2\leq\cdots\leq i_n=q.$$  
We may write $P_i=\{a_{x_1},\cdots,a_{x_p}\}$ satisfying $f(a_{x_k})=a_{x_{k+1}}$ and $f(a_{x_p})=a_{x_1}$. 
Put \begin{align}
Y_i\coloneqq y_1\cdots y_p\in\mathsf{LH}\text{ with }
y_i = 
\begin{cases}
(a_{x_i},\lambda_{x_i}), &\textup{ if}\  a_{x_i}\notin I, \\
(a_{x_i})_s,  & \textup{ otherwise}.
\end{cases}\label{formula:cutting}\end{align} 
It means that $Y_{i_{t}},Y_{i_{t}+1},\cdots,Y_{i_{t+1}-1}$ are all the $Y_i$ with color $t$. 
We also order $V_X$ so that $c(v_i)=t$ for $j_t\leq i<j_{t+1}$  again with
$1= j_1 \leq j_2 \leq \cdots\leq j_n = m$.
Put
\[
X^t_{I,\phi,c}:=Y_{i_t}\&Y_{i_t+1}\&\cdots\&Y_{i_{t+1}-1}\& v_{j_t}\&\cdots\&v_{j_{t+1}-1}.
\] 
Put
\[ |c| := |I|\textup{ and }|
|c||:=|X|- 
\sum_{t=1}^n |X_{I,\phi,c}^t|,
\]
where \(|X|\) and \(|X^t_{I,\phi,c}|\) 
are  
the number of components 
in $X$ and \(X^t_{I,\phi,c}\), 
respectively (single vertices are treated as individual 
components).
Note that $|c|$ is always even due to the 
fact that $I$ contains both $a$ and $a^*$. For brevity, 
we denote $X^t_c$ by  $X^t_{I,\phi,c}$.
Consider
\begin{align}c_- \coloneqq 
\left\{ a_{i,j} \in I \cap Q \mid \lambda_{i,j} > \lambda_{i',j'} 
\text{ for } \phi(a_{i,j}) = a_{i',j'} \right\}.
\label{formula:c-}\end{align}
Let $|c_-|$ denote the cardinal of $c_-$. Then $|c_-|$ is the number of edges in $I\cap Q$ that have height larger than the corresponding edges (via $\phi$) in $Q^*$.

\begin{definition}[Coproduct on $\na$]\label{coproduct}
Given the notations above, put
\[\langle X, c \rangle\coloneqq (-1)^{|c_-|} h^{(|c|/2 + \|c\|)/2} \hbar^{(|c|/2 - \|c\|)/2},\quad \Delta(X, c)\coloneqq X^1_{c} \otimes \cdots \otimes X^n_{c}.\]
The $(n-1)$\textsuperscript{st} coproduct $\Delta^{n-1}$ of $X$ is given by:
\[\begin{aligned}
\Delta^{n-1}(X) &
\coloneqq \sum_{(I,\phi,c) \in \lab_n(X)}
\langle X, c \rangle \Delta(X, c).
\end{aligned}
\]
When $n=2$, $\Delta^1:\na\to\na\otimes\na$ is called
the \textit{coproduct} on $\na$ 
and is denoted by $\Delta$. 
When $n = 1$, we have $\Delta^0 = \mathrm{id}$, 
and when $n = 0$, it is defined as:
\[
\Delta^{-1}(X) =
\begin{cases}
1 & \textup{ if }X = \varnothing, \\
0 & \text{otherwise}.
\end{cases}
\]
\end{definition}
\begin{remark}
    Given $X$ and $(I,\phi,c)$, we claim:
     \[
    \deg_h\langle X,c\rangle-\deg_{\hbar}\langle X,c\rangle=||c||,\quad \deg_h\langle X,c\rangle+\deg_{\hbar}\langle X,c\rangle=|c|/2.\]
The first equation geometrically means that an appearance of $h$ or $\hbar$ respectively rises and decreases the number of components by one.
The second equation means that the sum of the degrees of $h$ and $\hbar$ is equal to the number of the cutting points.
The above equations, which are implicit in Schedler's construction, could be regarded as an analog of the Conway relation in \cite[Figure~1]{Turaev}.
Our refinement is implicit in Schedler's construction but introducing $\hbar$ into the construction is necessary to match the biquantization framework.
\end{remark}
\begin{example}[Coproduct] Let $Q$ be the 
following quiver:
\begin{center}\begin{tikzpicture}[>=Stealth, line cap=round, line join=round]

\coordinate (U)  at (0,  .7);
\coordinate (D)  at (0,  -.7);
\coordinate (LU)  at (-3,  .5);
\coordinate (LD)  at (-3,  -.5);
\coordinate (ML)  at (-1.5,  -.5);
\coordinate (MR)  at (0,  -.5);
\coordinate (RU)  at (1.5,  .5);
\coordinate (RD)  at (1.5,  -.5);
\fill (LU)  circle (2pt);
\fill (LD)  circle (2pt);
\fill (ML)  circle (2pt);
\fill (MR)  circle (2pt);
\fill (RU)  circle (2pt);
\fill (RD)  circle (2pt);
\fill (U)  circle (0pt);
\fill (D)  circle (0pt);

\draw[shorten >=0.2cm,shorten <=.2cm,->]  (LU) -- node[left] {$g$} (LD);
\draw[shorten >=0.2cm,shorten <=.2cm,->]  (ML) -- node[below] {$e$} (LD);
\draw[shorten >=0.2cm,shorten <=.2cm,->]  (ML) -- node[above] {$f$} (LU);
\draw[shorten >=0.2cm,shorten <=.2cm,->]  (ML) -- node[below] {$a$} (MR);
\draw[shorten >=0.2cm,shorten <=.2cm,->]  (RD) -- node[below] {$d$} (MR);
\draw[shorten >=0.2cm,shorten <=.2cm,->]  (MR) -- node[above] {$b$} (RU);
\draw[shorten >=0.2cm,shorten <=.2cm,->]  (RU) -- node[right] {$c$} (RD);
\end{tikzpicture}
\end{center}
Let
$X=(a,1)(d^*,2)(c^*3)(b^*,4)(a^*,5)(f,6)(g,7)(e^*,9)$ in $\na$, pictorially as follows:
\begin{center}\begin{tikzpicture}[>=Stealth, line cap=round, line join=round]
\coordinate (U)  at (0,  .7);
\coordinate (D)  at (0,  -.7);
\coordinate (LU)  at (-3,  .5);
\coordinate (LD)  at (-3,  -.5);
\coordinate (ML)  at (-1.5,  -.5);
\coordinate (MR)  at (0,  -.5);
\coordinate (RU)  at (1.5,  .5);
\coordinate (RD)  at (1.5,  -.5);
\fill (LU)  circle (2pt);
\fill (LD)  circle (2pt);
\fill (ML)  circle (2pt);
\fill (MR)  circle (2pt);
\fill (RU)  circle (2pt);
\fill (RD)  circle (2pt);
\fill (U)  circle (0pt);
\fill (D)  circle (0pt);

\draw[shorten >=0.2cm,shorten <=.2cm,->]  (LU) -- node[left] {$g$} (LD);
\draw[shorten >=0.2cm,shorten <=.2cm,->]  (LD) -- node[below] {$e^*$} (ML);
\draw[shorten >=0.2cm,shorten <=.2cm,->]  (ML) -- node[above] {$f$} (LU);
\draw[shorten >=0.2cm,shorten <=.2cm,->]  ([yshift= -2pt]ML) -- node[below] {$a$} ([yshift= -2pt]MR);
\draw[shorten >=0.2cm,shorten <=.2cm,->]  (MR) -- node[below] {$d^*$} (RD);
\draw[shorten >=0.2cm,shorten <=.2cm,->]  (RD) -- node[right] {$c^*$} (RU);
\draw[shorten >=0.2cm,shorten <=.2cm,->]  (RU) -- node[above] {$b^*$} (MR);
\draw[shorten >=0.2cm,shorten <=.2cm,->]  ([yshift= 2pt]MR) -- node[above] {$a^*$} ([yshift= 2pt]ML);
\end{tikzpicture}
\end{center}
Noticing
that the only non-trivial coloring $(I,\phi,c)$ is as 
follows:
\begin{center}\begin{tikzpicture}[>=Stealth, line cap=round, line join=round]
\coordinate (U)  at (0,  .7);
\coordinate (D)  at (0,  -.7);
\coordinate (LU)  at (-3,  .5);
\coordinate (LD)  at (-3,  -.5);
\coordinate (ML)  at (-1.5,  -.5);
\coordinate (MR)  at (0,  -.5);
\coordinate (RU)  at (1.5,  .5);
\coordinate (RD)  at (1.5,  -.5);
\fill (LU)  circle (2pt);
\fill (LD)  circle (2pt);
\fill (ML)  circle (2pt);
\fill (MR)  circle (2pt);
\fill (RU)  circle (2pt);
\fill (RD)  circle (2pt);
\fill (U)  circle (0pt);
\fill (D)  circle (0pt);

\draw[shorten >=0.2cm,shorten <=.2cm,->]  (LU) -- node[left] {$1$} (LD);
\draw[shorten >=0.2cm,shorten <=.2cm,->]  (LD) -- node[below] {$1$} (ML);
\draw[shorten >=0.2cm,shorten <=.2cm,->]  (ML) -- node[above] {$1$} (LU);
\draw[shorten >=0.2cm,shorten <=.2cm,->]  ([yshift= -2pt]ML) -- node[below] {$1$} ([yshift= -2pt]MR);
\draw[shorten >=0.2cm,shorten <=.2cm,->]  (MR) -- node[below] {$2$} (RD);
\draw[shorten >=0.2cm,shorten <=.2cm,->]  (RD) -- node[right] {$2$} (RU);
\draw[shorten >=0.2cm,shorten <=.2cm,->]  (RU) -- node[above] {$2$} (MR);
\draw[shorten >=0.2cm,shorten <=.2cm,->]  ([yshift= 2pt]MR) -- node[above] {$2$} ([yshift= 2pt]ML);
\end{tikzpicture}\end{center}
then we have $c_-=\varnothing,|c|=2$ and $||c||=-1$. So we have the coproduct of $X$ to be
\begin{center}
\begin{tikzpicture}[>=Stealth, line cap=round, line join=round]

\coordinate (U)  at (0,  .7);
\coordinate (D)  at (0,  -.7);
\coordinate (LL)  at (-4.3,  0);
\coordinate (LU)  at (-1.8,  .5);
\coordinate (LD)  at (-1.8,  -.5);
\coordinate (ML)  at (-.3,  -.5);
\coordinate (M)   at (0,0);
\coordinate (MR)  at (.3,  -.5);
\coordinate (RU)  at (1.8,  .5);
\coordinate (RD)  at (1.8,  -.5);
\fill (LU)  circle (2pt);
\fill (LD)  circle (2pt);
\fill (ML)  circle (2pt);
\fill (MR)  circle (2pt);
\fill (RU)  circle (2pt);
\fill (RD)  circle (2pt);
\fill (U)  circle (0pt);
\fill (D)  circle (0pt);

\node at (LL) {$\Delta(X) = 1\otimes X+X\otimes 1 + \hbar$};
\node at (M) {$\otimes$};

\draw[shorten >=0.2cm,shorten <=.2cm,->]  (LU) -- (LD);
\draw[shorten >=0.2cm,shorten <=.2cm,->]  (LD) -- (ML);
\draw[shorten >=0.2cm,shorten <=.2cm,->]  (ML) -- (LU);
\draw[shorten >=0.2cm,shorten <=.2cm,->]  (MR) -- (RD);
\draw[shorten >=0.2cm,shorten <=.2cm,->]  (RD) -- (RU);
\draw[shorten >=0.2cm,shorten <=.2cm,->]  (RU) -- (MR);
\end{tikzpicture}
\end{center}
This turns out to justify the coquantization condition in Definition~\ref{def:coquan}.
\end{example}

\subsubsection{Well-definedness of the 
coproduct}
  
Next, we check that 
$\Delta$ is well-defined; i.e., 
for $\tilde{B} \coloneqq \tilde{B}[h,\hbar],$ we have 
\[
\Delta^{n-1}(\tilde{B})\subset 
\tilde{B}^n:=\sum_{i=1}^n1^{\otimes(i-1)} 
\otimes\tilde{B}\otimes 1^{\otimes(n-i)}.
\]
When $n=2$, it becomes \[
\Delta(\tilde{B})\subset \tilde{B}^2:=1 \otimes\tilde{B}+\tilde{B} \otimes 1.
\]
\begin{lemma}\label{well}
The map $\Delta: \na \to \na\otimes \na$ in Definition~\text{\rm{\ref{coproduct}}} is well-defined. 
\end{lemma}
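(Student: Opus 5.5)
We need to show that $\Delta$ is well defined as a map on $\na$. This means three things: $\Delta$ is independent of the order-preserving relabelling of heights used to form $\tilde{A}[h,\hbar]$, it is $k[h,\hbar]$-linear, and $\Delta(\tilde B)\subset \tilde B^2$ for $\tilde B=\tilde B[h,\hbar]$.

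Independence of relabelling is immediate. Definition~\ref{color} and the formula $\langle X,c\rangle$ only use the relative order of heights: the condition $c(a)>c(\phi(a))\Leftrightarrow\lambda>\lambda'$, and the set $c_-$. So colorings of $X$ and of its relabelled version correspond bijectively, with the same coefficients and with relabelled tensor factors. The substance of the proof is therefore to fix a generator $R=X-X'-hX''$ of type \eqref{quiver skein relations 1}, or $R=X-X'-\hbar X''$ of type \eqref{quiver skein relations 2}, and show that $\Delta(R)\in\tilde B\otimes\na+\na\otimes\tilde B$.

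Let $e=a_{i,j}$ and $e'=a_{i',j'}$ be the two edges whose adjacent heights are swapped, with $\lambda(e)<\lambda(e')$ and no height in between. I will partition $\lab_2(X)$, $\lab_2(X')$ and $\lab_2(X'')$ according to how the coloring treats $e,e'$.

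(a) The pair $\{e,e'\}$ is not matched by $\phi$ and $c(e)=c(e')$. Then the same datum is a coloring of $X'$, because no inequality involving $e$ or $e'$ changes: the only pairs whose order flips are $(e,e')$ itself. So $\Delta(X,c)$ and $\Delta(X',c)$ differ only by the height swap inside one tensor factor. That factor, combined with the corresponding coloring of $X''$, yields exactly a skein relation of the same type (or of the other type, if $e,e'$ end up in different components of that factor) inside $\tilde B$ tensored with the other factor. The parameter must be adjusted accordingly, using the remark $\deg_h-\deg_\hbar=\|c\|$ to track whether the new relation needs $h$ or $\hbar$.

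(b) The pair is unmatched and $c(e)\neq c(e')$. Here the swap is invisible after separating colors, so $\Delta(X,c)=\Delta(X',c)$ and these terms cancel. I must also check that $X''$ contributes nothing in this case: a coloring of $X''$ joined at $e,e'$ with the two pieces differently colored is impossible by the cyclic color-propagation rule.

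(c) The pair is matched, $\phi(e)=e'$. This forces $\langle e,e'\rangle\neq0$. Then $c$ on $X$ and the coloring with swapped colors on $X'$ correspond, and the sign $(-1)^{|c_-|}$ and the constraint $c(e)>c(e')\Leftrightarrow\lambda(e)>\lambda(e')$ flip. These terms must cancel against the terms of $X''$ in which the resmoothed vertex is unsplit. The point is that cutting $X$ at $\{e,e'\}$ produces the same tensor as coloring $X''$, up to the factor $h^{\pm}$ or $\hbar^{\pm}$, matched precisely by the exponent shift of $\langle X,c\rangle$. Removing one matched pair lowers $|c|/2$ by one and changes $\|c\|$ by $\pm1$ depending on the relation type.

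The main obstacle is the bookkeeping in case (c), together with the compensating terms in (a). There I must verify that exponents and signs agree exactly, which means checking that the $h$/$\hbar$ exponents remain nonnegative integers and that $(-1)^{|c_-|}$ accounts for the orientation $\langle e,e'\rangle=-\langle e',e\rangle$. I would handle it by comparing, for fixed colors on all edges other than $e,e'$, the finite list of local configurations, which is at most four, as in Schedler's proof for the one-parameter case. The only new feature here is the parameter split via $\|c\|$.

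Finally, extending to $n$-colorings is the same argument, so $\Delta^{n-1}(\tilde B)\subset\tilde B^n$ as well.
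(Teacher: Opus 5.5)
Your case split is the paper's: (a) with $e,e'\notin I$ is its $C_1,C_2$, (b) is its $C_5,C_6$, and (c) is its $C_3,C_4$. Tracking exponents through $|c|/2$ and $\|c\|$ is also exactly its computation. The gap is in how you organise (c).

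There is no correspondence between matched colorings of $X$ and ``swapped'' matched colorings of $X'$. Colors are constant on $f$-orbits, so $c(e)$ and $c(e')$ cannot be swapped locally. Swapping $1\leftrightarrow 2$ globally breaks the height conditions of the other pairs in $I$, which are identical in $X$ and $X'$. For instance, take a necklace $e\,x\,e'\,y\,g\,z\,g'\,w$ with $I=\{e,e',g,g'\}$. The orbits are $P=\{y,g,w,e\}$, $Q=\{x,e'\}$ and $R=\{z,g'\}$. If $\lambda(g)<\lambda(g')$, then $c(P)=1$ and $c(Q)=2$ are forced, so this $(I,\phi)$ carries a coloring of $X$ but none of $X'$.

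What is true is a termwise statement. Let $c$ be a coloring of $X$ or of $X'$ with $\phi(e)=e'$. Its restriction $c''$ to $P_{X''}$, with $I''=I\setminus\{e,e'\}$, is a coloring of $X''$ with the same tensor factors. Taking $e\in Q$, we get $\langle X,c\rangle=h^{\varepsilon}\langle X'',c''\rangle$, respectively $\langle X',c\rangle=-h^{\varepsilon}\langle X'',c''\rangle$ because $e$ now lies in $c_-$. Here $h^{\varepsilon}$ is $h$ or $\hbar$ according to the relation type. So each such term cancels its own $X''$-term.

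To see that every term of $\Delta(X'')$ is used exactly once, you need to partition $\mathrm{Col}_2(X'')$ according to whether $c''(e+1)$ and $c''(e'+1)$ agree.
\begin{itemize}
\item If they agree, these are exactly the restrictions of your case (a) colorings with $e,e'\notin I$.
\item If they differ, each extends uniquely to a matched coloring of exactly one of $X$ or $X'$, chosen by the height order.
\end{itemize}
If ``unsplit'' in your phrase ``the terms of $X''$ in which the resmoothed vertex is unsplit'' means equal colors, it names the half already consumed in (a). Either way, the proposal never says which $X''$-terms absorb (c), and the $X$--$X'$ pairing it does state is false.

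You also omit the case where $e'$ immediately follows or precedes $e$ in the same cycle. The paper treats this separately (its cases $C_1'$--$C_4'$). There $X''$ contains an isolated vertex $(e)_t$ or $(e)_s$, which is a separate component with a free color. A matched coloring of $X$ then has the singleton orbit $\{e'\}$ or $\{e\}$, which becomes that vertex. The restriction map must give the vertex the color of that orbit, and there is no analogue of (b) with $e,e'\notin I$.

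Finally, in (a) and (b), when $e$ or $e'$ is cut against a third edge, there is no $X''$-term at all and simply $\Delta(X,c)=\Delta(X',c)$. Your description of (a) attaches an $X''$-term there. With these repairs the argument goes through along the paper's lines.
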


\begin{proof}
Fix indices $i,i',j,j'$ as in (\ref{quiver skein relations 1}) and (\ref{quiver skein relations 2}) such that $i$ and $i'$ may be equal. 
Let $|X|$ (resp. $|X''|$) denote the number of components in $X$ (resp. $X''$).
Put \( X' := X'_{i,j,i',j'} \), \( X'' := X''_{i,j,i',j'} \), and  
\[G \coloneqq X - X' - h^{\varepsilon} X'',\text{ where }
h^\varepsilon = \begin{cases}
h, &\textup{ if } \varepsilon = 1, \\
\hbar, & \textup{ if }\varepsilon = -1,
\end{cases}\text{ with }\varepsilon = |X|-|X''|.\]  
The goal is to show $\Delta(G) \in \tilde{B}^2$.
Put $f\coloneqq a_{i,j}\in Q $ and $f^*\coloneqq a_{i',j'}\in Q^*$. Put $P_{X''} := P_X\setminus \{ f, f^* \}$, the set of all edges of $X''.$
We prove this lemma for the case $\lambda(f^*)>\lambda(f)$, but the proof of the case $\lambda(f^*)<\lambda(f)$ turns out to be similar and is omitted.
We first show the lemma for the case where neither $a_{i', j'} = a_{i, j+1}$ nor $a_{i, j} = a_{i', j'+1}$ holds. 

We find relations between the colorings of $X$, $X'$, and $X''$ using the following graph, where $f+1$ (resp. $f-1$, etc.) denotes $a_{i,j+1}$ (resp. $a_{i,j-1}$, etc.)

\begin{minipage}{.4\textwidth}
\begin{center}
\begin{tikzpicture}[>=Stealth, line cap=round, line join=round]

\coordinate (Ltop)  at (-1.5,  1.0);
\coordinate (Lmid)  at (-1.5,  0.0);
\coordinate (Lbot)  at (-1.5, -1.0);

\coordinate (Rtop)  at (1.5,  1);
\coordinate (Rmid)  at (1.5,  0);
\coordinate (Rbot)  at (1.5, -1);

\coordinate (Mtop)  at (-0,  1.0);
\coordinate (Mbot)  at (-0, -1.0);
\coordinate (Mbbot) at (-0, -1.5);

\fill (Ltop)  circle (2pt);
\fill (Lbot)  circle (2pt);
\fill (Lmid) circle (2pt);
\fill (Rmid) circle (2pt);
\fill (Rtop)  circle (2pt);
\fill (Rbot)  circle (2pt);

\draw[shorten >=0.2cm,shorten <=.2cm,->]  (Lmid)-- node[pos=0.7, below] {\scriptsize $f^*+1$\quad\quad\quad\quad}  (Ltop);
\draw[shorten >=0.2cm,shorten <=.2cm,->]  (Lbot) -- node[pos=0.7, below] {\scriptsize $f-1$\quad\quad\quad\quad} (Lmid);

\draw[shorten >=0.2cm,shorten <=.2cm,->]  (Rtop)-- node[pos=0.7, above] {\scriptsize \quad\quad\quad$f^*-1$} (Rmid);
\draw[shorten >=0.2cm,shorten <=.2cm,->]  (Rmid) -- node[pos=0.7, above] {\scriptsize \quad\quad\quad$f+1$} (Rbot);

\draw[shorten >=0.2cm,shorten <=.2cm,->]  ([yshift=2pt]Rmid)-- node[pos=0.4, above] {\scriptsize $f^*$\quad\quad}  ([yshift=2pt]Lmid);
\draw[shorten >=0.2cm,shorten <=.2cm,->]  ([yshift=-2pt]Lmid) -- node[pos=0.45, below] {\scriptsize \quad\,$f$} ([yshift=-2pt]Rmid);

\node at (Mtop) {$\cdots$};
\node at (Mbot) {$\cdots$};
\node at (Mbbot) {The case $i\neq i'$};
\draw[shorten >=0.35cm,shorten <=.2cm,->]  (Ltop) -- (Mtop);
\draw[shorten >=0.2cm,shorten <=.35cm,->]  (Mtop) -- (Rtop);
\draw[shorten >=0.2cm,shorten <=.35cm,->]  (Mbot) -- (Lbot);
\draw[shorten >=0.35cm,shorten <=.2cm,->]  (Rbot) -- (Mbot);


\end{tikzpicture}
\end{center}
\end{minipage}
\begin{minipage}{.4\textwidth}
\begin{tikzpicture}[>=Stealth, line cap=round, line join=round]

\coordinate (Ltop)  at (-1.5,  1.0);
\coordinate (Lmid)  at (-1.5,  0.0);
\coordinate (Lbot)  at (-1.5, -1.0);

\coordinate (Cleft) at (0,  0.0);
\coordinate (Cright)at (1.5,  0.0);
\coordinate (Mbbot) at (.75, -1.5);

\coordinate (Rtop)  at (3,  1);
\coordinate (Rmid)  at (3,  0);
\coordinate (Rbot)  at (3, -1);

\fill (Ltop)  circle (2pt);
\fill (Lbot)  circle (2pt);
\fill (Cleft) circle (2pt);
\fill (Cright)circle (2pt);
\fill (Rtop)  circle (2pt);
\fill (Rbot)  circle (2pt);

\node at (Lmid) {$\vdots$};
\draw[shorten >=0.15cm,shorten <=.2cm,->]  (Ltop) -- (Lmid);
\draw[shorten >=0.2cm,shorten <=.35cm,->]  (Lmid) -- (Lbot);

\node at (Rmid) {$\vdots$};
\draw[shorten >=0.2cm,shorten <=.35cm,->]  (Rbot) -- (Rmid);
\draw[shorten >=0.15cm,shorten <=.2cm,->]  (Rmid) -- (Rtop);

\draw[shorten >=0.2cm,shorten <=.2cm,->] (Cleft) -- node[pos=0.4, above] {\scriptsize \quad \quad $f^{*}+1$} (Ltop);
\draw[shorten >=0.2cm,shorten <=.2cm,->] (Lbot)  -- node[pos=0.6, below] {\scriptsize \quad\,$f-1$}  (Cleft);

\draw[shorten >=0.2cm,shorten <=.2cm,->] (Rtop)  -- node[pos=0.6, above] {\scriptsize $f^{*}-1$\quad\quad} (Cright);
\draw[shorten >=0.2cm,shorten <=.2cm,->]  (Cright)-- node[pos=0.4, below] {\scriptsize $f+1$\quad\quad}  (Rbot);

\draw[shorten >=0.2cm,shorten <=.2cm,->]  ([yshift=-2pt]Cleft)  -- node[pos=0.5, below] {\scriptsize $f$}
      ([yshift=-2pt]Cright);

\draw[shorten >=0.2cm,shorten <=.2cm,->]  ([yshift= 2pt]Cright) -- node[pos=0.5, above] {\scriptsize $f^{*}$}
      ([yshift= 2pt]Cleft);
\node at (Mbbot) {The case $i = i'$};
\end{tikzpicture}
\end{minipage}

\noindent By the rule of coloring in Definition~\ref{color}, if $f,f^*\in I$, we have $c(f^*+1) = c(f-1)$ and $c(f^*-1) = c(f+1)$. If $f,f^*\notin I$, we have $c(f^*-1) = c(f^*+1)$ and $c(f-1) = c(f+1)$.
This results in the following six cases:

\begin{table}[H] 
    \centering
    \begin{tabular}{c|c|c|c|c|c|c}
        & $C_1$ & $C_2$ & $C_3$ & $C_4$ & $C_5$ & $C_6$ \\
        \hline
        $c(f^*-1)$ & 1 & 2 & 2 & 1 & 1 & 2 \\
        $c(f+1)$   & 1 & 2 & 2 & 1 & 2 & 1 \\
        $c(f^*+1)$ & 1 & 2 & 1 & 2 & 1 & 2 \\
        $c(f-1)$   & 1 & 2 & 1 & 2 & 2 & 1 \\
    \end{tabular}
    \caption{Possible colorings}
\end{table}\vspace{-.3cm}
\vspace{-.3cm}\noindent By this table, given $f$ and $f^*$, a coloring of $X$, $X'$, or $X''$ needs to satisfy one of the cases in this table. 
Let $c = (I,\phi,c)$ denote such a coloring thereafter.
\begin{itemize}[noitemsep,leftmargin=*,nosep]
    \item If $c$ satisfies the cases $C_1$ or $C_2$, it (or its restriction) must be a coloring of $X,$ $X',$ and $X''$ simultaneously.
    We have that $f$ and $f^*$ are both not in $I$.
    \item
    If $c$ satisfies the cases $C_3$ or $C_4$, it must be a coloring of $X''$ and a coloring of exactly one of $X$ or $X'$.
    We have that $f$ and $f^*$ are both in $I$.
    \item If $c$ satisfies the cases $C_5$ or $C_6$, it is not a coloring of $X''$ and is a coloring of both $X$ and $X'$.
    We have that $f$ and $f^*$ are both not in $I$.
\end{itemize}
Consider the following notations for the summands in the coproduct of $X,$ $X',$ and $X''$ that are corresponding to $c$: 
\begin{align*}
    \Delta(X,c)   &= \langle X, c \rangle (X)_{c}^1 \otimes (X)_{c}^2,   \\
    \Delta(X',c) &= \langle X', c \rangle (X')_{c}^1 \otimes (X')_{c}^2, \\
    \Delta(X'',c) &= \langle X'', c \rangle (X'')_{c}^1 \otimes (X'')_{c}^2.
\end{align*}
The desired inclusion will be proved on a case-by-case basis.
The proof for the cases $C_2$, $C_4,$ $C_6$ are respectively similar to $C_1$, $C_3,$ $C_5$ and are
left to the interested reader. 

Given $C_1$, we compute
\begin{align}
&\Delta(X,c) - \Delta(X',c) - h^\varepsilon\Delta(X'',c)\nonumber\\
&= \langle X,c\rangle (X)_c^1\otimes (X)_c^2-\langle X',c\rangle (X')_{c}^1\otimes (X')_{c}^2-h^\varepsilon\langle X'',c\rangle (X'')_{c}^1\otimes (X'')_{c}^2\nonumber\\
&=\big(\langle X,c\rangle(X)_c^1-\langle X',c\rangle(X')^1_{c}-h^\varepsilon \langle X'',c\rangle(X'')_{c}^1\big)\otimes (X)^2_c,
\label{formula:deltacomputation}
\end{align}
where the last equation is because of $(X)^2_c=(X')^2_{c}=(X'')^2_{c}$, by  $c(f)=c(f^*)=1$.

For the same $c$, we have $\langle X,c\rangle = \langle X',c\rangle$.
We let $|X|$, $|(X)_c^1|$, $|(X)_c^2|$, etc. denote the number of knots in respectively $X$, $(X)_c^1,$ $(X)_c^2$, etc.
Note that the set $c_-$ for $X$ is the same as the one when $c$ restricts  to $X''$.
By Definition~\ref{coproduct}, we compute 
\begin{equation}
\begin{array}{ccl}
\dfrac{\langle X,c\rangle}{\langle X'',c\rangle}
    & = &\dfrac{(-1)^{|c_-|}h^{|I|/4+(|X|-|(X)^1_c|-|(X)_c^2|)/2}\hbar^{|I|/4-(|X|-|(X)^1_c|-|(X)_c^2|)}}{(-1)^{|c_-|}h^{|I|/4+(|X''|-|(X'')^1_c|-|(X'')_c^2|)/2}\hbar^{|I|/4-(|X''|-|(X'')^1_c|-|(X'')_c^2|)/2}}
\\
& = & h^{(|X|-|X''|)/2-(|(X)_c^1|-|(X'')_{c}^1|)/2}\hbar ^{-(|X|-|X''|)/2+(|(X)^1_c|-|(X'')^1_{c}|)/2}
\\
& = & \dfrac{h^\varepsilon}{h^\mu},
\end{array}\label{formula:C1compute}\end{equation}
where $\mu:=|(X)^1_c|-|(X'')^1_{c}|=\pm 1.$
By (\ref{formula:deltacomputation}), we have the desired
\begin{align*}
&  \Delta(X,c) - \Delta(X',c) 
- h^\varepsilon\Delta(X'',c) \\
& =(\langle X,c\rangle)((X)_c^1-(X')^1_{c}
-h^\mu (X'')_{c}^1)\otimes (X)^2_c \in \tilde{B}^2.
\end{align*}

For case $C_3$, 
we have $\lambda(f^*)>\lambda(f)$ in $X$, then $\Delta(X',c) = 0$.
It suffices to show  $\Delta(X,c)=h^\varepsilon\Delta(X'',c)$.
The restriction of $c$ on $X''$ is $(I'',\phi|_{I''},c|_{P_{X''}})$ with $I''=I-\{f,f^*\}$.
Hence $(X'')_{c}^i=(X)_{c}^i$ for $i=1,2$ and $|X|-|X''|=\varepsilon.$ 
By $\lambda(f^*)>\lambda(f)$, we know $c_{-} = (c|_{P_{X''}})_{-}.$
By Definition~\ref{coproduct}, similar to (\ref{formula:C1compute}), we have the desired \[
\frac{\langle X,c\rangle}{\langle X'',c \rangle}=h^{(1+\varepsilon)/2}\hbar^{(1-\varepsilon)/2}=h^\varepsilon.\]

For case $C_5$, as $f,f^*\notin I$ and $c(f^*-1)\neq c(f+1)$, we have $\Delta(X'',c)= 0$. 
Note that the only difference between $X$ and $X'$ is the heights of $f$ and $f^*$. 
Then $f,f^* \notin I$ implies $\Delta(X,c) = \Delta(X',c)$, as desired.

It remains to show $\Delta(G)\in \tilde{B}^2$ for the case where $a_{i', j'} = a_{i, j+1}$ or $a_{i, j} = a_{i', j'+1}$. 
Now, we still have the set of edges $P_{X''}$ defined as above. 
For the vertex set $V_{X''}$ of $X''$, we have $V_{X''} \coloneqq V\cup \{v\}$ with the set $V$ of vertices in $X$, satisfying
\[\begin{cases}
\textup{$v = (a_{i,j})_t$ 
and }c((a_{i,j})_t) = c(a_{i', j'}),
&\textup{if\, } a_{i', j'} = a_{i, j+1},\\
\textup{$v = (a_{i,j})_s$ 
and }c((a_{i,j})_s) = c(a_{i, j}),
&\textup{if\,} a_{i, j} = a_{i', j'+1}.
\end{cases}\]
We show the result for case $a_{i',j'}=a_{i,j+1}$. 
The proof for the other case is similar and is omitted.
Now the relation between the coloring of $X$, $X'$, and $X''$ could be seen by

\[\begin{tikzpicture}[>=Stealth, line cap=round, line join=round]

\coordinate (Lmid)  at (-1.5,  0.0);
\coordinate (Mmid)  at (0,  0.0);

\coordinate (Rtop)  at (1.5,  1.0);
\coordinate (Rmid)  at (1.5,  0);
\coordinate (Rbot)  at (1.5,  -1.0);

\coordinate (Mbbot)  at (0,  -1.5);
\fill (Lmid)  circle (2pt);
\fill (Mmid)  circle (2pt);
\fill (Rtop)  circle (2pt);
\fill (Rbot)  circle (2pt);


\draw[shorten >=0.2cm,shorten <=.2cm,->]  ([yshift=2pt]Lmid)  -- node[pos=0.5, above] {$f^{*}$} ([yshift=2pt]Mmid);

\draw[shorten >=0.2cm,shorten <=.2cm,->]  ([yshift= -2pt]Mmid) -- node[pos=0.5, below] {\scriptsize $f$}
      ([yshift= -2pt]Lmid);

\node at (Rmid) {$\vdots$};
\draw[shorten >=0.2cm,shorten <=.2cm,->]  (Mmid) -- node[pos=0.3, above]{$f^*+1$\quad\quad} (Rtop);
\draw[shorten >=0.2cm,shorten <=.2cm,->]  (Rbot) -- node[pos=0.7, below]{$f-1$\quad\quad} (Mmid);
\draw[shorten >=0.15cm,shorten <=.2cm,->]  (Rtop) -- (Rmid);
\draw[shorten >=0.2cm,shorten <=.35cm,->]  (Rmid) -- (Rbot);


\node at (Mbbot) {The case $a_{i',j'} = a_{i,j+1}$};
\end{tikzpicture}
\]

If $f,f^*\in I$, we have $c(f-1)=c(f)=c(f^*+1).$
If $f,f^*\notin I$, we have $c(f-1) = c(f) = c(f^*)=c(f^*+1).$
This results in 
\vspace{-.3cm}
\begin{table}[H]
    \centering
    \begin{tabular}{c|c|c|c|c}
        & $C_1'$& $C_2'$ &$C_3'$&$C_4'$\\
         \hline
        $c(f^*+1)$&1&2&2&1\\
       $c(f-1) = c(f)$&  1&2&2&1\\
     
       $c(f^*)=c(v)$&  1&2&1&2
        
    \end{tabular}
    \caption{Colorings}
    \label{tab:placeholder4}
\end{table}
\vspace{-.3cm}\noindent Note that there is no analogues of $C_5$ and $C_6$. For cases \(C_1'\) and \(C_2'\), we know $I=I''$, and \( X \), \( X' \), \( X'' \) are nontrivial. 
Subsequent discussions are consistent with the case $C_1$.
For cases \( C_3' \) and \( C_4' \), the vertex $v = (a_{i,j})_s$ or $(a_{i,j})_t$ is treated as a component. 
Subsequent discussions are consistent with the case $C_3$ and $C_4$.
This finishes the proof.
\end{proof}

\begin{lemma}\label{coasso}
Let the coproduct $\Delta$ be 
given in Definition \ref{coproduct}. Then the coproduct 
$\Delta$ in $\na$ is coassociative; i.e., 
\[
(\Delta\otimes 1)\circ\Delta=(1\otimes \Delta)\circ\Delta.
\]
\end{lemma}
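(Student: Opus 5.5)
Both sides of the identity will be compared with the second iterated coproduct $\Delta^2$ of Definition~\ref{coproduct}, i.e. the sum over $3$-colorings. By Lemma~\ref{well} it suffices to check the identity on a single link $X$ of the form (\ref{form}) with distinct heights. We aim to show
\[
(\Delta\otimes 1)\circ\Delta(X)=\Delta^2(X)=(1\otimes\Delta)\circ\Delta(X).
\]
The strategy is to build an explicit bijection between $\lab_3(X)$ and the set of pairs $\big((I,\phi,c),(I',\phi',c')\big)$. Here $c\in\lab_2(X)$, and $c'\in\lab_2(X^1_c)$ (for the left side) or $c'\in\lab_2(X^2_c)$ (for the right side). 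We then check that this bijection matches both the tensor factors and the coefficients $\langle X,c\rangle$.

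For the left-hand side, take a $3$-coloring $(J,\psi,d)$. Merge colors $1,2$ into a single color $1$ and send color $3$ to color $2$. Keep only those pairs $\{a,\psi(a)\}$ in $J$ whose endpoints receive different merged colors; this gives $I$. The remaining pairs of $J$, those with both endpoints colored in $\{1,2\}$, become cuts inside $X^1_c$. These cuts together with the restriction of $d$ give a $2$-coloring $c'$ of $X^1_c$. The point is that the edges of $X^1_c$ are exactly the edges of $X$ of color $\le 2$, with their original heights, and they are recombined by the map $f$. So the compatibility rule $c(a)=c(f(a))$ and the height condition ``$c(a)>c(\phi(a))$ iff $\lambda(a)>\lambda(\phi(a))$'' for $c'$ translate exactly into the same conditions for $d$. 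Conversely, given $(c,c')$, we refine colors and take $J=I\sqcup I'$. The orbit decomposition of $f_J$ is obtained by first cutting along $I$ and then along $I'$. Since both constructions follow the same rule (\ref{formula:cutting}), the tensor factors agree: $(X^1_c)^1_{c'}\otimes(X^1_c)^2_{c'}\otimes X^2_c=X^1_d\otimes X^2_d\otimes X^3_d$. The vertex components are tracked in the same way. The right-hand side is handled symmetrically by merging colors $2,3$.

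It remains to compare coefficients, and here the exponents must be additive. We have $|d|=|c|+|c'|$, and the pairs in $d_-$ split as $c_-\sqcup c'_-$, so the signs agree. For the component count, set
\[
\|d\|=|X|-\sum_t|X^t_d|.
\]
This telescopes:
\[
\|d\|=\big(|X|-|X^1_c|-|X^2_c|\big)+\big(|X^1_c|-|(X^1_c)^1_{c'}|-|(X^1_c)^2_{c'}|\big)=\|c\|+\|c'\|.
\]
Since the exponents of $h$ and $\hbar$ are linear in $|c|/2$ and $\|c\|$, we get $\langle X,d\rangle=\langle X,c\rangle\langle X^1_c,c'\rangle$, and similarly on the right. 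Hence both sides equal $\Delta^2(X)$.

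The main obstacle is making the bijection rigorous at the level of cuts. There are two issues. First, a pair in $J$ whose endpoints have colors $1$ and $2$ must become a legitimate cut pair of the intermediate link $X^1_c$. This requires checking that after cutting along $I$ the two edges still lie in $X^1_c$ with the correct successor relation, so that the condition on $c'$ reads $c'(a)=c'(f'(a))$ with $f'$ induced by $f_J$. Second, there are degenerate cases where a cut produces an isolated vertex, as in (\ref{formula:cutting}) when an orbit consists only of cut edges. There we must check that the vertex components and their colors are counted identically on both sides, so that the telescoping of $\|\cdot\|$ still holds. I would first verify the orbit statement $f_J=f'\circ$(restriction of $f_I$) on edges of color $\le2$ by direct inspection of Definition~\ref{color}, and then treat the vertex cases separately, as was done in the proof of Lemma~\ref{well}.
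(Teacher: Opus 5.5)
Your proposal is correct and follows essentially the same route as the paper: you build a bijection $\lab_3(X)\leftrightarrow\bigsqcup_{c\in\lab_2(X)}\lab_2(X^1_c)$ by merging colours $1,2$, identify the tensor factors, and use the additivity of $|c|$, $\|c\|$ and $|c_-|$ to get $\langle X,d\rangle=\langle X,c\rangle\langle X^1_c,c'\rangle$, treating the other side symmetrically. The one point you leave loose is that the edges of $X^1_c$ are the colour-$\le 2$ edges \emph{not} in $I$, so the $I$-edges of merged colour $1$ must be assigned a colour in $\{1,2\}$ by following their orbit; this is exactly what the paper's recursive rule defining $c^1(x')$ does, and your first-return description of $f_J$ achieves the same thing.
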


\begin{proof}

We prove the equation $(\Delta\otimes 1)\circ\Delta(X)= \Delta^{2}(X).$
Given this equation, the lemma follows from $(1\otimes \Delta)\circ\Delta(X)= \Delta^{2}(X)$, whose proof is similar and hence omitted. 
Compute
\[\begin{aligned}
   (\Delta\otimes1)\circ\Delta(X)&=(\Delta\otimes1)\bigg(\sum_{c\in \lab_2(X)}\langle X,c\rangle X_c^1\otimes X_c^2\bigg)\\&=\sum_{c\in \lab_2(X)}\langle X,c\rangle \sum_{c^1\in \lab_2(X_c^1)}\langle X_c^1,{c^1}\rangle (X_c^1)_{c^1}^1\otimes (X_c^1)_{c^1}^2\otimes X_c^2\\
    &=\sum_{\substack{c\in \lab_2(X)\\c^1\in \lab_2(X_c^1)}}\langle X,c\rangle\langle X_c^1,c^1\rangle (X_c^1)_{c^1}^1\otimes (X_c^1)_{c^1}^2\otimes X_c^2.
\end{aligned}\] 
Our next goal is to construct an bijection $\lab_3(X)\to \prod_{c\in \lab_2(X)}\lab_2(X_{c}^1).$
For each $c\in \lab_2(X)$ and $c^1\in \lab_2(X_c^1),$ write $c = (I,\phi,c)$ and $c^1 = (I^1,\phi^1,c^1)$.
We treat the elements in  $P_{(X_c^1)_{c^1}^1}\sqcup P_{(X_c^1)_{c^1}^2} \sqcup P_{X_c^2}\sqcup  V_{(X_c^1)_{c^1}^1}\sqcup V_{(X_c^1)_{c^1}^2} \sqcup V_{X_c^2}\sqcup I \sqcup I^1$ as the corresponding elements in $P_X\sqcup V_X$.
Put 
\begin{align}(\tilde{I},\tilde{\phi},\tilde{c}) \coloneqq (I^1 \sqcup I, \phi^1\sqcup \phi, \tilde{c})\text{ with }
\tilde{c}(x)\coloneqq \begin{cases}3, &\textup{if\,}c(x) = 2,\\
c^1(x), &\textup{if\,} 
c(x) = 1,\,\,x\notin I,\\
c^1(x'), &\textup{if\,} c(x)=1,\,\,x\in I,\end{cases}\,\,x\in P_X\sqcup V_X.
\label{formula:coass1}\end{align}
Here, the value $c^1(x')$ is defined as below (with $x = a_{i,j}$): 
\begin{enumerate}[label = \rm{(}\arabic*),noitemsep,leftmargin=*,nosep]
\item $c^1(x') = c^{1}(a_{i,j-1})$ if $a_{i,j-1}\notin I$; 
\item $c^1(x') =c^1(v)$ if $a_{i,j-1}\in I$ and $\phi(a_{i,j-1}) = a_{i,j}$, where $v$ is the vertex produced by $a_{i,j-1}$ and $a_{i,j}$ following (\ref{formula:cutting}); 
\item If $a_{i,j-1}\in I$ and $\phi(a_{i,j-1})\neq a_{i,j}$, we put $a_{i',(j-1)'}^* = \phi(a_{i,j-1})$. Define $c^1(x')$ recursively by putting $c^1(x') = c^{1}(a_{i',(j-1)'}^*)$, and $c^1(a_{i',(j-1)'}^*)$ is obtained in the same way as obtaining $c^1(x')$. 
\end{enumerate}
For $X_{\tilde{c}}^1$, $X_{\tilde{c}}^2$, and $X_{\tilde{c}}^3$, we have 
\begin{align}
X_{\tilde{c}}^1=(X^1_{c})^1_{c^1},\,\,X_{\tilde{c}}^2=(X^1_{c})^2_{c^1},\text{ and }X_{\tilde{c}}^3=X^2_{c}.\label{formula:coassoccomp}\end{align}
Given a coloring $(\tilde{I},\tilde{\phi},\tilde{c})\in \lab_3(X),$ put 
\begin{equation}
\begin{gathered}
c(x)\coloneqq \begin{cases}1, &\textup{if } \tilde{c}(x) = 1\text{ or }2\\
2, &\textup{if } \tilde{c}(x) = 3,\end{cases},\quad c^1 \coloneqq \tilde{c}|_{X_c^1},\\ 
\text{$I^1 \coloneqq \big\{a_{i,j},a_{i,j}^*\in \tilde{I}\mid c(a_{i,j}) = c(a_{i,j}^*) = 1\big\},$ $I\coloneqq \tilde{J} - I^1,$ and $\phi^1\coloneqq \tilde{\phi}|_{I^1}$, $\phi\coloneqq \tilde{\phi}|_I$}\end{gathered}\label{formula:coass2}\end{equation}
One may check straightforwardly that (\ref{formula:coass1}) and (\ref{formula:coass2}) define the following bijection
\[\lab_3(X)\to \prod_{c\in \lab_2(X)}\lab_2(X_{c}^1);\quad (\tilde{I},\tilde{\phi},\tilde{c})\mapsto (I^1,\phi^1,c^1)\in \lab_{2}(X_{c}^1).\]

Compare
\begin{align*}
(\Delta\otimes1)\circ\Delta(X)=\sum_{\substack{c\in 
\lab_2(X)\\c^1 \in \lab_2(X_c^1)}}\langle X,c\rangle\langle 
X_c^1,c^1\rangle (X_c^1)_{c^1}^1\otimes (X_c^1)_{c^1}^2\otimes 
X_c^2
\end{align*}
and
\begin{align*}
\Delta^2(X)&=\sum_{\tilde{c}\in\lab_3(X)}\langle 
X,\tilde{c}\rangle  X_{\tilde{c}}^1\otimes  
X_{\tilde{c}}^2\otimes X_{\tilde{c}}^3.
\end{align*}
By the bijection and (\ref{formula:coassoccomp}), it suffices to show $\langle X,\tilde{c}\rangle=\langle X,{c}\rangle\langle X_c^1,{c}^1\rangle$ for $c$ and $c^1$ corresponding to any given $\tilde{c}$.
By Definition~\ref{coproduct}, this follows from the computations below:
\[\begin{aligned}
|\tilde{c}|&=|\tilde{J}|=|I^1|+|I|\text{, }\\
||\tilde c||&=|X|-|X_{\tilde{c}}^1|-|X_{\tilde{c}}^2|-|X_{\tilde{c}}^3|\\&=|X|-|(X^1_{c})^1_{c^1}|-|(X^1_{c})^2_{c^1}|-|X_{c}^2|\\&=
|X|-|X_{c}^1|+|X_{c}^1|-|(X^1_{c})^1_{c^1}|-|(X^1_{c})^2_{c^1}|-|X_{c}^2|\\&=
|X|-|X_{c}^1|-|X_{c}^2|+||c^1||\\&=
||c||+||c^1||\text{, and}\\
|\tilde c_-| & =|c_-|+|c^1_-|,
\end{aligned}
\]
where the last equation follows from
\[
\begin{aligned}\tilde{c}_-&=\{a_{i,j}\in \tilde{J}\cap Q\mid \lambda_{i,j}>\lambda_{i',j'}\}\\&=\{a_{i,j}\in I^1\cap Q\mid \lambda_{i,j}>\lambda_{i',j'}\}\sqcup\{a_{i,j}\in I\cap Q\mid \lambda_{i,j}>\lambda_{i',j'}\}\\&=c^1_-\sqcup c_-.
\end{aligned}\]
This finishes the proof.
\end{proof}

By the above lemma, we are able to safely set the 
multiple composition of the coproduct: $\Delta^{(n-1)}:=
(\Delta\otimes\id\otimes\cdots\otimes\id)\circ\Delta^{(n-2)}$ 
by induction.

\subsection{Bialgebra conditions for $\na$}\label{sec:bialg na}

We next show the coproduct
is a map of algebras.

\begin{lemma}\label{bial} Let $X,Y\in \na$ be two arbitrary 
elements. Then
\[
\Delta(XY)=\Delta(X)\Delta(Y).
\]
\end{lemma}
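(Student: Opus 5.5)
By bilinearity of the product and of $\Delta$, and since Lemma~\ref{well} shows $\Delta$ is well defined on $\na$, it suffices to prove the identity when $X$ and $Y$ are links of the form \eqref{form}. We may assume every height in $Y$ exceeds every height in $X$, after replacing each $\lambda'_{i,j}$ of $Y$ by $N+\lambda'_{i,j}$, which is allowed in $\tilde A[h,\hbar]$. Then $XY=X\&Y$, with $P_{XY}=P_X\sqcup P_Y$ and $V_{XY}=V_X\sqcup V_Y$. The right-hand side expands as
\[
\Delta(X)\Delta(Y)=\sum_{c\in\lab_2(X),\,d\in\lab_2(Y)}\langle X,c\rangle\langle Y,d\rangle\,(X^1_cY^1_d)\otimes(X^2_cY^2_d),
\]
so the goal is a weight-preserving bijection between $\lab_2(XY)$ and the disjoint colorings $\lab_2(X)\times\lab_2(Y)$, together with a treatment of the remaining colorings of $XY$.

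The first step is to split $\lab_2(XY)$ into colorings $(I,\phi,c)$ with $I$ preserved by $\phi$ inside each of $P_X$ and $P_Y$, and \emph{mixed} colorings, in which some pair $\{a,\phi(a)\}$ has $a\in P_X$ and $\phi(a)\in P_Y$. For the non-mixed colorings, restriction gives $c|_X\in\lab_2(X)$ and $c|_Y\in\lab_2(Y)$. Conversely, two colorings glue into one, because the defining conditions in Definition~\ref{color} (the successor rule through $f$ and the height/color comparison) only involve pairs inside one factor. The cutting procedure \eqref{formula:cutting} acts separately on each factor, so $(XY)^t_c=X^t_{c|_X}\&Y^t_{c|_Y}$. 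Moreover the heights of $Y$ stay above those of $X$ inside each tensor factor, so this product equals $X^t_{c|_X}Y^t_{c|_Y}$ in $\na$. The weights also match: $|c|=|c|_X|+|c|_Y|$, $c_-=(c|_X)_-\sqcup(c|_Y)_-$, and, since the number of components is additive under $\&$, $\|c\|=\|c|_X\|+\|c|_Y\|$. Hence $\langle XY,c\rangle=\langle X,c|_X\rangle\langle Y,c|_Y\rangle$, and the non-mixed terms reproduce $\Delta(X)\Delta(Y)$ exactly.

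The main obstacle is showing that the mixed colorings contribute zero in total. For a mixed pair $a\in P_X$, $b=\phi(a)\in P_Y$, we always have $\lambda(b)>\lambda(a)$, so the coloring condition forces $c(b)>c(a)$, i.e.\ $c(a)=1$ and $c(b)=2$. I expect an involution argument on the mixed colorings to handle this. Fix the mixed pair minimal in a chosen order, and toggle some choice so that the two colorings produce the same tensor $(XY)^1_c\otimes(XY)^2_c$ with weights of opposite sign. The natural candidate is to reroute the chord, or to pair $(a,b)$ with the corresponding pair of dual edges $(a^*,b^*)$, which changes $|c_-|$ by one. If that does not directly give cancellation, my fallback is different. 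I would first rewrite $XY$ via Proposition~\ref{formmodh}-type reordering, or equivalently use the relations \eqref{quiver skein relations 1}, to show that each mixed term equals in $\na\otimes\na$ a term already accounted for modulo $\tilde B^2$; this would reuse the case analysis $C_1$--$C_6$ from the proof of Lemma~\ref{well}.

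Before committing to either route, I would check the mixed situation in the simplest case: $X$ a single arrow loop $a$, $Y$ a loop containing $a^*$, against Schedler's original verification \cite[Section~3.2]{Schedler}. The two-parameter weights $h,\hbar$ do not affect this, since $\langle-,-\rangle$ is multiplicative in the invariants above.
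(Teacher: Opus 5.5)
Your treatment of the non-mixed colorings is correct and is the same as the paper's. Restriction and gluing give a bijection between colorings of $XY$ with no mixed pair and $\lab_2(X)\times\lab_2(Y)$, with $\langle XY,c\rangle=\langle X,c|_X\rangle\langle Y,c|_Y\rangle$ and $(XY)^t_c=X^t_{c|_X}Y^t_{c|_Y}$.

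The gap is the mixed colorings. There you give no argument, only an expectation of an involution or a skein-relation fallback. The involution you sketch has no well-defined target in general. The pair $\{a,\phi(a)\}$ already consists of an arrow and its reverse, and further occurrences of these arrows in $X$ or $Y$ need not exist. Changing $|c_-|$ would also require changing heights, which a fixed representative does not allow. The skein fallback is unspecified, and in fact nothing needs to cancel.

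The missing idea is that mixed colorings do not exist when $Y$ sits above $X$. You already have the key local fact: if $a_{i,j}\in P_X$ and $\phi(a_{i,j})\in P_Y$, then $c(a_{i,j})=1$ and $c(\phi(a_{i,j}))=2$. Combine this with the coloring rule of Definition~\ref{color} applied to $\phi(a_{i,j})=a_{i',j'}\in I$, which gives $c(a_{i',j'})=c(a_{i,j+1})$.

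Now sum $c(a_{i,j+1})-c(a_{i,j})$ over all $a_{i,j}\in P_X$. The sum is $0$, since every component of $X$ is a cycle. The terms break down as follows:
\begin{itemize}
\item edges not in $I$ contribute $0$;
\item a pair $a,\phi(a)$ with both edges in $P_X$ contributes $\bigl(c(\phi(a))-c(a)\bigr)+\bigl(c(a)-c(\phi(a))\bigr)=0$;
\item each mixed edge contributes $c(\phi(a_{i,j}))-c(a_{i,j})=1$.
\end{itemize}
Hence the number of mixed edges is $0$, so every coloring of $XY$ is non-mixed. Your non-mixed computation then already gives $\Delta(XY)=\Delta(X)\Delta(Y)$ exactly on representatives.

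Your proposed test case would have shown this. For $X=(a,1)$ and $Y=(a^*,2)$ with $a$ a loop, the rule forces $c(a)=c(a^*)$, contradicting $c(a^*)>c(a)$. This is exactly the paper's argument, phrased there as $\sum_{a_{i,j}\in S_c}\bigl(c(a_{i,j})-c(a_{i,j+1})\bigr)$ being both $=0$ and $<0$.
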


\begin{proof}
By Definition~\ref{coproduct}, we compute 
\begin{align}\label{formula:coprodmorphism}
\Delta(XY)&=\sum_{c\in \lab_2(XY)}\langle XY,c\rangle(XY)_c^1\otimes(XY)_c^2
\end{align}
and
\begin{align*}
\Delta(X)\Delta(Y)&=\bigg(\sum_{d\in \lab_2(X)}\langle X,d\rangle X_d^1\otimes X_d^2\bigg)\bigg(\sum_{e\in \lab_2(Y)}\langle Y,e\rangle Y_e^1\otimes Y_e^2\bigg)\\
&=\sum_{\substack{d\in \lab_2(X)\\e\in \lab_2(Y)}}\langle X,e\rangle\langle Y,b\rangle X_d^1Y_e^1\otimes X_d^2Y_e^2.\end{align*} 
To compare $\Delta(XY)$ and $\Delta(X)\Delta(Y)$, we look into (\ref{formula:coprodmorphism}).
Fix $c\in \lab_2(XY).$
If $\phi(P_X\cap I)\cap P_Y = \varnothing$, or equivalently $\phi(P_Y\cap I)\cap P_X = \varnothing$, then for each $c = (I,\phi,c)\in \lab_2(XY)$, we have two colorings $(I\cap P_X,\phi|_{P_X},c|_{P_X\sqcup V_X})\in \lab_2(X)$ and $(I\cap P_Y,\phi|_{P_Y},c|_{P_Y\sqcup V_Y})\in \lab_2(Y).$
Notice $P_{XY} = P_{X}\sqcup P_Y$ and $V_{XY} = V_{X}\sqcup V_Y$.
Two colorings $(I_d,\phi_d,d)$ and $(I_e,\phi_e,e)$ defines a coloring $(I_{d}\sqcup I_{e},\phi_{d}\sqcup \phi_{e},d\sqcup e)\in \lab_2(XY)$. 
These establish a bijection
\[\lab_2(XY)' \to \lab_2(X) \times \lab_2(Y)\]
with $\lab_2(XY)' \coloneqq \{(I,\phi,c)\in \lab_2(XY)\mid \phi(P_X\cap I)\cap P_Y = \varnothing\}.$
Put $\lab_2(XY)''\coloneqq \lab_2(XY) - \lab_2(XY)'$. 
Notice $\langle X,d\rangle\langle Y,e\rangle =\langle XY,d\sqcup e\rangle$.
By (\ref{formula:coprodmorphism}), we have:
\[\begin{aligned}
\Delta(XY)
&=\sum_{\substack{d\in \lab_2(X)\\e\in \lab_2(Y)}}\langle X,d\rangle\langle Y,e\rangle X_d^1Y_e^1\otimes X_d^2Y_e^2+\sum_{c\in\lab_2(XY)''} \langle X,c\rangle(XY)_c^1\otimes (XY)^2_c
\\&=\Delta(X)\Delta(Y)+\sum_{c\in\lab_2(XY)''} \langle X,c\rangle(XY)_c^1\otimes (XY)^2_c.
\end{aligned}\] 

It suffices to prove $\lab_2(XY)'' = \varnothing.$
Assume conversely that there exists $a_{i,j}\in I\cap P_X$ such that $\phi(a_{i,j})\in P_Y$ for some $c\in \lab_2(XY).$
Put $S_{c}\coloneqq \{a_{i,j}\in I\cap P_X\mid \phi(a_{i,j})\in P_Y\}.$ 
We show that
\begin{equation}\label{SC}
    \sum_{a_{i,j}\in S_c}c(a_{i,j}) - c(a_{i,j+1})\text{ is both }=0\text{ and }<0,
\end{equation}
which implies $\lab_2(XY)''= \varnothing$ and the desired equation hence holds.
For the $<0$ part, notice $\lambda_{i,j}<\lambda_{i',j'}$.
We have $c(a_{i,j}) - c(a_{i,j+1}) = c(a_{i,j}) - c(a_{i',j'}^*) < 0$ for $a_{i,j}\in S_{c}$ and $a_{i',j'}^* = \phi(a_{i,j})$.
Hence, the $<0$ part follows.
As for the $=0$ part, because $X$ is given by cyclic paths, we have
\[\sum_{a_{i,j}\in P_X}c(a_{i,j}) - c(a_{i,j+1}) = 0.\]
On the other hand, because $c(a_{i,j}) = c(a_{i,j+1})$ if $a_{i,j}\notin I$, we have
\begin{align*}
\sum_{a_{i,j}\in P_X}c(a_{i,j}) - c(a_{i,j+1}) 
& =\sum_{\substack{a_{i,j}\in I\cap P_X\\
\phi(a_{i,j})\in P_X}}\bigl(c(a_{i,j})-
c(a_{i,j+1})\bigr)+\sum_{a_{i,j}\in 
S_c}\bigl(c(a_{i,j})
-c(a_{i,j+1})\bigr)\\
& = \sum_{a_{i,j}\in S_c}
c(a_{i,j})-c(a_{i,j+1}),
\end{align*}
where the second equation holds because for each 
$a_{i,j}\in I\cap P_X$ with $\phi(a_{i,j})\in P_X$, 
the equation $c(a_{i,j+1}) = c(a_{i',j'}^*)$ holds for 
$a_{i',j'}^* = \phi(a_{i,j})$ and the summmand 
$c(a_{i,j})-c(a_{i',j'}^*) + c(a_{i',j'}^*)-c(a_{i,j})$ 
is in the sum. 
Hence the $=0$ part holds.
\end{proof}

Summarizing the above Lemmas \ref{well}-\ref{bial},
we have:

\begin{theorem}\label{bi}
Let $\na$ be equipped with the coproduct $\Delta$ and 
the product $*$ as above. Then $\na$ is a bialgebra.
\end{theorem}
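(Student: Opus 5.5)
The plan is to assemble Theorem~\ref{bi} from Lemmas~\ref{well}, \ref{coasso} and \ref{bial}, and to supply the two pieces they leave out: the unit and the counit. The algebra side is already done. Construction~\ref{na construction} shows that $\tilde{A}[h,\hbar]$ is associative under stacking, since placing $Z$ above $Y$ above $X$ does not depend on bracketing once heights are shifted. It also shows that $\tilde{B}[h,\hbar]$ is a two-sided ideal. So $\na$ is an associative $k[h,\hbar]$-algebra, and its unit is the empty link $\varnothing$.

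On the coalgebra side, Lemma~\ref{well} shows that $\Delta$ descends from $\tilde{A}[h,\hbar]$ to $\na$, and Lemma~\ref{coasso} gives coassociativity. For the counit I take $\epsilon:=\Delta^{-1}$ from Definition~\ref{coproduct}: it sends $\varnothing\mapsto 1$ and every nonempty link to $0$. This is well defined on $\na$ because each generator of $\tilde{B}[h,\hbar]$ is a combination of nonempty links. Vertices count as components, so the single-component $X''$ in \eqref{quiver skein relations 1} is nonempty; in \eqref{quiver skein relations 2}, a degenerate component becomes a vertex, which is again nonempty.

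To prove the counit axiom $(\epsilon\otimes\id)\Delta=\id=(\id\otimes\epsilon)\Delta$, fix a link $X$ and list the $2$-colorings $c$ with $X^1_c=\varnothing$. Then every edge and vertex gets color $2$. The coloring rule requires $c(a)>c(\phi(a))$ exactly when $\lambda(a)>\lambda(\phi(a))$, and all heights are distinct, so $c(a)\ne c(\phi(a))$ for $a\in I$; hence $I=\varnothing$. Then $|c|=0$ and $\|c\|=0$, so $\langle X,c\rangle=1$ and $X^2_c=X$. The same argument applies on the other side.

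Compatibility of $\Delta$ with multiplication is Lemma~\ref{bial}. Compatibility with the unit is immediate: $\Delta(\varnothing)=\varnothing\otimes\varnothing$, because the empty link has only the trivial coloring. Compatibility of $\epsilon$ with the product holds because $XY=\varnothing$ exactly when $X=Y=\varnothing$.

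The only point needing care is that all these maps are defined on representatives in $\tilde{A}[h,\hbar]$, so each must respect both the height-reordering identification and the skein relations. For $\Delta$, the skein relations are the content of Lemma~\ref{well}; the reordering identification holds because colorings depend only on the relative order of heights. I expect this well-definedness to be the main obstacle, but it has already been handled in the earlier lemmas.
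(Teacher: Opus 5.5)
Your proposal is correct and takes the paper's route: the paper proves Theorem~\ref{bi} just by combining Lemmas~\ref{well}, \ref{coasso} and \ref{bial}, as you do. Your extra checks of the unit (the empty link) and the counit $\Delta^{-1}$ are sound, namely that $\Delta^{-1}$ vanishes on $\tilde{B}[h,\hbar]$ and that only the constant coloring with $I=\varnothing$ survives in the counit axiom; the paper leaves these implicit, since its stated definition of bialgebra does not explicitly require a unit or counit.
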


\section{The quantization  
$\mathbf{N}(Q)_{h,\hbar} 
\to
\varepsilon_\hbar(L_\hbar)$}\label{quancon}

In this section, we first consider a Poisson 
algebra $E(L_\hbar)$ that is isomorphic to 
$\varepsilon_\hbar(L_\hbar)$ (cf. 
\cite[Section~12]{Turaev}).
Then we construct a map $\mathcal{J}:\na \to 
E(L_\hbar)$ and show that the 
composition 
\begin{equation}
\label{eq:quantizationviaE}
\na \overset{\mathcal{J}}{\to} E(L_\hbar) 
\overset{\cong}{\to} 
\varepsilon_\hbar(L_\hbar)
\end{equation}
is a reduced quantization of 
$\varepsilon_\hbar(L_\hbar).$

\subsection{A Poisson bialgebra and its 
quantization}\label{sec:constructELhbar}

In this subsection, we first
construct $E(L_\hbar)$
and $\mathcal{J}$ in \eqref{eq:quantizationviaE}.
That \eqref{eq:quantizationviaE} is
a quantization is stated in
Theorem~\ref{quantization varepsilon},
whose proof is given in
the rest of this section.

\subsubsection{A Poisson algebra $E(L_\hbar)$}

For a Lie bialgebra $\la $ over $k[\hbar]$, set 
$\mathcal{ F} = \mathcal{ F}(\la):=\bigoplus_{n\geq 0 } \la ^{\otimes n}$
the tensor algebra generated by $\mathfrak a$ over
$k[\hbar]$.
It has a natural bialgebra 
structure given as follows: 
the coproduct is given by 
\[\textup{diag}:\mathcal{ F}\to \mathcal{ F}\otimes\mathcal{ F},
\quad a_1\cdots a_m
\mapsto \sum_{i=0}^m(a_1\cdots a_i)
\otimes( a_{i+1}\cdots a_m).\]

The product on $\mathcal{F}$, denoted by $\bullet$,
is given by the shuffle product: 
for $a = a_{1}\cdots a_{m}\in \la^{\otimes m}$ and $b = b_{1}\cdots b_{n}\in \la^{\otimes n}$, \begin{align}\label{shuffle}
&(a_1\cdots a_m)\bullet(b_1\cdots b_n) \coloneqq \sum_{\sigma\in \mathrm{Shuf}(m,n)} c_{\sigma,1}\cdots c_{\sigma,m+n}
\end{align}
with
$$\text{Shuf}(m,n):=\left\{\sigma\in S_{m+n}
\left|
\begin{array}{l}\sigma(1)<\sigma(2)<\cdots<\sigma(m)\textup{ and }\\
\sigma(m+1)<\cdots<\sigma(m+n)
\end{array}\right.\right\}$$
and $$c_{\sigma,i}:=
\begin{cases}
a_{\sigma^{-1}(i)},&\textup{if $\sigma^{-1}(i)\leq m$},\\
b_{\sigma^{-1}(i)-m},&\textup{otherwise.}
\end{cases}$$

\begin{lemma}
[{Turaev \cite[Lemma 12.2]{Turaev}}]\label{Lie}
For $\mathcal{F}$ defined as above, 
consider the bracket $\{-,-\}:\mathcal F\otimes\mathcal{F}\to \mathcal{ F}$ defined by
\[\{a_1 \cdots a_m,b_1 \cdots b_n\} = \sum_{\substack{1\leq i\leq m\\1\leq j\leq n}}(a_{1,i-1} \bullet b_{1,j-1}) \otimes [a_{i},b_{j}] \otimes (a_{i+1,m} \bullet b_{j+1,m}),\]
where $a_{i,i+j} := a_{i}\cdots a_{i+j}\in\mathcal{F}$ and $[a_{i},b_{j}]$ denotes the Lie bracket of $a_{i},b_{j}$ in $\la$.  
Then this bracket $\{-,-\}$ is a Poisson bracket so that $\mathcal{F}$ is a Poisson bialgebra.
\end{lemma}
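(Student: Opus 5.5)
Since this is Turaev's Lemma 12.2, I would reprove it directly from the shuffle structure of $\mathcal F$. The guiding observation is that the bracket is the unique extension of the Lie bracket of $\la$ that is a biderivation for $\bullet$ and a coderivation-compatible map for $\mathrm{diag}$. The verification then splits into four parts: skew-symmetry, the Leibniz rule with respect to $\bullet$, the Jacobi identity, and compatibility with $\mathrm{diag}$, i.e.\ $\mathrm{diag}\{a,b\}=\{\mathrm{diag}(a),\mathrm{diag}(b)\}$. Here the bracket on $\mathcal F\otimes\mathcal F$ is the componentwise one, with $\bullet$ used to multiply. That $(\mathcal F,\bullet,\mathrm{diag})$ is a commutative bialgebra is the standard shuffle-bialgebra fact, which I take as known.

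\emph{Skew-symmetry.} This follows at once from commutativity of $\bullet$ and skew-symmetry of $[-,-]$ in $\la$: swapping $a$ and $b$ swaps the two shuffle factors and negates the middle letter.

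\emph{Leibniz rule.} I would reinterpret the defining formula combinatorially. The word $\{a,b\}$ is the sum, over all shuffles of $a$ and $b$ in which a chosen letter $a_i$ sits adjacent to a chosen letter $b_j$, of the word obtained by merging that pair into the single letter $[a_i,b_j]$. In other words, one picks an index pair $(i,j)$, shuffles everything before it, then everything after it, and contracts. With this description, $\{a,b\bullet c\}$ is a sum over shuffles of $b$ and $c$ followed by a contraction of a letter of $a$ with a letter coming either from $b$ or from $c$. Splitting according to the origin of the contracted letter gives $\{a,b\}\bullet c+b\bullet\{a,c\}$. The key bijection is between "shuffle $b,c$, then shuffle with $a$ around a contracted pair" and "contract $a$ with $b$, then shuffle with $c$". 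Associativity of shuffles makes this a bijection on indexing data.

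\emph{Compatibility with $\mathrm{diag}$.} Deconcatenating a word $\{a,b\}$ cuts it either before or after the merged letter. A cut before it corresponds to splitting $a=a'a''$ and $b=b'b''$ with the contraction taking place between $a''$ and $b''$, which yields $(a'\bullet b')\otimes\{a'',b''\}$. A cut after it yields $\{a',b'\}\otimes(a''\bullet b'')$. Here I use that $\mathrm{diag}$ is multiplicative for $\bullet$. Summing the two kinds of cut reproduces $\{\mathrm{diag}(a),\mathrm{diag}(b)\}$.

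\emph{Jacobi identity.} I expect this to be the main obstacle. My plan is to exploit the Leibniz rule: a biderivation bracket on $(\mathcal F,\bullet)$ satisfies Jacobi as soon as its Jacobiator vanishes on a generating set. However, $(\mathcal F,\bullet)$ is generated by letters only over $\mathbb Q$ modulo Lyndon words, so this route is awkward. Instead I would prove Jacobi directly. The term $\{\{a,b\},c\}$ is a sum over "double contractions". In one kind, a letter of $c$ contracts with the merged letter $[a_i,b_j]$, which gives $[[a_i,b_j],c_k]$ shuffled with the rest. In the other kind, $c_k$ contracts with an untouched letter of $a$ or of $b$, producing two disjoint merged letters. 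The first kind cancels cyclically by the Jacobi identity in $\la$. The second kind cancels pairwise between $\{\{a,b\},c\}$ and $\{\{c,a\},b\}$: each configuration with two disjoint contracted pairs appears in both with opposite signs by skew-symmetry.

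The care needed is all in checking that the shuffle multiplicities match in these pairings. I would organize this by indexing every term by a "partial matching with ordering data", making the cancellation visibly bijective.
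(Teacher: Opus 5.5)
Your argument is correct. There is, however, no proof in the paper to compare it with: the paper states this lemma and cites Turaev without giving any argument, so yours is a self-contained replacement for a citation.

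Skew-symmetry goes through as you describe. So does the Leibniz rule, which rests on $(u\,x\,v)\bullet c=\sum_{c=c'c''}(u\bullet c')\,x\,(v\bullet c'')$ together with associativity of $\bullet$. The identity $\mathrm{diag}\{a,b\}=\{\mathrm{diag}\,a,\mathrm{diag}\,b\}$ also holds: split by cutting before or after the merged letter and use multiplicativity of $\mathrm{diag}$.

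Your direct proof of the Jacobi identity works once every term is indexed by a quasi-shuffle of $a,b,c$ with marked merged positions. Restricting such an arrangement to the two words of the inner bracket shows that it arises exactly once from each iterated bracket containing it, so the shuffle multiplicities match automatically.

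Two phrasings need tightening.
\begin{itemize}
\item ``Shuffles in which $a_i$ sits adjacent to $b_j$'' counts every term twice, once for $a_ib_j$ and once for $b_ja_i$. Your ``in other words'' description (pick $(i,j)$, shuffle before, shuffle after, contract) is the correct one.
\item The double-contraction terms pair according to which word carries both contracted letters. If that word is $a$, the partners lie in $\{\{a,b\},c\}$ and $\{\{c,a\},b\}$, as you say. If it is $b$, they lie in $\{\{a,b\},c\}$ and $\{\{b,c\},a\}$. If it is $c$, they lie in $\{\{b,c\},a\}$ and $\{\{c,a\},b\}$.
\end{itemize}

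You can also avoid the step you flagged as the main obstacle, using only what you have already proved.
\begin{itemize}
\item Give $\mathcal F\otimes\mathcal F$ the bracket $\{u\otimes u',v\otimes v'\}=\{u,v\}\otimes(u'\bullet v')+(u\bullet v)\otimes\{u',v'\}$. Skew-symmetry, commutativity and Leibniz alone make the cross terms of its Jacobiator cancel. Hence $J_{\otimes}=J\otimes m_3+m_3\otimes J$, where $m_3$ is the triple shuffle product.
\item Compatibility with $\mathrm{diag}$ then gives $\mathrm{diag}\,J(a,b,c)=\sum J(a',b',c')\otimes(a''\bullet b''\bullet c'')+(a'\bullet b'\bullet c')\otimes J(a'',b'',c'')$, summed over deconcatenations of $a$, $b$ and $c$.
\item Assume $J$ vanishes on all triples of smaller total length; the base case of three letters is the Jacobi identity of $\mathfrak a$. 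Since $J$ also vanishes when an argument is $1$, only $J(a,b,c)\otimes 1+1\otimes J(a,b,c)$ survives. So $J(a,b,c)$ is primitive for deconcatenation and therefore lies in $\mathfrak a$.
\item But $J(a,b,c)$ is homogeneous of length $|a|+|b|+|c|-2\ge 2$, so it vanishes.
\end{itemize}
This trades the matching of double contractions for a short induction. Your combinatorial version has the merit of using the same quasi-shuffle bookkeeping the paper relies on later: the shuffle decomposition in the proof that $\mathcal J$ is multiplicative, and the bijection for the primitive part in Step 4.
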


For an element $a\in \mathcal F$,
suppose 
$a=\prescript{0}{}{a} + \prescript{1}{}{a} + \cdots$ with $\prescript{n}{}{a}\in \mathfrak{a}^{\otimes n}$ for $n\geq 0$.
Given an integer $n$, let $\sigma_i$ and $\nu_{i}$ denote the operators 
such that 
\begin{align*}
\sigma_i (a_1\cdots a_n) 
& = a_{1,i-1}\otimes a_{i+1}\otimes 
a_{i}\otimes a_{i+2,n},\\
\nu_i (a_1\cdots a_{n-1}) 
& = a_{1,i-1}\otimes \nu(a_{i})\otimes a_{i+1,n-1}.
\end{align*}
Set\[E(\mathfrak a) \coloneqq \{a\in\mathcal{F}\mid 
\prescript{n}{}{a} - \sigma_i(\prescript{n}{}{a}) 
= \hbar \nu_i(\prescript{n-1}{}{a})\text{ for all }n>i\geq 1\}
\subset \mathcal{F},\]which is a $k$-submodule of $\mathcal{F}$. 
For general spiral Lie algebras, we have

\begin{theorem}[{Turaev \cite[Lemma 12.4]{Turaev}}]\label{symmisomorphism}
For a spiral Lie bialgebra $\la$ over $k[\hbar]$, the submodule $E(\la)$ is a Poisson 
subbialgebra of $\mathcal{F}$ with the Lie bracket in Lemma~\text{\rm{\ref{Lie}}}. Moreover, the map  
\begin{equation}
\mathrm{symm}|_{E(\la)}:
E(\la )\to S(\la ), \quad 
a_1\cdots a_m\mapsto\frac{1}
{m!}\prod_{i=1}^m a_i,
\end{equation}
is an algebra morphism,  which transforms the comultiplication in $E(\la )$ into the comultiplication $\nabla_\hbar$ in $S(\la)$ as in Definition~\text{\rm{\ref{def:epsilon(g)}}}.
\end{theorem}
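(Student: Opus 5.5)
We prove the three assertions in turn: $E(\la)$ is closed under the shuffle product, the coproduct $\diag$ and the bracket; $\symm|_{E(\la)}$ is multiplicative; and $\symm$ intertwines $\diag$ with $\nabla_\hbar$. Throughout we use the defining condition ${}^n a-\sigma_i({}^n a)=\hbar\,\nu_i({}^{n-1}a)$, which says that a transposition of adjacent letters costs exactly one application of $\hbar\nu$ at that slot. Spiralness of $\la$ guarantees that iterating this rule terminates.

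For closure under $\bullet$, fix $a,b\in E(\la)$ and a slot $i$ in a shuffle word of degree $n$. We split the shuffles $\sigma\in\mathrm{Shuf}(m,n-m)$ according to whether positions $i,i+1$ carry
(a) two letters of $a$,
(b) two letters of $b$, or
(c) one letter of each.
In case (c), the shuffles pair off under swapping those two positions, so the contributions cancel in ${}^n(a\bullet b)-\sigma_i({}^n(a\bullet b))$. In cases (a) and (b), the defining relation for $a$ (resp. $b$) applies at the corresponding inner slot. Reassembling gives $\hbar\nu_i$ applied to the degree $n-1$ part of $a\bullet b$, because applying $\nu$ to a single letter commutes with shuffling in the obvious way.

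For $\diag$, a slot $i$ either lies inside one tensor factor, where the relation is inherited, or straddles the cut. This second case needs care. We use that $E(\la)$ is defined componentwise, and note that the component of $\diag$ landing in $\la^{\otimes p}\otimes\la^{\otimes q}$ only involves interior slots of each factor, so the straddling case imposes no condition.

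For the bracket, the idea is to use the Leibniz rule together with the cocycle condition $\nu[x,y]=x\nu(y)+\nu(x)y$. This is needed exactly when the slot $i$ touches the inserted letter $[a_i,b_j]$.

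For multiplicativity of $\symm$, we use that $\symm$ sends a word to the symmetrized product divided by $m!$. The shuffle of words maps to the commutative product, since $\binom{m+n}{m}/(m+n)!=1/(m!\,n!)$. This gives $\symm(a\bullet b)=\symm(a)\symm(b)$ for arbitrary $a,b\in\mathcal F$, not just for elements of $E(\la)$.

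The main obstacle is the coproduct compatibility $(\symm\otimes\symm)\circ\diag=\nabla_\hbar\circ\symm$ on $E(\la)$. Our plan has three steps.
1. Show that $\symm|_{E(\la)}$ is a bijection onto $S(\la)$. Given $s\in S^n(\la)$, the top component is forced to be its symmetric tensor, and the lower components are then determined recursively via the relations. This uses spiralness so that finitely many corrections suffice.
2. Compare both sides as functions on $\la^*\times\la^*$, i.e. pair with $x^{\otimes p}\otimes y^{\otimes q}$. The element of $E(\la)$ lifting a generator $a\in\la$ is then evaluated against the Campbell--Hausdorff product. The relations for $E$ encode $xy-yx=\hbar[x,y]$ dually, so an element of $E(\la)$ pairs with words in $x,y$ as a function on the group with product $\mu_\hbar$.
3. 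The coproduct $\diag$ then corresponds to evaluating at $\mu_\hbar(x,y)$, which is exactly $\nabla_\hbar$.

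Making step 2 rigorous requires identifying $E(\la)$ with the (restricted) dual of the universal enveloping algebra $U_\hbar(\la^*)$, with the relation $xy-yx=\hbar[x,y]$. Under this identification, the PBW symmetrization gives $\symm$ and the coproduct of $U$ dualizes to $\diag$. This identification is the step we expect to be hardest.
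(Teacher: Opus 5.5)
The paper gives no proof of this statement; it is quoted from Turaev. So I judge your outline on its own terms.

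\textbf{What is fine.} Write $E=E(\mathfrak a)$.
\begin{itemize}
\item Closure of $E$ under $\bullet$ and under $\mathrm{diag}$ works as you argue. For $\mathrm{diag}$ you also need $(E\otimes\mathcal F)\cap(\mathcal F\otimes E)=E\otimes E$. This holds for $\mathfrak a=L_\hbar$, since $\mathcal F/E$ is then torsion-free over $k[\hbar]$.
\item Multiplicativity of $\mathrm{symm}$ holds on all of $\mathcal F$, as you say.
\item Your bracket sketch names the right mechanism. Terms with a letter of $a$ just before the inserted $[a_r,b_s]$ pair, under $\sigma_i$, with terms having $[a_{r-1},b_s]$ followed by a letter of $a$. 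The $E$-relation of $a$ at slot $r-1$ then leaves $\hbar$ times $-\,y\cdot\nu(x)$ in the bracket slot, where $x$ is the letter of $a$ and $y$ that of $b$. The analogous $b$-terms leave $\hbar\,x\cdot\nu(y)$. By the cocycle condition these sum to $\hbar\,\nu([x,y])$, as required.
\item Your overall strategy for the coproduct, duality with $U_\hbar(\mathfrak a^*)$ plus Campbell--Hausdorff, is sound.
\end{itemize}

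\textbf{The gap: Step 1 is backwards.} Your plan needs Step 1 to reduce the coproduct identity to lifts of generators, and as written it would fail.
\begin{itemize}
\item If $a\in E$ and $\mathrm{symm}(a)=s\in S^n(\mathfrak a)$, the relations force ${}^{m}a=0$ for $m<n$ and ${}^{n}a$ symmetric. So degree $n$ is the \emph{lowest} component, not the top one.
\item The relations then prescribe ${}^{m}a-\sigma_i({}^{m}a)$ for $m=n+1,n+2,\dots$ from ${}^{m-1}a$. For example, the lift of $\alpha\in\mathfrak a$ is $\alpha+\tfrac{\hbar}{2}\nu(\alpha)+\cdots$, with nonzero terms in degrees $\geq 3$ in general.
\item Spiralness is what makes this \emph{upward} recursion stop. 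A downward recursion would need no spiralness at all.
\item Existence at each stage is not automatic: the prescribed differences for $1\le i<m$ must be realized simultaneously by a single tensor ${}^{m}a$. This is exactly the dual of the PBW theorem for $U_\hbar(\mathfrak a^*)$, and your recursion does not address it.
\end{itemize}

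\textbf{A second error: the duality is misstated.} $\mathrm{diag}$ (deconcatenation) is dual to the \emph{product} of $U_\hbar(\mathfrak a^*)$, i.e.\ to concatenation. The coproduct of $U_\hbar(\mathfrak a^*)$ is dual to $\bullet$. Your Step 3 in fact uses the product, via $e^xe^y$.

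\textbf{How to repair it without Step 1.} Identifying $E$ with the functionals on $T(\mathfrak a^*)$ that kill the ideal generated by $xy-yx-\hbar[x,y]$ is immediate from the definition: pair the relation with $u\otimes x\otimes y\otimes v$. So that is not the hard step. The real inputs are:
\begin{enumerate}
\item[(i)] $\mathrm{symm}(a)(x)=\langle a,e^{x}\rangle$ with $e^{x}=\sum_m x^{\otimes m}/m!$; this is exactly what the factor $1/m!$ encodes.
\item[(ii)] $\langle\mathrm{diag}(a),e^{x}\otimes e^{y}\rangle=\langle a,e^{x}e^{y}\rangle$.
\item[(iii)] For $a\in E$, $\langle a,e^{x}e^{y}\rangle=\langle a,e^{\mu_\hbar(x,y)}\rangle$, by Campbell--Hausdorff modulo that ideal. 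Spiralness makes $\mu_\hbar(x,y)$ well defined against the finitely many letters of $a$.
\end{enumerate}
Together these give $(\mathrm{symm}\otimes\mathrm{symm})\circ\mathrm{diag}=\nabla_\hbar\circ\mathrm{symm}$ on all of $E$ at once. PBW is then needed only for the bijectivity of $\mathrm{symm}|_{E}$, which the paper uses afterwards.
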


\begin{corollary}\label{E}
Let $\varepsilon_\hbar(L_\hbar)$ be given as in 
Definition~\text{\rm{\ref{def:epsilon(g)}}}. Then  
the map $\symm|_{E(L_\hbar)}$ is an isomorphism of Poisson 
bialgebras from $E(L_\hbar)$ to $\varepsilon_\hbar(L_\hbar)$.
\end{corollary}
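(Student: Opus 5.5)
The plan is to deduce the corollary directly from Theorem~\ref{symmisomorphism}, applied with $\la=L_\hbar$. Two things must be checked. First, $L_\hbar$ is a spiral Lie bialgebra over $k[\hbar]$. Second, $\symm|_{E(L_\hbar)}$ is not only a morphism but a bijection that also respects the Poisson brackets.

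Spiralness is immediate from Example~\ref{epsilon}. The cobracket on $L_\hbar=k[\hbar]\otimes L$ is the $k[\hbar]$-linear extension of $\nu$, and $\nu$ strictly decreases total length. Hence $\nu^N$ kills every element of bounded length, and every element of $L_\hbar$ is a finite sum of such. So Theorem~\ref{symmisomorphism} applies. It tells us that $E(L_\hbar)$ is a Poisson subbialgebra of $\mathcal F$. It also tells us that $\symm|_{E(L_\hbar)}$ is an algebra map intertwining the coproduct of $E(L_\hbar)$ with $\nabla_\hbar$.

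For bijectivity I would argue degree by degree. Filter $\mathcal F$ by tensor degree. The defining condition of $E$ says that $\prescript{n}{}{a}$ is symmetric modulo terms $\hbar\nu_i(\prescript{n-1}{}{a})$ determined by the lower component. I would construct an explicit inverse inductively. Given $s\in S^n(L_\hbar)$, start from its symmetrization $n!\,\tilde s\in L_\hbar^{\otimes n}$. Then add successively lower-degree corrections, so that the recursion $\prescript{m}{}{a}-\sigma_i\prescript{m}{}{a}=\hbar\nu_i(\prescript{m-1}{}{a})$ holds.

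Here is the subtle point. The recursion determines the antisymmetric parts of the higher components from the lower ones, not the other way around. So the natural approach is to solve upward: choose $\prescript{m}{}{a}$ for $m>n$, with its symmetric part zero and its nonsymmetric part forced. Solvability of this needs the compatibility supplied by co-Jacobi, which is exactly Turaev's construction underlying \cite[\S12]{Turaev}. Spiralness ensures the process terminates, since $\nu$ lowers length and so the corrections vanish after finitely many steps, giving an element of $\mathcal F$. Injectivity follows because an element of $E$ with zero symmetrization would have its top component symmetric with zero symmetrization, hence zero, and one descends.

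Finally, Poisson compatibility. By Theorem~11.4, the bracket on $\varepsilon_\hbar(L_\hbar)$ is the unique extension of $[-,-]$ making it a Poisson bialgebra with the stated congruence. The transported bracket from $E(L_\hbar)$ via the bialgebra isomorphism $\symm$ is a Poisson bialgebra bracket. On generators $a,b\in L_\hbar$ it equals $[a,b]$ by Lemma~\ref{Lie}, up to higher symmetric corrections. Uniqueness then forces equality.

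The main obstacle is the bijectivity step, specifically the upward solvability of the recursion defining $E$. I would handle it by citing Turaev's proof, which works for any spiral Lie bialgebra, rather than redoing it.
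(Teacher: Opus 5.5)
Your plan follows the paper's route. The paper gives no separate proof of this corollary: it is the specialization of Turaev's Lemma~12.4 (Theorem~\ref{symmisomorphism}) to $\mathfrak a=L_\hbar$, which is spiral by Example~\ref{epsilon}. Bijectivity comes from Turaev's \S12, exactly as you propose to cite it. The Poisson compatibility holds because the bracket of Turaev's Theorem~11.4 is the one transported through $\mathrm{symm}$ (by Turaev's construction, or equivalently by your uniqueness argument).

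Your optional injectivity sketch runs in the wrong direction. For $a\in E(L_\hbar)$, it is the \emph{lowest} nonzero component ${}^{m_0}a$ that is forced to be symmetric, because ${}^{m_0-1}a=0$ gives $(1-\sigma_i)\,{}^{m_0}a=0$. The top component need not be symmetric: the preimage of $x\in L_\hbar$ is $x+\tfrac{\hbar}{2}\nu(x)+\cdots$, whose degree-two part is antisymmetric. So that induction must run upward from the lowest degree.

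In the uniqueness step, "higher symmetric corrections" is not quite enough. You need the corrections to $[a,b]$ to lie in $\hbar\bigoplus_{n\geq 2}S^n(L_\hbar)$. This does hold, because the higher components of $\mathrm{symm}^{-1}(a)$ are divisible by $\hbar$.
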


\subsubsection{The quantization map $\mathcal{J}$ of $E(L_\hbar)$}\label{sec:mapJ}

We are to construct a map $\mathcal{J}:\na\to E(L_\hbar)$.
Let $X$ be an element in $\na$ of the form \eqref{form}.
If $X$ has exactly one component, let $\langle X\rangle $ 
denote the underlying path of $X$, i.e., 
\[\text{if $X = (a_{1},\lambda_{1})\cdots (a_{l},\lambda_{l})$, 
then put $\langle X\rangle \coloneqq a_{1}\ldots a_{l}$; if $X = v$, 
then put $\langle X\rangle = v$}.\]
For the natural projection $\pi:k[h,\hbar] \to k[\hbar]$, 
consider a $\pi$-linear map $q:\na\to L_\hbar$ defined below
\[q(X)=\begin{cases}
\langle X\rangle,
 &\text{if $X$ has only one component,} \\
0,&\text{otherwise}.
\end{cases}\] 
Here the $\pi$-linearity means $q(rX) = \pi(r)q(X)$ for any $r\in k[h,\hbar]$ and $X\in\na$.

The map $q$ is well-defined:
regarding the relations (\ref{quiver skein relations 1}) 
and (\ref{quiver skein relations 2}), the equations 
$q(X) = q(X_{i,j,i',j'}')$, $q(hX_{i,j,i',j'}'')=0$ 
and $q(\hbar X_{i,j,i,j'}'')=0$ always hold.
For $\la=k[\hbar]\otimes L=L_\hbar$, let $\mathcal{F}$ 
denote the tensor algebra $\mathcal{F}(\mathfrak{a})$ in 
Section~\ref{sec:constructELhbar}, set the 
map\begin{equation}\label{j}
\mathcal{J}:=\bigoplus_{n\geq 0}q^{\otimes n}\circ
\Delta^{n-1}:\mathbf N(Q)_{h,\hbar}\to \mathcal{ F}.
\end{equation}  
The rest of this section is devoted to showing 

\begin{theorem}\label{quantization varepsilon}
The map $p_\hbar=\mathrm{symm}|_{E(L_\hbar)}\circ \mathcal{J}$ 
is a reduced quantization of $\varepsilon_\hbar(L_\hbar).$
\end{theorem}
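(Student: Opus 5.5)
Since $\mathrm{symm}|_{E(L_\hbar)}$ is an isomorphism of Poisson bialgebras by Corollary~\ref{E}, it suffices to show that $\mathcal{J}:\na\to E(L_\hbar)$ is a reduced quantization in the sense of Definition~\ref{2.7}, over $\pi:k[h,\hbar]\to k[\hbar]$ with parameter $h$. I will check five things in order: (i) $\mathcal{J}$ lands in $E(L_\hbar)$; (ii) $\mathcal{J}$ is a $\pi$-linear ring homomorphism; (iii) $\mathcal{J}$ is surjective; (iv) $\ker\mathcal{J}=h\na$; (v) the commutator condition \eqref{formula:quantization}.

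\textbf{Step (i).} I will take a knot or link $X$ whose heights are ordered as in Proposition~\ref{formmodh}(2). The $n$-th component of $\mathcal{J}(X)$ is $q^{\otimes n}\Delta^{n-1}(X)$. Since $q$ kills $h$ and every multi-component element, only colorings with $\deg_h\langle X,c\rangle=0$ survive. For these colorings each color class is a single knot. Applying $\sigma_i$ amounts to exchanging the colors $i$ and $i+1$. The colorings that are not symmetric under this exchange are exactly those with some cut pair $a,a^*$ colored $\{i,i+1\}$; the height condition in Definition~\ref{color} decides which order is allowed. Merging colors $i$ and $i+1$ through coassociativity (Lemma~\ref{coasso}) turns this difference into $\hbar\,\nu_i$ applied to the $(n-1)$-st component. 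The reason is that a single cut that splits one knot into two carries weight $\hbar^1$, as shown by the degree count in the Remark following Definition~\ref{coproduct}, and it matches \eqref{formula:cobracket} term by term, including the sign $(-1)^{|c_-|}$ that produces the wedge. This is the combinatorial heart of the argument, and I expect it to be the main obstacle: the sign and orientation conventions must be matched carefully.

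\textbf{Step (ii).} Multiplicativity with respect to the shuffle product $\bullet$ should follow from Lemma~\ref{bial}. For $X\ast Y$, the proof of that lemma shows that every coloring splits into a coloring of $X$ and a coloring of $Y$. After applying $q^{\otimes n}$, each color class must be a single knot, so it comes either from $X$ or from $Y$. Summing over these choices is exactly the shuffle sum \eqref{shuffle}. $\pi$-linearity is immediate from the $\pi$-linearity of $q$.

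\textbf{Steps (iii) and (iv).} For surjectivity I will argue by induction on the filtration by tensor length, as Turaev does. An element of $E$ is determined, modulo lower-degree terms, by its symmetric top component, and for necklaces $\alpha_1,\dots,\alpha_m$ the top component of $\mathcal{J}(\alpha_1\ast\cdots\ast\alpha_m)$ is the symmetrization of $\alpha_1\otimes\cdots\otimes\alpha_m$. For injectivity modulo $h$ I will use Proposition~\ref{formmodh}(2): $\na/h\na$ is spanned over $k[\hbar]$ by ordered product links. I will then compare these with $S(L_\hbar)$ via the top-degree symbol. The leading terms are linearly independent because product links of distinct multisets of necklaces give distinct monomials, which yields $\ker\mathcal{J}=h\na$. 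The point I will need to justify carefully is that the spanning set modulo $h$ is essentially a basis.

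\textbf{Step (v).} By Proposition~\ref{formmodh}, both $X\ast Y$ and $Y\ast X$ reduce to the same ordered product link. Reordering uses the relation \eqref{quiver skein relations 1}, so the first-order correction is $h$ times a sum of $X''$ terms. Applying $\mathcal{J}$ and then $\mathrm{symm}$ to these terms produces the bracket of Theorem~\ref{L} extended by the Leibniz rule, which agrees with $\{-,-\}$ on $\varepsilon_\hbar(L_\hbar)$ modulo $\hbar$-corrections. I will check this correspondence carefully on knots $X,Y$ and then extend to all elements by multiplicativity.
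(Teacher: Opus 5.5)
Your steps (i) and (ii) are sound. Step (i) in fact supplies a point the paper leaves implicit: Proposition~\ref{surj}, and the formula $p_\hbar=\mathrm{symm}|_{E(L_\hbar)}\circ\mathcal{J}$ itself, presuppose $\mathcal{J}(\na)\subseteq E(L_\hbar)$. The cleanest form of your argument is the identity $q^{\otimes 2}\circ(\Delta-\perm\circ\Delta)=\hbar\,\nu\circ q$ on $\na$, combined with coassociativity. Only one-cut colorings of a knot contribute to this identity, and for two-component links the surviving terms are symmetric.

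The genuine gap is step (v). Since $X\ast Y-Y\ast X=h\sum_{i,j}\varepsilon_iN^{i,j}$ holds exactly, condition \eqref{formula:quantization} for knots asks for
\[
\mathcal{J}\Bigl(\sum_{i,j}\varepsilon_iN^{i,j}\Bigr)=\{\mathcal{J}(X),\mathcal{J}(Y)\}
\]
exactly in $E(L_\hbar)$. Here $\hbar$ is not set to zero, and the bracket is Turaev's bracket of Lemma~\ref{Lie}; transported to $\varepsilon_\hbar(L_\hbar)$, it agrees with the Leibniz extension of the necklace bracket only modulo $\hbar\bigoplus_{n\geq 2}S^n$. Your plan controls only the mod-$\hbar$ part and leaves every $\hbar$-correction unverified, which is where the content of the theorem lies.

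Concretely, expand $\mathcal{J}(N^{i,j})$ over $\lab^0_p(N^{i,j})$. The colorings that cut only at self-crossings of $X$ or of $Y$ correspond, via shuffles, to the terms of $\{\mathcal{J}(X),\mathcal{J}(Y)\}$. But there are further colorings that cut $N^{i,j}$ at one of the other $X$--$Y$ crossing pairs $(e_{i'},e^*_{i',j'})$, and these have no counterpart in the bracket. They must cancel in the signed sum over $(i,j)$. For fixed cutting data $(I,\phi)$, the admissible colorings of $N^{r_i,j}$ are the orders of a directed graph $\Gamma_i$. This graph is obtained from one with a unique directed cycle by deleting one cycle edge and reversing the preceding ones. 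The signs alternate in $i$, and the vanishing is Turaev's Lemma~\ref{graph}. Without this primitive/non-primitive splitting and the cancellation lemma, step (v) does not prove the theorem.

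Steps (iii) and (iv) are repairable but misstated. In $E(L_\hbar)$ it is the \emph{lowest} tensor component that is symmetric, not the top one. In Example~\ref{ex:Jordan}, $\mathcal{J}(Y)=b^*ba^*a^*-\hbar\,a^*a^*\otimes v_2$ has a non-symmetric top component. Likewise, $p_\hbar(X_{\alpha_1}\ast\cdots\ast X_{\alpha_m})$ equals $\alpha_1\cdots\alpha_m$ plus $\hbar$-terms of higher polynomial degree and smaller arrow length. So the monomial is the lowest-degree symbol, not the top-degree one. An induction that then moves upward in degree needs a terminating measure, such as arrow length or the number of self-crossings; this is what Lemma~\ref{map} uses.

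In (iv), ordered product links are not linearly independent modulo $h$, because different starting points give different ordered lifts of one necklace. In the Jordan example, $(a,1)(a^*,2)-(a,2)(a^*,1)=\hbar\,v_1\&v_1$. You must first show that one fixed lift per monomial spans $\na/h\na$ over $k[\hbar]$, again by induction on self-crossings. After that, independence of the lowest-degree symbols gives injectivity of $\widetilde p_\hbar$. This spanning-plus-triangularity route is a legitimate alternative to the paper's map $\xi$ with $\xi\circ\widetilde p_\hbar-\id$ locally nilpotent (Lemma~\ref{xiq-1=0}). It rests on the same crossing-count induction.
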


\noindent See the end of Section~\ref{section:JisaquantizationofELhbar} 
for the proof for the quantization part.
The reducedness will be proved in Section~\ref{reducon}.

\subsection{Proof of quantization, part 1}\label{sec:quantizationpart1}

We first show that the map $\mathcal{J}$ is a bialgebra morphism.
We then check all remaining conditions for quantization
(Definition~\ref{2.7}), except for (\ref{formula:quantization}), 
which we prove in Section~\ref{sec:quantizationpart2}.

\subsubsection{$\mathcal{J}$ is a bialgebra morphism}

\begin{notation}\label{0}
For the set of $n$-coloring on $X$, let $\lab_n(X)$ be 
introduced by Definition \ref{color}. 
Set 
\[\lab_n^0(X):=\left\{(I,\phi,c)\in\lab_n(X)
\left|
\begin{array}{l}|c|/2=-||c||\neq 0,\text{ and}\\
\textup{every $X_c^i$ is a non-empty knot}
\end{array}\right.\right\}.\] 
\end{notation} 
\noindent The condition for $(I,\phi,c)$ means 
that each $X_c^i$ for $i = 1,\cdots, n$ in $\Delta^{n-1}$ 
has exactly one component. This allows us to rewrite 
$\mathcal{ J}(X)$ for $X $ of the form \eqref{form} as
\begin{equation}\label{buhuizaiyong}
\mathcal{ J}(X)=\sum_{n\geq 0}\bigg(\sum_{(I,\phi,c)\in 
\lab_n^0(X)}\langle X,c\rangle\langle 
X_c^1\rangle\otimes\cdots\otimes\langle X_c^n\rangle\bigg).
\end{equation}

\begin{lemma}\label{buhuizaiyong1} 
The map $\mathcal J$ is an algebra morphism.
\end{lemma}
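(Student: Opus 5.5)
We must show $\mathcal J(X*Y)=\mathcal J(X)\bullet\mathcal J(Y)$, where $\bullet$ is the shuffle product on $\mathcal F$. The strategy is to write $\mathcal J=\bigoplus_n q^{\otimes n}\circ\Delta^{n-1}$ and use two facts. First, $\Delta^{n-1}$ is multiplicative. This follows from Lemma~\ref{bial} and coassociativity (Lemma~\ref{coasso}), by induction on $n$:
\[
\Delta^{n-1}(XY)=\Delta^{n-1}(X)\,\Delta^{n-1}(Y).
\]
Second, we need a description of how $q^{\otimes n}$ acts on a product of tensors. Since $q$ is nonzero only on single-component elements (knots or single vertices), the plan is to analyse $q^{\otimes n}\big((X_1\otimes\cdots\otimes X_n)(Y_1\otimes\cdots\otimes Y_n)\big)=\bigotimes_i q(X_iY_i)$.

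The key observation is as follows. The product $X_iY_i$ is a link whose number of components is $|X_i|+|Y_i|$. By the definition of $q$ together with the convention that the empty link is the unit $1$, the factor $q(X_iY_i)$ is nonzero only when exactly one of $X_i,Y_i$ is a single component and the other equals $1$. Hmm — but $q(1)$: the empty link has zero components, so $q(1)=0$, while $\mathcal J$ in degree $0$ uses $\Delta^{-1}$. I will treat the degree-$0$ part separately, via the counit convention, and work with the formula \eqref{buhuizaiyong} in terms of $\lab_n^0$. There all tensor factors are non-empty knots, so the tensor factors in which $X$ and $Y$ both contribute are automatically excluded.

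Concretely, I will work with \eqref{buhuizaiyong}. I will set up a bijection between $\lab_n^0(XY)$ and the set of triples $(\sigma,d,e)$ with $d\in\lab_m^0(X)$, $e\in\lab_{n-m}^0(Y)$, and $\sigma\in\mathrm{Shuf}(m,n-m)$. The steps are:
\begin{enumerate}
\item Show, exactly as in the proof of Lemma~\ref{bial} (the summation argument \eqref{SC} with colors $1,\dots,n$), that any $n$-coloring of $XY$ has $\phi$ not pairing edges of $X$ with edges of $Y$. Hence the coloring restricts to colorings of $X$ and of $Y$.
\item Given $c\in\lab_n^0(XY)$, note that each $(XY)^t_c$ is a single knot built only from $X$-edges or only from $Y$-edges. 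The set of colors used by $X$, listed in increasing order, together with those used by $Y$, gives the shuffle $\sigma$. Relabelling the colors order-preservingly yields $d\in\lab_m^0(X)$ and $e\in\lab^0_{n-m}(Y)$. One must check that the height condition ``$c(a)>c(\phi(a))$ iff $\lambda(a)>\lambda(\phi(a))$'' is preserved under the order-preserving relabelling; it is, because it only compares colors within $X$ or within $Y$.
\item Conversely, any such triple $(\sigma,d,e)$ defines a coloring of $XY$ by interleaving the colors according to $\sigma$. This is a valid coloring since no pairs cross between $X$ and $Y$. The conditions $|c|/2=-\|c\|$ and ``each factor is a single knot'' hold because they hold for $d$ and $e$.
\item Check that the coefficient is multiplicative, $\langle XY,c\rangle=\langle X,d\rangle\langle Y,e\rangle$, since $|c|$, $\|c\|$ and $c_-$ are all additive over the decomposition. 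Also check that the tensor $\langle (XY)^1_c\rangle\otimes\cdots\otimes\langle (XY)^n_c\rangle$ is precisely the shuffle term $c_{\sigma,1}\cdots c_{\sigma,n}$ of $\langle X^1_d\rangle\cdots\bullet\langle Y^1_e\rangle\cdots$.
\end{enumerate}
Summing over the bijection then gives the identity $\mathcal J(XY)=\mathcal J(X)\bullet\mathcal J(Y)$ degree by degree. The $n=0$ term, in which $\Delta^{-1}$ picks out the empty link, is compatible with the unit $1\in\mathfrak a^{\otimes0}$. Finally, $k[h,\hbar]$-linearity together with the $\pi$-linearity of $q$ handles scalars.

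The main obstacle is step 1 for general $n$, together with the precise meaning of ``non-empty knot'' in $\lab^0_n$. It must be verified that the cyclic-sum argument of Lemma~\ref{bial} shows that no cross-pairing exists for $n$ colors. The $<0$ part of that argument uses only that every $Y$-height exceeds every $X$-height, so it should go through verbatim. Care is also needed with vertex components $v$ appearing as factors, which must be included among the knots matched by $\sigma$.
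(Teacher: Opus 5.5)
Your proposal is correct and follows essentially the same route as the paper: expand via \eqref{buhuizaiyong} over $\lab_n^0(XY)$, rule out pairings between edges of $X$ and edges of $Y$ by the cyclic-sum argument of Lemma~\ref{bial} (which does go through verbatim for $n$ colors), use that $\langle XY,c\rangle$ factors as $\langle X,d\rangle\langle Y,e\rangle$, and observe that the resulting tensor is a shuffle of the factors coming from $X$ and from $Y$. Your explicit order-preserving relabelling of colors, and the resulting bijection with triples $(\sigma,d,e)$, make precise the step that the paper only asserts, namely that $(XY)^1_c\otimes\cdots\otimes(XY)^n_c$ ``is actually a shuffle.''
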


\begin{proof}
For $X,Y\in \na$,  we want to prove $\mathcal{J}
(XY)=\mathcal{J}(X)\bullet\mathcal{J}(Y)$. 
By definition \eqref{j} of $\mathcal{J}$, we only prove 
that \[ q^{\otimes n}\circ 
\Delta^{n-1}(XY)=\sum_{r=1}^n (q^{\otimes 
r}\circ\Delta^{r-1}(X))\bullet 
(q^{\otimes n-r}\circ\Delta^{n-r-1}(Y)).\]
By 
\eqref{buhuizaiyong}, we have
\[
\begin{aligned}
q^{\otimes n}\circ \Delta^{n-1}(XY)&= q^{\otimes 
n}\bigg(\sum_{c\in \lab_n^0(XY)}\langle XY,c\rangle 
(XY)^{1}_c\otimes\cdots\otimes
(XY)^{n}_c\bigg).\end{aligned}
\]
Similarly 
to Lemma~\ref{bial}, one may show that there is no 
coloring $c=(I,\phi,c)\in \lab_n^0(XY)$ such that 
$\phi(x)\in P_Y$ for $x\in I\cap P_X$.
We also have $\langle XY,c\rangle
=\langle X,c|_{P_X\sqcup V_X }\rangle\langle 
Y,c|_{P_Y\sqcup V_Y }\rangle$ for $c\in \lab_n(XY)$.
By the definition of $\lab_n^0(XY),$ each $(XY)_c^i$ either equals $X_{c}^i$, or equals $Y_{c}^i$, where $X_c^i$ is in $\Delta(X)$ given by the color $(I\cap P_X,\phi|_{P_X},c|_{P_X\cup V_X})$ and $Y_c^i$ is in $\Delta(Y)$ given similarly.
Hence $(XY)^{1}_c\otimes\cdots\otimes (XY)^{n}_c\ $ is actually a shuffle of $X_c^i$ and $Y_c^i$.

We compute
\begin{eqnarray*}
&& q^{\otimes n}\circ \Delta^{n-1}(XY)\\
&=& q^{\otimes n}\Big(\sum_{c\in \lab_n^0(XY)}\langle 
XY,c\rangle (XY)^{1}_c\otimes\cdots\otimes 
(XY)^{n}_c\Big)\\
&=& q^{\otimes n}\Big(\sum_{r=1}^n\sum_{\sigma\in 
\textup{Shuf}(r,n-r) }\sigma\big(\sum_{g\in 
\lab_r(X)}\langle X,g\rangle\Delta(X,g)\otimes 
\sum_{f\in \lab_{n-r}(Y)}\langle 
Y,f\rangle\Delta(Y,f)\big)\Big)\\
&=& q^{\otimes n}\Big(\sum_{r=1}^n \Delta^{r-1}
(X)\bullet \Delta^{n-r-1}(Y)\Big)\\
&=& \sum_{r=1}^n q^{\otimes r}\circ \Delta^{r-1}
(X)\bullet q^{\otimes n-r}\circ \Delta^{n-r-1}
(Y),
\end{eqnarray*}
which completes the proof.
\end{proof}

To show that $\mathcal{J}$ is a coalgebra morphism 
on the Poisson bialgebra, we start
with the following map
for a fixed $X\in\na$ of 
the form (\ref{form}) and two fixed natural 
numbers $m,$ $n$:
\begin{align}
\lab_{m+n}(X) \longrightarrow 
\bigsqcup_{\substack{(I,\phi,c)\in 
\lab_2(X)}}\lab_m(X_c^1)\times\lab_n(X_c^2),
\label{fomrula:Jiscoalgebraic}
\end{align}
and its inverse.
These maps turn out to give a bijection after 
slight modifications.
We will write elements on the right hand side
of \eqref{fomrula:Jiscoalgebraic}
in the form $((I_1,\phi_1,c_1),
(I_2,\phi_2,c_2))_{(I,\phi,c)}$.

We first construct a tuple $((I_1,\phi_1,c_1),
(I_2,\phi_2,c_2))_{(I,\phi,c)}$ in the codomain of 
(\ref{fomrula:Jiscoalgebraic}) from a $(m+n)$-coloring 
$(I_d,\phi_d,d)$ of $X$.
Let $(I,\phi,c)$ be
\begin{equation}\label{2c}
\begin{aligned}
c(x) & \coloneqq \begin{cases}1, 
& \textup{if}\,
d(x)\leq m,\\
2, & \textup{if}\,
d(x)> m,\end{cases}\text{ for any $x\in 
P_X\sqcup V_X$},\\
I & \coloneqq\{a_{i,j},a_{i,j}^*\in I_d\mid 
c(a_{i,j})\neq c(a_{i,j}^*)\},
\text{ and }\phi\coloneqq \phi_d|_I,
\end{aligned}
\end{equation}
and let
the $3$-tuples 
\begin{align*}
(I_1,\phi_1,c_1)&\coloneqq (I_d\cap 
P_{X_c^1},\phi_d|_{I_d\cap 
P_{X_c^1}},d|_{X_c^1})\quad\text{ and }\\
(I_2,\phi_2,c_2)&\coloneqq(I_d\cap 
P_{X_c^2},\phi_d|_{I_d\cap 
P_{X_c^2}},d|_{X_c^2}-m)\end{align*} be 
the $m$-coloring on $X_c^1$ and an $n$-coloring on $X_c^2$ respectively.

Conversely, 
given a tuple $((I_1,\phi_1,c_1),
(I_2,\phi_2,c_2))_{(I,\phi,c)}$,
put
\begin{equation}\label{constr:inverse}
I_d:=I\sqcup I_1\sqcup I_2\quad\mbox{and}\quad 
\phi_d:=\phi\sqcup \phi_1\sqcup \phi_2.
\end{equation}
It remains to construct a coloring map $d$ on 
$P_X\sqcup V_X\subset P_{X_c^1}
\sqcup V_{X_c^1}\sqcup P_{X_c^2}\sqcup 
V_{X_c^2}\sqcup I$.
We do this case by case:
\begin{itemize}[noitemsep,leftmargin=*,nosep]

\item If $x\in P_{X_c^1}\sqcup V_{X_c^1}$, 
then $d$ is set to $d(x)=c_1(x)$; 

\item If $x\in P_{X_c^2}\sqcup V_{X_c^2}$, 
then $d$ is set to $d(x)=c_2(x)+m$; 

\item If $x\in I$, then $d$ is defined as 
below (cf. \eqref{formula:coass1}) with 
$x = a_{i,j}$: 
\begin{enumerate}[label = \rm{(}\arabic*),noitemsep,leftmargin=*,nosep]\item  $d(x) = c_1(a_{i,j-1})$ if $a_{i,j-1}\in P_{X_c^1}\sqcup V_{X_c^1}$ (now $c(x) = 1$), and $d(x)=c_2(a_{i,j-1})+m$ if $a_{i,j-1}\in P_{X_c^2}\sqcup V_{X_c^2}$; 
\item If $a_{i,j-1}\in I$ and $\phi(a_{i,j-1}) = a_{i,j}$ and $v=(a_{i,j})_s$ (following (\ref{formula:cutting})), then $d(x) = c_1(v)$ if $c(a_{i,j}) = 1$ and $d(x) = c_2(v)+m$ if $c(a_{i,j}) = 2$; 
\item In the remaining case, we have $a_{i,j-1}\in I$ and $\phi(a_{i,j-1})\neq a_{i,j}$.
Put $a_{i',(j-1)'}^* = \phi(a_{i,j-1}).$ 
Define $d(x)$ recursively by putting $d(x) = d(a^*_{i',(j-1)'})$, and $d(a^*_{i',(j-1)'})$ is obtained in a same way as obtaining $d(x).$
\end{enumerate}
\end{itemize}

The two maps constructed above lead to a bijection in the lemma below.

\begin{lemma}\label{bij}
For an element $X\in \na$ of the form \text{\rm{(\ref{form})}}, two natural numbers $m$ and $n$, and
a coloring $(I, \phi, c)$
of $X$,
suppose that
$|c|/2=-||c||\neq 0$. 
Then the restriction of the map
~\eqref{fomrula:Jiscoalgebraic} to $\lab^0_{m+n}(X)$ becomes the following bijection with the inverse given above
\begin{equation}
\label{4.5}
\lab^0_{m+n}(X) \overset{\sim}
{\longrightarrow}\bigsqcup_{\substack{(I,\phi,c
)\in \lab_2(X)\\|c|/2=-||c||\neq 
0}}\lab^0_m(X_c^1)\times \lab^0_n(X_c^2).
\end{equation}
\end{lemma}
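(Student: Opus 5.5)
The bijection \eqref{4.5} is a refinement of the coassociativity argument of Lemma~\ref{coasso}, now allowing an arbitrary splitting $m+n$ of the colours instead of $2+1$. The plan is to check two things: the map \eqref{2c} and the assignment \eqref{constr:inverse} send the indicated subsets into each other, and they are mutually inverse. The key bookkeeping facts are those from Lemma~\ref{coasso}:
\[
|d| = |c|+|c_1|+|c_2|,\qquad \|d\| = \|c\|+\|c_1\|+\|c_2\|,
\]
together with $X_d^{t} = (X_c^1)_{c_1}^{t}$ for $t\le m$ and $X_d^{m+t} = (X_c^2)_{c_2}^{t}$. Both hold because the cut set $I_d$ splits as a disjoint union of cuts between the two colour blocks ($I$) and cuts inside each block ($I_1$, $I_2$).

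\textbf{Well-definedness in the forward direction.} Let $d\in\lab^0_{m+n}(X)$. First, $(I,\phi,c)$ from \eqref{2c} is a $2$-coloring: the compatibility $c(a_{i,j})=c(f(a_{i,j}))$ follows from that of $d$, since the block label is a function of $d$. The height condition is also inherited, because $d(x)>d(\phi x)$ forces $c(x)\ge c(\phi x)$, and on $I$ the values differ. Second, $c_1,c_2$ are colorings of $X_c^1,X_c^2$, whose edge sets are $P_X\setminus I$ split by colour.

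Next I need $|c|/2=-\|c\|\ne0$ and that each $(X_c^\bullet)_{c_\bullet}^t$ is a single nonempty knot. The second holds because these are exactly the $X_d^t$. For the first, use the identity $\deg_h\langle X,c\rangle = (|c|/2+\|c\|)/2\ge 0$ for each of $c,c_1,c_2$. The $d$-condition says $\deg_h\langle X,d\rangle=0$, and the degrees add, so all three $h$-degrees vanish, i.e.\ $|c_\bullet|/2=-\|c_\bullet\|$. Nonnegativity of the $h$-degree needs justification; I would prove it as a combinatorial fact about cutting necklaces: each pair in $I$ changes the component count by at most one, so $|\,\|c\|\,|\le |c|/2$. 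Nonvanishing of $|c|$ holds because $X_d^1$ and $X_d^{m+1}$ are nonempty and lie in different $c$-blocks, while $X$ must be a knot (a link with more than one component cannot reach $n$ knots with $h$-degree $0$ without cuts between blocks). Finally, $X_c^1$ and $X_c^2$ should themselves be knots or products compatible with $\lab^0$. I expect this to need care, and I would verify it using the component count $|X_c^1|+|X_c^2|=|X|+|c|/2$ together with the previous equalities.

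\textbf{Inverse direction and mutual inverseness.} Given a triple $((I_1,\phi_1,c_1),(I_2,\phi_2,c_2))_{(I,\phi,c)}$, I must show that $d$ from \eqref{constr:inverse} is well defined. The recursion in case (3) terminates: following $f$ backward through cut edges of $I$ eventually reaches an uncut edge or a vertex, because $f$ is a bijection on the finite set $P_X$ and the orbit contains a non-$I$ element. Otherwise the whole orbit consists of $I$-edges, which would produce a vertex component, handled by case (2). Then $d$ satisfies the colouring rule. The height inequality on $I$ follows from that of $c$, since every $d$-value in block $2$ exceeds every value in block $1$; on $I_1$ and $I_2$ it is inherited. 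Composing the two maps gives the identity by construction: block membership recovers $c$, $I$ is recovered as the cross-block pairs, and restrictions recover $c_1,c_2$.

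The main obstacle is the degree bound $\deg_h\ge0$ and its additivity, which force every intermediate colouring into the $\lab^0$ and $|c|/2=-\|c\|$ regime. I would first try to settle it by an Euler-characteristic count on the ribbon graph obtained by cutting.
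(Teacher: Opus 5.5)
Your route is the paper's: additivity of $|\cdot|$ and $\|\cdot\|$ under \eqref{2c} and \eqref{constr:inverse}, nonnegativity of the $h$-exponent to force $c,c_1,c_2$ into the $h$-degree-zero regime, the identification $X_d^t=(X_c^1)^t_{c_1}$, $X_d^{m+t}=(X_c^2)^t_{c_2}$, and a check that the two constructions are mutually inverse.

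Two of your additions improve on what the paper writes.
\begin{itemize}
\item The paper only asserts $\deg_h\ge 0$. Your count works: the cut permutation is $f_0\circ\tau_1\cdots\tau_{|c|/2}$ with disjoint transpositions, and each changes the number of cycles by exactly one. This gives $|\,\|c\|\,|\le |c|/2$, with the parity needed for integral exponents.
\item An $f$-orbit made entirely of $I$-edges does need separate treatment. However, the paper's case (2) only covers $f$-fixed points. For a longer such orbit (e.g.\ cutting both pairs of $aa^*bb^*$) the recursion (3) loops. So you must define $d$ there directly as the colour of the vertex component that the orbit produces.
\end{itemize}

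The step that fails is the non-vanishing, and your claim that $X$ must be a knot is false. Let $X=K_1\& K_2$, where cutting one crossing pair of $K_1$ splits it into pieces $A,B$. Label them so that the height condition forces the colour of $A$ below that of $B$.
\begin{itemize}
\item Then $d\colon K_2\mapsto 1$, $A\mapsto 2$, $B\mapsto 3$ lies in $\lab^0_3(X)$, since $|d|/2=1=-\|d\|$.
\item For $m=1$, $n=2$, the only cut joins two pieces in block $2$. So $I=\varnothing$ and $|c|=0$, and \eqref{2c} does not land in the stated codomain.
\end{itemize}
Likewise, ``$\neq 0$'' cannot be demanded of $c_1,c_2$.
\begin{itemize}
\item A $1$-coloring always has empty cut set.
\item Even for a knot $X$ one can have $I_1=\varnothing$. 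Cut a knot along two pairs into $A,B,C$ with $C$ adjacent to both, colour them $1,2,3$ (heights permitting), and take $m=2$. Then $X_c^1=A\& B$.
\end{itemize}

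So the statement has to be read for knots $X$. Also, $\lab^0_m(X_c^1)$ and $\lab^0_n(X_c^2)$ must mean only ``$h$-degree zero and every piece a single nonempty component''. This is all the coalgebra-morphism argument for $\mathcal J$ needs, since $\mathcal J$, $\Delta$ and $\diag$ are multiplicative.

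With this reading, your block argument does give $I\neq\varnothing$ for a knot: a $2$-coloring with $I=\varnothing$ is constant on the knot, so one block would be empty. The paper's proof has the same gap, since it asserts all three ``$\neq 0$'' without argument.

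The second example also shows that your worry about $X_c^1,X_c^2$ being knots is misplaced. They are links in general, and nothing about them is required beyond the single-component condition on the pieces $X_d^t$, which you already have.
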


For the proof, we need the following.
\begin{lemma}
For the map
\eqref{fomrula:Jiscoalgebraic},
let $\langle X,c\rangle$ be
the coefficient of $\Delta(X)$ given
by Definition~\text{\rm{\ref{coproduct}}}. 
Then there is an equation
\begin{equation}\label{eq:coeff}
\langle X,d\rangle=\langle 
X_c^1,c_1\rangle\langle 
X_c^2,c_2\rangle\langle X,c\rangle.
\end{equation}
\end{lemma}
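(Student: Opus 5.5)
The identity \eqref{eq:coeff} should follow by comparing the three ingredients of the coefficient in Definition~\ref{coproduct}: the sign $(-1)^{|c_-|}$, the count $|c|$ of cut edges, and the component defect $\|c\|$. This is the same strategy as at the end of the proof of Lemma~\ref{coasso}, where the identity $\langle X,\tilde c\rangle=\langle X,c\rangle\langle X_c^1,c^1\rangle$ was obtained. Since $\langle X,-\rangle$ is a product of a sign and monomials in $h,\hbar$ whose exponents are linear in $|c|$ and $\|c\|$, it suffices to prove three additivity statements for $d=(I_d,\phi_d,d)$ and its image $((I_1,\phi_1,c_1),(I_2,\phi_2,c_2))_{(I,\phi,c)}$ under \eqref{fomrula:Jiscoalgebraic}:
\[
|d|=|c|+|c_1|+|c_2|,\qquad \|d\|=\|c\|+\|c_1\|+\|c_2\|,\qquad |d_-|=|c_-|+|(c_1)_-|+|(c_2)_-|.
\]

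\emph{Cut edges and sign.} First I check that $I_d$ is the disjoint union $I\sqcup I_1\sqcup I_2$. By \eqref{2c}, $I$ consists of the pairs in $I_d$ whose two ends receive different coarse colors. A pair whose ends both have coarse color $1$ (resp.\ $2$) lies in $P_{X_c^1}$ (resp.\ $P_{X_c^2}$), since cutting along $I$ keeps those edges as edges of $X_c^1$ (resp.\ $X_c^2$) with their heights. Hence such a pair belongs to $I_1$ (resp.\ $I_2$), and the first equality follows. Because heights are preserved in $X_c^1$ and $X_c^2$, the condition $\lambda_{i,j}>\lambda_{i',j'}$ defining $c_-$ in \eqref{formula:c-} is tested on the same pair of heights for each piece. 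Intersecting $d_-$ with this partition therefore gives $d_-=c_-\sqcup(c_1)_-\sqcup(c_2)_-$, which is the third equality.

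\emph{Component defect.} I telescope as in Lemma~\ref{coasso}. Write $\|d\|=|X|-\sum_{t=1}^{m+n}|X_d^t|$. I claim $X_d^t=(X_c^1)_{c_1}^t$ for $t\le m$ and $X_d^{t}=(X_c^2)_{c_2}^{t-m}$ for $t>m$; this is the analogue of \eqref{formula:coassoccomp}. The point is that the successor map $f$ for $I_d$ restricted to the edges of $X_c^1$ agrees with the successor map for $I_1$ on $X_c^1$, because cutting along $I$ first and then along $I_1$ gives the same orbits as cutting along $I\sqcup I_1$ at once. Vertices are produced consistently by \eqref{formula:cutting}. Given this claim,
\[
\|d\|=\bigl(|X|-|X_c^1|-|X_c^2|\bigr)+\bigl(|X_c^1|-\textstyle\sum_{t\le m}|(X_c^1)^t_{c_1}|\bigr)+\bigl(|X_c^2|-\textstyle\sum_{t\le n}|(X_c^2)^t_{c_2}|\bigr)=\|c\|+\|c_1\|+\|c_2\|.
\]
Substituting the three equalities into the formula for $\langle X,-\rangle$ yields \eqref{eq:coeff}.

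\emph{Main obstacle.} The hard step is checking that the image of \eqref{fomrula:Jiscoalgebraic} consists of genuine colorings and that the orbit decomposition is compatible. In particular, the coarse coloring $c$ must satisfy the rule $c(a_{i,j})>c(\phi(a_{i,j}))\iff\lambda_{i,j}>\lambda_{i',j'}$ on $I$. This follows from the corresponding rule for $d$ together with monotonicity of the coarsening map $\{1,\dots,m+n\}\to\{1,2\}$, since pairs in $I$ have ends of different coarse color. I would also verify that the restrictions $c_1$ and $c_2$ satisfy the successor condition of Definition~\ref{color} on $X_c^1$ and $X_c^2$, where the successor of an edge preceding an $I$-cut is redirected. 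I would check this by tracing $f$ across each cut in $I$, handling the degenerate case where a cut produces an isolated vertex exactly as in the vertex cases of Lemma~\ref{well}.
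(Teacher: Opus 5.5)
Your proposal is correct and follows the same route as the paper. The paper reduces the identity to the three additivities $|d|=|c|+|c_1|+|c_2|$, $\|d\|=\|c\|+\|c_1\|+\|c_2\|$ and $|d_-|=|c_-|+|(c_1)_-|+|(c_2)_-|$, telescoping $\|d\|$ via $X_d^t=(X_c^1)_{c_1}^t$ and $X_d^{t+m}=(X_c^2)_{c_2}^t$. Your extra checks of $I_d=I\sqcup I_1\sqcup I_2$ and of the compatibility of the $f$-orbits fill in points that the paper only asserts.
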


\begin{proof}
By Definition \ref{coproduct}, we have
\[\begin{aligned}
||c|| = |X| - (|X_{c}^1| + |X_{c}^2|),\,\,||c_1|| = |X_c^1| - 
\sum |(X_{c}^1)_{c_1}^i|,\,\,
||c_2|| = |X_c^2| - \sum |(X_c^2)_{c_2}^i|.
\end{aligned}\] 
Adding them together, by
$({X}_{c}^1)_{c_1}^i=({X})_{d}^i$ and 
$({X}_{c}^2)_{c_2}^i=({X})_{d}^{i+m}$, 
we get
\begin{equation}\label{||||}
||c||+||c_1||+||c_2||=  |{X}|- 
\sum |({X}_{c}^1)_{c_1}^i|- \sum |
({X}_{c}^1)_{c_2}^i|=||d||.
\end{equation}
By Definition \ref{coproduct}  and \eqref{2c}, 
we have
\begin{equation}\label{||}
\begin{aligned}
|d| &= 
|I_d|=|I_1\sqcup I_2\sqcup I|=|c|+ |c_1| 
+ |c_2|, \\
|d_{-}| &= |c_{-}| + |(c_1)_{-}| + |{(c_2)}_{-}|.
\end{aligned}
\end{equation}
The equation \eqref{eq:coeff} follows from the definition of $\langle X,c\rangle$ in Definition~\ref{coproduct} for any coloring $c$.
\end{proof}

\begin{proof}[Proof of Lemma
\text{\rm{\ref{bij}}}]
We first claim that the restriction of \eqref{fomrula:Jiscoalgebraic} on $\lab^0_{m+n}(X)$ (resp. its inverse) has images in the codomain (resp. the domain) of \eqref{4.5}.
For $(I_1,\phi_1,c_1),(I_2,\phi_2,c_2)$ and 
$(I,\phi,c)$ satisfying
$$|c_1|/2=-||c_1||\neq0, \quad
|c_2|/2=-||c_2||\neq0\quad\mbox{and}\quad 
|c|/2=-||c||\neq0,$$ 
we have by (\ref{||||}) and (\ref{||}) the equation
$|d|/2=-||d||\neq0$, which means the coloring 
$(I_d,\phi_d,d)$ constructed 
above lies in 
$\lab_{m+n}^0(X)$. Conversely, given $(I_d,\phi_d,d)$ 
satisfying $|d|/2=-||d||\neq0$, 
the power of $h$ in $\langle X,d\rangle$ is $0$. 
Thus, 
from
\eqref{eq:coeff} 
we get that the powers of $h$ in 
$\langle X,c\rangle,\langle X_c^1,c_1\rangle$ and 
$\langle X_c^2,c_2\rangle$  are all $0$ since the power 
is always positive. Hence
$$|c_1|/2=-||c_1||\neq0,\quad
|c_2|/2=-||c_2||\neq0\quad\mbox{and}\quad 
|c|/2=-||c||\neq0.$$
Since $({X}_{c}^1)_{c_1}^i=({X})_{d}^i$ and 
$({X}_{c}^2)_{c_2}^j=({X})_{d}^{j+m}$, we 
get that $({X}_{c}^1)_{c_1}^i$ and $({X}_{c}^2)_{c_2}^j$ 
have exactly one component for all $i,j$ if and only if 
$(X)_d^k$ has exactly one component for all $k$.
Summarizing the above arguments, our first claim follows.

We show the bijectivity of \eqref{4.5} by proving the two compositions of the map \eqref{4.5} and its inverse to be the identity.
Given
an $(m+n)$-coloring $(I_d,\phi_d,d)$ for $X$ in 
$\lab_{m+n}^0(X)$, 
by \eqref{2c}
we get three colorings $(I,\phi,c),$ $(I_1,\phi_1,c_1)$, and 
$(I_2,\phi_2,c_2)$.
Then by \eqref{constr:inverse}, we get a $(m+n)$-coloring
$(I_{d'},\phi_{d'},d')$.
We show 
$(I_d, \phi_d, d)=(I_{d'}, \phi_{d'}, d')$.
Due to
$P_X=P_{X_c^1}\sqcup  P_{X_c^2}\sqcup I$, we have $I_{d'}=(I_d\cap P_{X_c^1})\sqcup (I_d\cap 
P_{X_c^2})\sqcup I$ and hence $I_{d'}=I_d$. 
Similarly
$\phi_{d'}=\phi_d$. 
The value of $d'$ on $I$ only depends on $I,$ $\phi,$ the heights of edges in $I$, and $c_1$, $c_2$. Hence $d'$ is the only function on $I_d$ identical to $c_1$ on $X_c^1$ and identical to $c_2+m$ on $X_c^2$. We have $d'=d$.

On the other hand, given three tuples $(I,\phi,c)$, $(I_1,\phi_1,c_1)$, and $(I_2,\phi_2,c_2)$,
after getting $(I_d,\phi_d,d)$ by \eqref{constr:inverse}, we get
three colorings
$(I',\phi',c'),$ $(I_1',\phi_1',c_1'),$ and 
$(I_2',\phi_2',c_2')$ by \eqref{2c}.
Notice that for any 2-coloring, its values on the 
cutting edges are uniquely determined by heights due to 
condition that 
$c(a)>c(a^*)$ if and only if 
$\lambda(a)>\lambda(a^*)$. 
Hence $c'=c$. 
We show $I=I'$. 
For any pair of edges $a,a^*\in I$, 
exactly one of $c(a)$ or $c(a^*)$ satisfies $\leq m$, we have 
$I\subset I'$ by (\ref{2c}). 
For any pair of edges $a,a^*\in I'$, we may write $d(a)\leq m$ and $d(a^*)>m$. 
To show $a, a^*\in I$, assume conversely that one of $a,a^*\notin I$, say $a\notin I$.
As $a\in  I'\subset I_d=I\sqcup I_1\sqcup I_2$, we have $a\in I_1$. 
But $\phi_1(a) = \phi_{d}(a) = a^* \in I_2$, which is a contradiction.
By $ I=I'$, the equation $I_d=I\sqcup I_1\sqcup 
I_2=I'\sqcup I_1'\sqcup I_2'$ implies $ 
I_1 \sqcup I_2=I_1'\sqcup I_2'$ and hence $I_1 = I_1'$, $I_2 = I_2'.$
The equations for $\phi$, $\phi_1$, $\phi_2$, $c_1$, and $c_2$ follows straightforwardly from \eqref{2c} and \eqref{constr:inverse}.
\end{proof}

Now we are ready to show the following.

\begin{proposition}\label{J shi bialgebra mophiam}
Let $\mathcal{J}$ be given by \eqref{j}.
Then $\mathcal J$ 
is a bialgebra morphism.
\end{proposition}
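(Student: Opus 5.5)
Lemma~\ref{buhuizaiyong1} already shows that $\mathcal J$ is an algebra morphism, so only compatibility with the coproducts remains. We must show
\[
\mathrm{diag}\circ\mathcal{J}=(\mathcal{J}\otimes\mathcal{J})\circ\Delta .
\]
Counit compatibility is immediate from the definitions: the degree-$0$ part of $\mathcal J$ is $q^{\otimes 0}\circ\Delta^{-1}$, which agrees with the counit of $\na$. By $k[h,\hbar]$-linearity it suffices to check the identity on a single link $X$ of the form \eqref{form}, and then to compare the components in $\la^{\otimes m}\otimes\la^{\otimes n}$ for each fixed pair $(m,n)$.

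On the left, the $(m,n)$-component of $\mathrm{diag}(\mathcal J(X))$ is the $(m+n)$-tensor component of $\mathcal J(X)$ split after the $m$-th factor. By \eqref{buhuizaiyong} this equals
\[
\sum_{d\in\lab^0_{m+n}(X)}\langle X,d\rangle\,\langle X^1_d\rangle\otimes\cdots\otimes\langle X^m_d\rangle\otimes\langle X^{m+1}_d\rangle\otimes\cdots\otimes\langle X^{m+n}_d\rangle .
\]
On the right, expanding $\Delta(X)=\sum_{c\in\lab_2(X)}\langle X,c\rangle X^1_c\otimes X^2_c$ and applying \eqref{buhuizaiyong} to each factor gives
\[
\sum_{c\in\lab_2(X)}\ \sum_{\substack{c_1\in\lab^0_m(X^1_c)\\ c_2\in\lab^0_n(X^2_c)}}\langle X,c\rangle\langle X^1_c,c_1\rangle\langle X^2_c,c_2\rangle\,\bigotimes_i\langle (X^1_c)^i_{c_1}\rangle\otimes\bigotimes_j\langle (X^2_c)^j_{c_2}\rangle .
\]
Two points need care here. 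First, $q$ is only $\pi$-linear, so it kills $h$. Consequently only the colorings whose coefficient $\langle X,c\rangle$ has $h$-degree $0$ contribute, and these are exactly the ones with $|c|/2=-\|c\|$. Second, the degenerate cases $m=0$ or $n=0$, and the coloring $c$ with $I=\varnothing$ (which gives the terms $X\otimes 1$ and $1\otimes X$), have to be handled separately. These correspond to the condition ``$\neq 0$'' in Notation~\ref{0}, and they should match the trivial splittings in $\mathrm{diag}$. I would dispose of these by direct inspection first.

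For the remaining terms, Lemma~\ref{bij} supplies the bijection \eqref{4.5} between the two indexing sets. Under $d\mapsto((c_1,c_2))_c$ the tensor factors agree, since $(X^1_c)^i_{c_1}=X^i_d$ and $(X^2_c)^j_{c_2}=X^{j+m}_d$. The coefficients agree by \eqref{eq:coeff}. Summing over the bijection therefore gives the identity.

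The main obstacle is the restriction to $h$-degree $0$ on the right-hand side. A coloring $c\in\lab_2(X)$ may have positive $h$-degree while still carrying nonzero inner terms, and we must make sure such terms vanish after applying $q\otimes q$ rather than merely being excluded from \eqref{4.5}. The plan is to use \eqref{eq:coeff} once more: $h$-degrees are nonnegative and add. So if $c$ has positive $h$-degree, every combined $d$ has positive $h$-degree, and the corresponding term is killed by $q^{\otimes(m+n)}$ on the left. Likewise $q$ annihilates any $X^i_c$ with more than one component.

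Two further checks go into this step. One must verify that a component that is a single vertex is sent by $q$ to that vertex and is counted consistently on both sides. One must also verify that $\lab^0$ correctly encodes ``$h$-degree $0$ and all pieces are single knots''. Once these are in place, the bijection of Lemma~\ref{bij} completes the proof.
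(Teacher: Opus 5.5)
Your proposal is essentially the paper's proof: multiplicativity comes from Lemma~\ref{buhuizaiyong1}, $\pi$-linearity of $\mathcal{J}$ discards colorings of positive $h$-degree, the terms $\mathcal{J}(X)\otimes 1+1\otimes\mathcal{J}(X)$ are split off, and the remaining terms are matched through the bijection \eqref{4.5} of Lemma~\ref{bij} together with \eqref{eq:coeff}. One point to tighten, where the paper is equally loose: the colorings with $I=\varnothing$ give only $X\otimes 1$ and $1\otimes X$ when $X$ is a knot or a vertex. So either first reduce to these generators (every link is a $k[h]$-combination of products of knots and vertices by \eqref{quiver skein relations 1}, and $\Delta$, $\mathcal{J}$, $\diag$ are multiplicative), or let the bijection also absorb the non-constant $I=\varnothing$ colorings of a multi-component link.
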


\begin{proof}
By Lemma \ref{buhuizaiyong1}, $\mathcal{J}$ is algebra 
morphism. It suffices to show 
\begin{equation}\label{eq:J}
(\mathcal{J}\otimes\mathcal{J})
(\Delta(X))=\diag(\mathcal J(X)),
\end{equation}
for any 
$X\in \na$. 
Recall that the coproduct on $\mathcal{F}$ is given by
\[
\diag(a_1\otimes\cdots\otimes 
a_m)=\sum_{i=0}^m(a_1\otimes\cdots\otimes 
a_i)\otimes(a_{i+1}\otimes\cdots\otimes a_m).
\]
Thus 
\[(\mathcal{J}\otimes\mathcal{J})
(\Delta(X))=\sum_{c\in \lab_2(X)}\pi
(\langle X,c\rangle)\mathcal{J}(X_c^1)\otimes\mathcal{J}
(X_c^2),
\] 
where $\pi$ is the projection $k[h,\hbar] \to k[\hbar].$
We have $\pi(\langle X,c\rangle)\neq0$ if and only if  $|c|/2=-||c||$.
When $|c|/2=-||c||=0$, we have $\Delta(X,c)=1\otimes X+X\otimes 1$ and $\langle X,c\rangle=1$. 
Thus the left hand side of \eqref{eq:J} equals
\[
\sum_{|c|/2=-||c||\neq 0}\langle X,c\rangle(\mathcal{J}
(X_c^1)\otimes\mathcal{J}(X_c^2))+\mathcal{J}(X)\otimes 
1+1\otimes\mathcal{J}(X).
\] 
From the definition of $\mathcal{J}$ in \eqref{buhuizaiyong}, we have
\[\begin{aligned}
\mathcal{J}(X_c^1)
&=\sum_mq^{\otimes m}\circ \Delta^{m-1}
(X_c^1)=\sum_mq^{\otimes m} \sum_{c_1\in 
\lab_m(X_c^1)}\langle X_c^1,c_1\rangle 
\Delta(X_c^1,c_1)\\
&=\sum_m\sum_{c_1\in 
\lab_m^0(X_c^1)}\langle X_c^1,c_1\rangle\langle 
\Delta(X_c^1,c_1)\rangle,\\
\mathcal{J}
(X_c^2)
&=\sum_n\sum_{c_2\in 
\lab_n^0(X_c^2)}\langle X_c^2,c_2\rangle\langle 
\Delta(X_c^2,c_2)\rangle.
\end{aligned}
\]
Here the notations $\langle 
\Delta(X_c^1,c_1)\rangle$ and $\langle 
\Delta(X_c^2,c_2)\rangle$ are defined by putting $\langle X_1\otimes X_2\otimes\cdots\otimes X_n\rangle:=\langle X_1\rangle\otimes\langle X_2\rangle\otimes\cdots\otimes\langle X_n\rangle$. 
By Lemma \ref{bij},  
we only need to take summations in $\lab^0_{m+n}$ and 
hence
the left hand side of \eqref{eq:J} equals 
\begin{align}
&\sum_{m,n\geq 1}\sum_{b\in 
\lab_{m+n}^0(X)}\langle X,b\rangle \left(\langle 
X_b^1\rangle\otimes\cdots\otimes \langle 
X_b^{m}\rangle\right)\otimes \left(\langle 
X_b^{m+1}\rangle\otimes\cdots\otimes \langle 
X_b^{m+n}\rangle\right)\nonumber\\
&+\mathcal{J}(X)\otimes 
1+1\otimes\mathcal{J}(X).\label{eq:lcoprod}
\end{align} 
On the other hand, 
for the right hand side of \eqref{eq:J}, we have
\begin{align}
&\diag(\mathcal{J}
(X))\nonumber\\
&=
\diag\Big(\sum_l\sum_{c\in 
\lab_{l}^0(X)}\langle X,c\rangle \langle 
X_c^1\rangle\otimes \cdots \otimes\langle  
X_c^l\rangle\Big)\nonumber\\
&=
\sum_{k\leq l} \sum_{c\in 
\lab_{l}^0(X)}\langle X,c\rangle \left(\langle X_c^1\rangle 
\otimes \cdots 
\otimes\langle  
X_c^{l-k}\rangle\right)
\otimes\left(\langle 
X_c^{l-k+1}\rangle\otimes \cdots \otimes \langle 
X_c^{l}\rangle\right)\nonumber \\
&=
\sum_{m,n} \sum_{c\in 
\lab_{m+n}^0(X)}\langle X,c\rangle \left(\langle X_c^1\rangle 
\otimes \cdots \otimes \langle 
X_c^{m}\rangle\right)\otimes\left(\langle X_c^{m+1}\rangle\otimes 
\cdots \otimes \langle X_c^{n+m}\rangle\right) .
\label{eq:rcoprod}
\end{align}  

Comparing \eqref{eq:lcoprod}
and \eqref{eq:rcoprod}, the desired equation follows.
\end{proof}

\subsubsection{$\mathcal{J}$ is a quantization
of $E(L_\hbar)$}\label{section:JisaquantizationofELhbar}
We check the rest conditions for quantization. We prepare some notations.

\begin{notation}\label{sign}
    Let $P_X$ and $P_Y$ denote the edge sets of elements $X$ and $Y\in \na$, respectively. Then  the set of \textit{crossing edges} $T_{X,Y}$ is given as:
\[\begin{aligned}
    T_{X, Y} := \{ 
    a_{i,j} \in P_X,\ b_{k,l} \in P_Y \mid a_{i,j} = b_{k,l}^* \},
\end{aligned}
\]
Put $T_{X } :=  T_{X , X}$. 
\end{notation}

\begin{lemma}\label{map}
Let 
$p_{\hbar}:=\mathrm{symm}\circ \mathcal J$.
Then
$p_\hbar(\mathbf N(Q)_{h,\hbar})= \varepsilon_\hbar(L_\hbar)$.
\end{lemma}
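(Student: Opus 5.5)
The statement has two parts: the image of $\mathcal J$ lies in $E(L_\hbar)$, and $\mathrm{symm}\circ\mathcal J$ is onto $\varepsilon_\hbar(L_\hbar)$. Since $\mathrm{symm}|_{E(L_\hbar)}$ is an isomorphism onto $\varepsilon_\hbar(L_\hbar)$ by Corollary~\ref{E}, it suffices to prove $\mathcal J(\na)= E(L_\hbar)$.

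\emph{Step 1: $\mathcal J(\na)\subset E(L_\hbar)$.} By \eqref{buhuizaiyong}, the degree-$n$ component $\prescript{n}{}{\mathcal J(X)}$ is the sum over $c\in\lab^0_n(X)$ of $\langle X,c\rangle\langle X^1_c\rangle\otimes\cdots\otimes\langle X^n_c\rangle$. We must show
\[
\prescript{n}{}{\mathcal J(X)}-\sigma_i\bigl(\prescript{n}{}{\mathcal J(X)}\bigr)=\hbar\,\nu_i\bigl(\prescript{n-1}{}{\mathcal J(X)}\bigr).
\]
Compare colorings $c$ with the colorings $\sigma_i c$ obtained by exchanging colors $i$ and $i+1$. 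A coloring in $\lab^0_n(X)$ is determined by which crossing pairs of $T_X$ are cut, and the inequality $c(a)>c(\phi(a))\iff\lambda(a)>\lambda(\phi(a))$ constrains only cut pairs whose two ends carry colors $i$ and $i+1$.
\begin{itemize}
\item If no cut pair joins the knots of colors $i$ and $i+1$, then $\sigma_i c$ is again a valid coloring with the same coefficient, so these terms cancel in the difference.
\item Otherwise, the knots of colors $i$ and $i+1$ are joined by cut pairs. Every pair joining them must be oriented consistently by height, so $\sigma_i c$ is not a coloring.
\end{itemize}
Merge colors $i$ and $i+1$ into a single color. Since $c\in\lab^0_n$ forces each color class to be one knot, exactly one pair must join the two knots. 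Un-cutting that pair gives a coloring $c'\in\lab^0_{n-1}(X)$ whose $i$-th knot $K$ has a single marked self-crossing.

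Conversely, $\nu(\langle K\rangle)$ from \eqref{formula:cobracket} sums over the self-crossings of $K$ and produces both orders via the wedge. The height condition selects exactly one order for $c$, and the other order arises from the sign of $\sigma_i$ applied to the colorings realising the opposite orientation. Matching coefficients uses Definition~\ref{coproduct}: cutting one more pair changes $|c|/2$ by $1$ and $\|c\|$ by $-1$, so $\langle X,c\rangle=\hbar\,(\pm1)\langle X,c'\rangle$. The sign $(-1)^{|c_-|}$ accounts for whether the $Q$-edge lies above its partner, which matches the sign of $\langle a_i,a_j\rangle$ and of the wedge. This is the same bookkeeping as in Lemma~\ref{bij} and the proof of Lemma~\ref{coasso}.

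\emph{Step 2: surjectivity.} By Turaev's description, $E(L_\hbar)$ is generated as an algebra (under $\bullet$) by elements of the form $a+(\text{higher tensor degree})$ for $a\in L_\hbar$. Equivalently, via $\mathrm{symm}$, the image of $\mathcal J$ is a subalgebra, by Lemma~\ref{buhuizaiyong1}, whose image in $S(L_\hbar)$ contains, for each necklace $\alpha$, an element $\alpha+(\text{terms in }\bigoplus_{n\ge2}S^n)$. Indeed, take a knot $X_\alpha$ with underlying path $\alpha$ and any heights. Then
\[
\mathcal J(X_\alpha)=\alpha+\sum_{n\ge2}\prescript{n}{}{\mathcal J(X_\alpha)}.
\]
Products of such elements have leading symmetric term $\alpha_1\cdots\alpha_r$. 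Use induction on total length, which is finite and decreases in the higher terms because cutting removes edges (the spiral argument of Example~\ref{epsilon}); this shows every monomial of $S(L_\hbar)$ lies in the image, modulo terms of strictly smaller length already handled. Hence $\mathrm{symm}\circ\mathcal J$ is onto. Since the symm map is bijective on $E$, the equality $\mathcal J(\na)=E(L_\hbar)$ follows as well.

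The main obstacle is the sign and orientation matching in Step 1, specifically identifying $\sigma_i$-defect terms exactly with $\hbar\nu_i$, including the degenerate cases where the cut pair consists of adjacent edges and produces a vertex component. I would handle these as in the second half of the proof of Lemma~\ref{well}.
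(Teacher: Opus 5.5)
Your proposal is correct. Its Step~2 is the paper's proof in all essentials. The paper also writes $p_\hbar(X_\alpha)=\alpha+\sum_i u_i$, where the $u_i$ come from colorings that cut at least one pair. It then inducts on the number $|T_{X_\alpha}|$ of self-crossing edges rather than on total edge length; both decrease under cutting, so either works. It concludes by multiplicativity of $p_\hbar$.

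The real difference is your Step~1, which the paper's proof does not contain and which the lemma as stated does not require. The identity $\mathrm{symm}(a\bullet b)=\mathrm{symm}(a)\,\mathrm{symm}(b)$ holds on all of $\mathcal F$: the $\binom{m+n}{m}$ shuffle terms, each weighted by $1/(m+n)!$, give $1/(m!\,n!)$. So $p_\hbar$ is a multiplicative map into $S(L_\hbar)=\varepsilon_\hbar(L_\hbar)$ regardless of where $\mathcal J$ lands, and your reduction to $\mathcal J(\na)=E(L_\hbar)$ proves more than is asked.

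What Step~1 buys is the inclusion $\mathcal J(\na)\subset E(L_\hbar)$. The paper uses this without proof when it deduces Proposition~\ref{surj} from this lemma and Corollary~\ref{E}, and when it treats $\mathcal J$ as a map to $E(L_\hbar)$. Your argument for the inclusion is sound:
\begin{itemize}
\item when the knots of colors $i$ and $i+1$ are not joined, cancel $c$ against $\sigma_i c$;
\item otherwise, uncut the joining pair to get $c'\in\lab^0_{n-1}(X)$ with a marked self-crossing of its $i$-th knot;
\item then check that $\langle X,c\rangle/\langle X,c'\rangle=\hbar(-1)^{|c_-|-|c'_-|}$ reproduces $\hbar$ times the pairing sign in \eqref{formula:cobracket}, combined with the sign of the wedge for the order selected by the heights.
\end{itemize}

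One justification there should be tightened. That exactly one cut pair joins the knots of colors $i$ and $i+1$ does not follow merely from each color class being a single knot. It follows from the condition $|c|/2=-\|c\|$ built into $\lab^0_n$, which forces every cut to raise the number of components by exactly one. Hence components and cut pairs form a tree (a forest when $X$ is a link), with no two cut pairs joining the same two components.
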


\begin{proof}
For any $X\in \na$, we have $\Delta^0(X)=X$ and
\[
\Delta(X)=1\otimes X+X\otimes1+
\textstyle\sum_i u_i\otimes v_i\text{ with }u_i\text{ or }v_i\in h\na+\hbar\na.\]   
 
Note that $|T_{u_i}|+|T_{v_i}|<|T_X|$. 
For \begin{align}\Delta^{k-1}(X)
=
\sum _{i=0}^k1^{\otimes i}\otimes X\otimes  
1^{\otimes (k-i-1)}+\sum_i u_i^1\otimes\cdots\otimes 
u_i^k,\label{formula:k-1coproduct}\end{align}
we have $\sum_{l=1}^k|T_{u_i^l}|<|T_X|$ for all $i$. 

Let $\alpha$ be a cyclic path in $L$, with 
$n$ self-common edges and $X_\alpha$ 
an element in $\na$ lifting 
$\alpha$. 
Notice $q(1)=0$.
With the above computation of $\Delta^{k-1}(X)$ for $k\geq 1$, the definition of the maps $\mathrm{symm}$ and $\mathcal{J}$ implies $p_\hbar(X_\alpha)=\alpha+\sum _iu_i$ with $u_i\in \hbar \varepsilon_\hbar(L_\hbar)$ being a polynomial 
over $k[\hbar]$ 
and $|T_{u_i}|\leq n-1$.
Note that $|T_{X_\alpha}|$ only depends on $\alpha$. 
We prove the lemma by induction on $|T_{X_\alpha}|$. 
If $|T_{X_\alpha}|=0$, then $u _i= 0$ and $p_\hbar(X_\alpha)=\alpha$. 
Suppose $\alpha\in \im p_\hbar$ if 
$|T_{X_\alpha}|\leq n-1$. 
For any $\alpha\in L\subset \varepsilon_\hbar(L_\hbar)$ with 
$|T_{X_\alpha}|=n$, we have $p_\hbar(X_\alpha)=\alpha+\sum_i u_i$. 
Since $|T_{u_i}|<n$, 
we have $\sum_i u_{i} \in \im p_\hbar$ 
and hence $\alpha\in \im p_\hbar$.
We have the desired 
$p_\hbar(\mathbf N(Q)_{h,\hbar})
= \varepsilon_\hbar(L_\hbar)$ by the multiplicativity of $p_\hbar$.
\end{proof}

Combining Corollary~\ref{E} and Lemma~\ref{map}, we have

\begin{proposition}\label{surj}
Let $\mathcal{J}$ be as in  
\eqref{j}.
Then $\mathcal J(\mathbf 
N(Q)_{h,\hbar})= E(L_\hbar)$.
\end{proposition}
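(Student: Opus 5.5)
Proposition~\ref{surj} says $\mathcal{J}(\na)=E(L_\hbar)$. I will derive it from Lemma~\ref{map} and Corollary~\ref{E}, in two steps: first show $\mathcal{J}(\na)\subset E(L_\hbar)$, then show that $\mathrm{symm}$ is injective on $\mathcal{F}$-elements coming from $E(L_\hbar)$, so that surjectivity transfers.

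\textbf{Step 1: containment.} I show that for each link $X$ of the form \eqref{form}, the components $\prescript{n}{}{\mathcal{J}(X)}=q^{\otimes n}\circ\Delta^{n-1}(X)$ satisfy
\[
\prescript{n}{}{a}-\sigma_i(\prescript{n}{}{a})=\hbar\,\nu_i(\prescript{n-1}{}{a})\quad\text{for } a=\mathcal{J}(X).
\]
By coassociativity (Lemma~\ref{coasso}),
\[
\Delta^{n-1}=(\id^{\otimes(i-1)}\otimes\Delta\otimes\id^{\otimes(n-i-1)})\circ\Delta^{n-2}.
\]
So it suffices to prove the one-factor identity
\[
(q\otimes q)\bigl(\Delta(Z)-\perm\Delta(Z)\bigr)=\hbar\,\nu(q(Z)),
\]
with $Z$ running over the tensor factors in $\Delta^{n-2}(X)$. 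Here $\perm$ swapping the $i$-th and $(i+1)$-st outputs is exactly $\sigma_i$.

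Since $q$ kills everything with more than one component and is $\pi$-linear, only terms with $\pi(\langle Z,c\rangle)\neq0$ survive. Those are the colorings with $|c|/2=-\|c\|$, and both $Z^1_c$, $Z^2_c$ must be knots. If $Z$ is not a knot, then $q(Z)=0$, and a coloring splitting $Z$ into two knots with no $h$-power would need $\|c\|<0$ while merging, which is impossible. So both sides vanish.

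If $Z$ is a knot, the surviving colorings $c\neq$ trivial cut $Z$ into two knots. The number of cut pairs is $|c|/2=1$, so they cut at a single pair $a,a^*$ with coefficient $\pm\hbar$. The sign $(-1)^{|c_-|}$ and the color rule (the higher-height edge gets the larger color) pair each such coloring $c$ with the color-swapped coloring. Their difference reproduces exactly the term $\langle a_i,a_j\rangle(\cdots)\wedge(\cdots)$ of \eqref{formula:cobracket}. Higher $|c|$ with $\|c\|=-|c|/2$ would produce more than two pieces, contradicting $n=2$. This is the Schedler/Turaev computation that $\Delta-\perm\Delta\equiv\hbar\nu$.

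\textbf{Step 2: transfer of surjectivity.} By Lemma~\ref{map}, $\mathrm{symm}\circ\mathcal{J}$ is onto $\varepsilon_\hbar(L_\hbar)$. By Corollary~\ref{E}, $\mathrm{symm}|_{E(L_\hbar)}$ is an isomorphism onto $\varepsilon_\hbar(L_\hbar)$. Take $e\in E(L_\hbar)$ and choose $X$ with $\mathrm{symm}(\mathcal{J}(X))=\mathrm{symm}(e)$. By Step 1, $\mathcal{J}(X)\in E(L_\hbar)$, so injectivity of $\mathrm{symm}|_{E(L_\hbar)}$ gives $\mathcal{J}(X)=e$.

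\textbf{Main obstacle.} The delicate point is the sign and coefficient bookkeeping in Step 1: matching $(-1)^{|c_-|}$ and the height-dependent colorings against the orientation convention $\langle a_i,a_j\rangle$ in \eqref{formula:cobracket}. I would first check it on the single-crossing example given after Definition~\ref{coproduct}, and then argue in general by the pairing of each coloring with its color-swapped coloring.
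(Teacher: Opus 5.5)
Your argument is correct, and it is the paper's own route: the paper obtains Proposition~\ref{surj} in one line by combining Lemma~\ref{map} with Corollary~\ref{E}, which is exactly your Step~2. The containment $\mathcal{J}(\na)\subset E(L_\hbar)$ that you verify in Step~1 is used in the paper only tacitly; it is already presupposed when $p_\hbar$ is written as $\mathrm{symm}|_{E(L_\hbar)}\circ\mathcal{J}$. Your Step~1 therefore usefully fills in a point the paper leaves unstated.

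Two small corrections to Step~1.

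\begin{itemize}
\item \textbf{Two-component $Z$.} Take $Z=A\,\&\,B$. The colorings with $I=\varnothing$ that put $A$ and $B$ in different tensor factors have $\|c\|=0$ and coefficient $1$, so they do survive $q\otimes q$. Your claim that no such coloring exists is therefore false. The conclusion still holds, however: these colorings contribute $\langle A\rangle\otimes\langle B\rangle+\langle B\rangle\otimes\langle A\rangle$, which cancels in $\Delta-\perm\Delta$.
\item \textbf{Knot $Z$.} The height rule admits exactly one coloring for each cut pair $\{e,e^*\}$. So the antisymmetrization comes from $\perm$, not from a second, colour-swapped coloring in $\lab_2(Z)$. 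The remaining sign check is the same computation the paper carries out modulo $\hbar^2$ in the proof of Theorem~\ref{coquantization}. After applying $q\otimes q$ to a knot, only the single-pair terms with coefficient $\pm\hbar$ survive, so the identity holds exactly, not just modulo $\hbar^2$.
\end{itemize}
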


The following proposition establishes
(\ref{formula:quantization}) in the definition of quantization.
Its proof is postponed to Section~\ref{sec:quantizationpart2}.

\begin{proposition}\label{quantization}
For 
elements \(X\) and \(Y\) of the form \eqref{form}, we have
\begin{equation}
\nonumber
X*Y-Y*X=h\mathcal{J}^{-1}
\{\mathcal{J}(X),\mathcal{J}(Y)\}\mod 
h\ker\mathcal{J},
\end{equation}
where \(\{\cdot,\cdot\}\) is the Poisson bracket on 
the Poisson bialgebra $E(L_\hbar)$.
\end{proposition}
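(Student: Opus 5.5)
We reduce the statement to a computation in which $X*Y$ and $Y*X$ differ only by a sequence of height exchanges between components of $X$ and components of $Y$. Concretely, $X*Y$ is $X\&Y$ with all heights of $Y$ above those of $X$, while $Y*X$ has all heights of $X$ above those of $Y$. We move the heights of $X$ upward past the heights of $Y$ one adjacent pair at a time, using relation \eqref{quiver skein relations 1} (the two edges always lie in different components, one from $X$ and one from $Y$). Each elementary exchange of an adjacent pair $(x,y)$ with $x\in P_X$, $y\in P_Y$ costs $h$ times a smoothing term $Z''_{x,y}$, and this term vanishes unless $x=y^*$. Summing the exchanges gives
\[
X*Y-Y*X=h\sum_{(x,y)\in T_{X,Y}}\pm Z_{x,y},
\]
where $Z_{x,y}$ is the link obtained from an intermediate height configuration by fusing the component of $x$ with the component of $y$ at the crossing $(x,y)$. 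The sign is $\pm$ according to $\langle x,y\rangle$, and the intermediate configuration has the heights of $X$ and $Y$ partially interleaved.

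Next we compute the right-hand side modulo $h\ker\mathcal J$. Since $\mathcal J$ is a bialgebra morphism (Proposition~\ref{J shi bialgebra mophiam}) and surjective onto $E(L_\hbar)$ (Proposition~\ref{surj}), it suffices to show
\[
\mathcal J\Bigl(\sum_{(x,y)}\pm Z_{x,y}\Bigr)=\{\mathcal J(X),\mathcal J(Y)\}.
\]
Indeed, $h^{-1}(X*Y-Y*X)$ is then a lift of the bracket, and any two lifts differ by an element of $\ker\mathcal J$.

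To prove this identity, we expand $\mathcal J(Z_{x,y})$ via \eqref{buhuizaiyong} as a sum over colorings in $\lab_n^0(Z_{x,y})$. We then compare it with Turaev's bracket from Lemma~\ref{Lie} applied to
\[
\mathcal J(X)=\sum_{d}\langle X,d\rangle\,\langle X_d^1\rangle\otimes\cdots
\quad\text{and}\quad
\mathcal J(Y)=\sum_{e}\langle Y,e\rangle\,\langle Y_e^1\rangle\otimes\cdots.
\]
That bracket is a sum over pairs (tensor factor $\langle X_d^i\rangle$, tensor factor $\langle Y_e^j\rangle$) of shuffles with the necklace bracket inserted. Each necklace bracket $\{\langle X_d^i\rangle,\langle Y_e^j\rangle\}$ is itself a sum over crossing pairs $(x,y)\in T_{X,Y}$ with $x$ in $X_d^i$ and $y$ in $Y_e^j$.

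The plan is to build a bijection, modelled on Lemma~\ref{bij} and the coloring bijections in Lemmas~\ref{coasso} and \ref{bial}, between two sets:
\begin{itemize}
\item triples $(d,e,(x,y))$ together with a shuffle position placing $X_d^i$ and $Y_e^j$ in the same slot;
\item pairs $((x,y),b)$ with $b\in\lab^0_{n}(Z_{x,y})$.
\end{itemize}
Given $b$, we cut $Z_{x,y}$ back at $(x,y)$ to recover colorings $d$ of $X$ and $e$ of $Y$. Colors below the fused component come from the lower of the two links, which reproduces the factors $a_{1,i-1}\bullet b_{1,j-1}$ in Lemma~\ref{Lie}. Colors above it come from the higher link, which reproduces $a_{i+1,m}\bullet b_{j+1,n}$. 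The coefficients then match: $\langle Z_{x,y},b\rangle=\langle X,d\rangle\langle Y,e\rangle$, by the same additivity of $|c|$, $\|c\|$ and $|c_-|$ used in \eqref{eq:coeff}.

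We expect the main obstacle to be the contribution of colorings of $Z_{x,y}$ that cut an edge of the $X$-part against an edge of the $Y$-part, i.e.\ a pair $\phi(x')=y'$ with $x'\in P_X$ and $y'\in P_Y$. Such pairs have no counterpart on the bracket side. They also depend on the intermediate height ordering, so that ordering must be chosen carefully.

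For these terms, the first attempt will be the sign-cancellation argument of Lemma~\ref{bial} (the ``both $=0$ and $<0$'' trick), adapted to the fused component: show that such colorings either do not exist in $\lab^0$ or cancel in pairs across consecutive exchanges. If that fails, we instead perform the exchanges in a fixed order (all edges of one component of $X$ at a time). We then argue by induction on $|T_{X,Y}|$ that the discrepancy terms carry an extra $h$ and hence lie in $h\ker\mathcal J$ after dividing.
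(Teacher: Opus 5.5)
Your telescoping of the height exchanges into $h\sum\pm Z_{x,y}$, your reduction to the exact identity $\mathcal{J}(\sum\pm Z_{x,y})=\{\mathcal{J}(X),\mathcal{J}(Y)\}$ in $E(L_\hbar)$, and your bijection for colorings that only cut $X$--$X$ or $Y$--$Y$ pairs are the paper's Steps 1--4. One correction there: the tensor factors below the fused slot form a shuffle of components from \emph{both} $X$ and $Y$, not from ``the lower link''.

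The gap is exactly the step you flag as the obstacle, and neither of your remedies works.
\begin{itemize}
\item The argument of Lemma~\ref{bial} needs the $X$-edges to form closed cycles and all $Y$-heights to lie above all $X$-heights. In the fused knot $N^{i,j}$ the $X$-part is an open arc, so the telescoping sum of $c(a)-c(a+1)$ over it picks up a boundary term at the resolved crossing. The heights of $X$ and $Y$ are also interleaved, so the ``$<0$'' half fails too. Such colorings genuinely exist in $\lab^0_p(N^{i,j})$ for individual $(i,j)$, and they do not cancel in pairs between consecutive exchanges.
\item The fallback does not help either. Terms carrying an extra factor $h$ would be harmless, since they lie in $\ker\mathcal{J}$. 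But the non-primitive contributions are not of that kind: they are summands of $\mathcal{J}(N^{i,j})$ itself, with coefficients $\pm\hbar^{|c|/2}$, where $h$ has already been sent to $0$. Reordering the exchanges only changes the heights inside the single knots $N^{i,j}$. Two height assignments on one knot differ by $\hbar$-multiples via \eqref{quiver skein relations 2}, which $\mathcal{J}$ detects. No extra power of $h$ ever appears, so the non-primitive part must vanish exactly.
\end{itemize}

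What is missing is a cancellation mechanism across \emph{different} crossings (the paper's Step 5).
\begin{itemize}
\item Group the non-primitive summands of $\sum_{i,j}\varepsilon_i\mathcal{J}(N^{i,j})$ by the total cut datum $(I,\phi)$, meaning the cut set together with the resolved pair. It contains $n\ge 2$ cross pairs $e_{r_1},\dots,e_{r_n}$. After $\mathrm{symm}$, the tensor factor depends only on $(I,\phi)$, not on which cross pair was resolved.
\item In $N^{r_i,j}$ the heights of the first $i-1$ cross pairs have been swapped, so the sign is $\varepsilon_{r_i}(-1)^{|c_-|}=(-1)^{i-1}(-1)^{|(I,X*Y)_-|}$.
\item The pieces cut out by $(I,\phi)$ form a directed graph $\Gamma$. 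Its only cycle $a_1\to\cdots\to a_n\to a_1$ comes from the cross pairs, with disjoint trees from the internal cuts attached.
\item The admissible colorings of $N^{r_i,j}$ with datum $(I,\phi)$ are in bijection with the linear orders of the tree $\Gamma_i$, obtained by deleting the $i$-th cycle edge and reversing the earlier ones.
\end{itemize}
These counts are all positive and vary with $i$; for a bare $3$-cycle they are $1,2,1$. So only the alternating sum vanishes, and that is Turaev's identity $\sum_i(-1)^i|\mathrm{Ord}(\Gamma_i)|=0$ (Lemma~\ref{graph}). Without this graph-counting identity and the sign bookkeeping, your argument does not show the non-primitive part vanishes, and so does not prove the proposition.
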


\begin{proof}[The quantization part in Theorem~\ref{quantization varepsilon}]
By Corollary~\ref{E}, the map
$
\symm|_{E( L_\hbar)}:
E( L_\hbar)\to\varepsilon_\hbar(L_\hbar)$ is
an isomorphism of Poisson bialgebras.
By Proposition~\ref{quantization}, the map $\mathcal{J} \colon 
\mathbf{N}(Q)_{h,\hbar} \to E( L_\hbar)$ is a 
quantization.
Thus their composition 
$p_\hbar = \symm|_{E( L_\hbar)} 
\circ \mathcal{J}$
is also a quantization.
\end{proof}
\subsubsection{The quantization $p_\hbar$ is reduced}\label{reducon}

To finish the proof of Theorem~\ref{quantization varepsilon}, we show that the quantization $p_\hbar$ is reduced, i.e., $\ker p_\hbar=h \mathbf{N}(Q)_{h,\hbar}$.
This equation is a corollary
of the lemma below.
Note that the map $\mathcal{J}$ and hence the map $p_\hbar$ is $\pi$-linear with $\pi:k[h,\hbar]\to k[\hbar].$
The map $p_\hbar$ naturally induces $\widetilde{p}_\hbar:\mathbf{N}(Q)_{h,\hbar}/h\mathbf{N}
(Q)_{h,\hbar} \to \varepsilon_\hbar(L_\hbar)$.

\begin{lemma}\label{xiq-1=0}
There exists an algebra homomorphism $\xi :\varepsilon_\hbar(L_\hbar) \to 
\mathbf{N}(Q)_{h,\hbar}/h\mathbf{N}(Q)_{h,\hbar}$
such that for any element 
$X\in \na$,
\[
(\xi \circ \widetilde{p}_\hbar - \id)^{n+1}([X]) = 0,
\quad\text{ for some $n\in\mathbb N$,}
\]
where $[X]\in \na/h\na$ denotes the equivalence
class of $X$. 
\end{lemma}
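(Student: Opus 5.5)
We construct $\xi$ on generators and prove the nilpotency by a filtration argument.

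\textbf{Definition of $\xi$.} Since $\varepsilon_\hbar(L_\hbar)=S(L_\hbar)$ is a free commutative $k[\hbar]$-algebra on $L$, a $k[\hbar]$-linear map on $L$ extends uniquely to an algebra map once the target is commutative. The target $\na/h\na$ is commutative, because by \eqref{quiver skein relations 1} we have $X*Y=Y*X$ modulo $h$.

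For a necklace $\alpha=a_1\cdots a_l\in L$, set $\xi(\alpha):=[(a_1,1)(a_2,2)\cdots(a_l,l)]$, the knot with increasing heights. For a vertex $v$, set $\xi(v):=[v]$. Different cyclic rotations of $\alpha$ give different height orderings, and these may differ modulo $h$ by $\hbar$-multiples of two-component terms via \eqref{quiver skein relations 2}. To make $\xi$ well defined, we fix, for each necklace class, one chosen representative rotation; any fixed choice suffices.

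$\xi$ is then extended $k[\hbar]$-linearly and multiplicatively.

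\textbf{Filtration.} Filter $\na/h\na$ by the number of crossing pairs, $F_n:=\operatorname{span}_{k[\hbar]}\{[X]: |T_X|\le n\}$. By Proposition~\ref{formmodh}(2), every class is a $k[\hbar]$-combination of product links with lexicographically ordered heights, so $F_n$ is spanned by such links. Each relation \eqref{quiver skein relations 2} replaces $X$ by $X'$, with the same $T$, plus $\hbar X''$, which has strictly fewer crossing edges, since $f$ and $f^*$ are removed. Hence reordering heights inside a knot changes a class only modulo $\hbar F_{n-1}$.

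\textbf{Key step: $(\xi\circ\widetilde p_\hbar-\id)(F_n)\subset F_{n-1}$.} With $F_{-1}=0$, this gives $(\xi\circ\widetilde p_\hbar-\id)^{n+1}[X]=0$ for $n=|T_X|$.

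Take a product link $X=K_1\&\cdots\&K_r$, with ordered heights and $|T_X|=n$. By Lemma~\ref{buhuizaiyong1} and the multiplicativity of $\mathrm{symm}|_{E}$ and $\xi$, we have $\xi\widetilde p_\hbar(X)=\prod_i\xi\widetilde p_\hbar(K_i)$.

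Now reduce to one knot $K$ with underlying necklace $\alpha$. As in the proof of Lemma~\ref{map}, $p_\hbar(K)=\alpha+\sum_i u_i$, where each $u_i\in\hbar\,\varepsilon_\hbar(L_\hbar)$ is a product of necklaces with total crossing count $<|T_K|$. The $\alpha$ term comes from $\Delta^0$; the others come from colorings with $|c|>0$, and each such cut deletes a pair $a,a^*$.

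By construction, $\xi$ sends a monomial of necklaces with total crossing count $m$ into $F_m$. Hence $\xi(u_i)\in F_{|T_K|-1}$. Also $\xi(\alpha)-[K]\in\hbar F_{|T_K|-1}$ by the height-reordering remark above.

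For the product, $\prod_i(\,[K_i]+F_{|T_{K_i}|-1})\subset[X]+F_{n-1}$. This uses $|T_X|\ge\sum_i|T_{K_i}|$ (the crossings between different knots are only dropped) and $F_aF_b\subset F_{a+b}$.

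\textbf{Main obstacle.} The delicate point is the bookkeeping of $|T|$. One must check that the filtration is multiplicative, and that the $\hbar$-correction terms in $\mathcal J$ — coming from colorings in $\lab^0_n$, with symmetrization coefficients — really lower the total crossing count. This holds because every cut removes both $a$ and $a^*$ from the edge multiset, and the multiset of edges bounds $|T|$.

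A second subtlety is making $\xi$ well defined on necklace classes. If the chosen-representative approach causes trouble, I would instead define $\xi(\alpha)$ as the average over all rotations. The averaged version still differs from $[K]$ only by $\hbar F_{n-1}$, so the filtration argument goes through unchanged.
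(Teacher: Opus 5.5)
Your overall route is the paper's: lift each necklace to a knot, extend multiplicatively using that $\na/h\na$ is commutative, and show that $\xi\circ\widetilde p_\hbar-\id$ strictly lowers the number of crossing edges, so it is nilpotent. (The paper inducts on $|T_Y|$ for knots $Y$ and passes to products by multiplicativity; your filtration on links packages the same idea.)

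\textbf{The gap.} The step where you assemble the knots into a product link rests on a false claim: the filtration $F_n$ by $|T|$ is \emph{not} multiplicative. Since $T_{X*Y}=T_X\cup T_Y\cup T_{X,Y}$, a product can have crossing edges that neither factor has. For the Jordan quiver, the knots $(a,1)$ and $(a^*,1)$ both lie in $F_0$, but their product has $|T|=2$. It is also not in $F_0$. The map $q_h\circ p_\hbar$ kills $h$, and it sends this product to the quadratic monomial $a\cdot a^*\in S^2(L)$ built from the two length-one necklaces. By contrast, every element of $F_0$ goes to a combination of monomials whose total arrow multiset contains no reverse pair. So the inclusion $\prod_i([K_i]+F_{|T_{K_i}|-1})\subset[X]+F_{n-1}$ is false as a statement about sets. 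An arbitrary element of $F_{|T_{K_1}|-1}$ may cross $K_2$ far more often than $K_1$ does, and the inequality $|T_X|\ge\sum_i|T_{K_i}|$ cannot control this.

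\textbf{The fix.} The conclusion $(\xi\circ\widetilde p_\hbar-\id)(F_n)\subset F_{n-1}$ is still true, but you must use more about the correction terms than their membership in $F_{|T_{K_i}|-1}$. Every term of $\xi\widetilde p_\hbar[K_i]-[K_i]$ is a link whose arrow multiset is $P_{K_i}$ with at least one pair $\{a,a^*\}\subset P_{K_i}$ deleted. This covers both the $\xi(u_i)$ and the $\hbar$-terms produced by reordering heights via \eqref{quiver skein relations 2}. After multiplying by the remaining $[K_l]$, the arrow multiset is contained in $P_X$ minus that pair. Hence no new crossings appear and $|T|\le|T_X|-2$.

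Alternatively, replace $F_n$ by the filtration $G_m$ spanned by links with at most $m$ arrows. It does satisfy $G_aG_b\subset G_{a+b}$, and each correction lowers the arrow count by at least $2$. Also $\xi\circ\widetilde p_\hbar-\id$ vanishes on $G_1$, so your product argument then goes through verbatim.
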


\begin{proof} 
Recall that $\varepsilon_\hbar(L_\hbar) = S(L_\hbar)$ as algebras.
Notice that the algebra $\mathbf{N}(Q)_{h,\hbar}/h\mathbf{N}(Q)_{h,\hbar}$ is commutative by (\ref{quiver skein relations 1}).
We are to construct a $k[\hbar]$-linear map $\xi: L_\hbar \to \na/h\na$ and check that the induced algebra morphism on $\varepsilon_\hbar(L_\hbar)$, again denoted $\xi$, is the desired map.

For any cyclic path $\alpha \in  L$, let $X_\alpha\in\na$ be any lifting of $\alpha$ and $\xi$ be the $k[\hbar]$-linear map given by $\alpha \mapsto X_\alpha$.
We show that the extension of $\xi$ to $\varepsilon_\hbar(L_\hbar)$ is the map.
As $\widetilde p_\hbar$ and $\xi$ are multiplicative, we may assume that $Y\in \na$ is a knot.
By the computation in Lemma~\ref{map}, for the cyclic path $\langle Y\rangle\in L$ underlying $Y$, we have $\xi\circ \widetilde p_\hbar[Y] = [X_{\langle Y\rangle}] + \sum_i [X_{u_i}]$ with the number of self-crossing edges (see Notation~\ref{sign}) satisfying $|T_{X_{u_i}}| < |T_{Y}|$ for all $i$. 
By the proof of Proposition~\ref{remark:general form2}~(2), each summand in $[X] - [X_{\langle Y\rangle}]$ has $<|T_Y|$ self-crossing edges.
If $|T_{Y}| = 0$, then $\xi\circ\widetilde p_\hbar[Y] = [X_{\langle Y\rangle}] = [Y]$.
Inductively, if we assume that $k$ exists such that $(\xi \circ \widetilde{p}_\hbar - \id)^{k}([Y]) = 0$ holds for any $Y$ with $|T_Y|<n$, then for $Y$ with $|T_Y|=n$, such $k$ also exists for $(\xi\circ \widetilde{p}_\hbar - \id)([Y])$ as all of its summands have less self-crossing edges than $Y$.
\end{proof}

\begin{proof}[The reducedness part in Theorem~\ref{quantization varepsilon}]
The inclusion $h\mathbf{N}(Q)_{h,\hbar} \subset \ker p_\hbar$ is straightforward.
It suffices to show that $\widetilde{p}_\hbar$ is injective. 
Suppose $\widetilde{p}_\hbar([X]) = 0$ for some $[X] \in \mathbf{N}(Q)_{h,\hbar}/h\mathbf{N}(Q)_{h,\hbar}$.
We have 
\[(\xi \circ \widetilde{p}_\hbar - \id)([X]) = \xi \circ \widetilde{p}_\hbar([X])-[X] = 0 - [X] = -[X].\]
Applying $(\xi \circ \widetilde{p}_\hbar - 1)^n$ to both sides of this equality, 
by Proposition~\ref{xiq-1=0}
we have  
\[
0 = (\xi \circ \widetilde{p}_\hbar - \id)^{n+1}([X]) 
= (\xi \circ \widetilde{p}_\hbar 
- \id)^n(-[X]) = (-1)^{n+1} [X],
\]
which implies $[X] = 0$.  
Thus $\widetilde{p}_\hbar$ is injective.
\end{proof}

\subsection{Proof of quantization, part 2}\label{sec:quantizationpart2}

The goal of this subsection
is to show Proposition~\ref{quantization}.
By Proposition~\ref{remark:general form2}, 
it suffices to show the following equation for any 
two knots \(X\) and \(Y\):
\begin{equation}
\label{eq:Jofcommutator}
X*Y-Y*X=h\mathcal{J}^{-1}
\{\mathcal{J}(X),\mathcal{J}(Y)\}\mod 
h\ker\mathcal{J}.
\end{equation}

We divide the proof of \eqref{eq:Jofcommutator} into 
five steps.
We show that
the left hand side consists of
two parts: 
the first part, called the primitive part, 
will be shown to be equal to the right 
side;
the rest, called the non-primitive part, 
will be proved to be zero.
The proof 
is inspired by  
\cite[Section~13]{Turaev}.
\vspace{1mm}

\noindent\textbf{Step 1: Rewrite 
$\mathcal{J}(X*Y-Y*X)$ as the sum $\mathcal{J}
\left(h\sum{\varepsilon_i}N^{i,j}\right)$ of knot 
terms.}
\vspace{1mm}

Let $e_{1},\ldots,e_\alpha$ be edges of $X$ such that $\{e_1,\ldots,e_\alpha\} = T_{X, Y}\cap P_X$.
For each $e_i$, let $e^*_{i,1},\ldots,e^*_{i,k_i}$ be edges in $T_{X,Y}\cap P_Y$ such that all $e_{i,j}^*$ are the reserve of $e_i$ in $\overline{Q}$ 
and all $e_{i,j}^*$ are ordered so that: the height of $e_{i,j-1}^*$ $<$ the height of $e_{i,j}^*$.
Put \begin{align}
\label{varepsilon_i meishayongde}
\varepsilon_{i}\coloneqq 
\begin{cases}
1,& \textup{ if } e_{i}\in Q
\cap{T_{X, Y}}\cap P_X,\\
-1,& \textup{ if } e_{i}
\in Q^*\cap T_{X, Y} \cap P_X.
\end{cases}
\end{align}
If $\varepsilon_i = -1,$ then $e_{i,j}^* \in Q$ for any $j$ but has height larger than that of $e_{i}$ (cf. \eqref{formula:c-}).
Let \((X*Y)^{i,j}\) be the element obtained from \(X*Y\) following the sequential height exchange below using the skein relations~\eqref{quiver skein relations 1} while preserving the original height orders within \(X\) and \(Y\):
\begin{enumerate}[label = \rm{(}\arabic*),noitemsep,leftmargin=*,nosep]
    \item First, sequentially exchange the height of \(e_l\) with those of \(e^*_{l,1}, e^*_{l,2}, \dots, e^*_{l,k_l}\) for each \(l = 1, \dots, i-1\). Note if there is an edge, say $e$, that has height between $e_{l}$ and $e^*_{l,1}$, then one exchanges the heights of $e_l$ and $e$ without producing any new summand by \eqref{quiver skein relations 1};
    \item Second, sequentially exchange the height of \(e_i\) with those of \(e^*_{i,1}, e^*_{i,2}, \dots, e^*_{i,j}\). 
\end{enumerate}
Put \((X*Y)^{1,0} = X*Y\) and \((X*Y)^{i,0} = (X*Y)^{i-1,k_{i-1}}\) for $i\geq 2$.
By \eqref{quiver skein relations 1}, we have
\begin{equation}\label{N^ij}
    (X*Y)^{i,j-1}-(X*Y)^{i,j}=h\varepsilon_{i}N^{i,j}\text{ for }j=1,\ldots,k_i
\end{equation}
where \(N^{i,j}\) denotes the knot obtained from \(X*Y\) by first resolving the crossing edges \(e_i\) and \(e^*_{i,j}\) following relation~\eqref{quiver skein relations 1} and then sequentially exchanging the heights of \(e_l\) with those of  \(e^*_{l,1},e^*_{l,2},\dots,e^*_{l,k_l}\) for each \(l = 1, \dots, i-1\), while preserving the original height order within \(X\) and within \(Y\). 
Now observe the telescoping sum:
\begin{align}
X*Y - Y*X 
&= \bigl(X*Y-(X*Y)^{1,1}\bigr) + \bigl((X*Y)^{1,1}-(X*Y)^{1,2}\bigr) + \cdots\nonumber \\
&\quad + \bigl((X*Y)^{1,k_1-1}-(X*Y)^{1,k_1}\bigr) + \bigl((X*Y)^{1,k_1}-Y*X\bigr)\nonumber\\
&=
\displaystyle
h\sum_{j=1}^{k_1}\varepsilon_1 N^{1,j} + \bigl((X*Y)^{1,k_1}-Y*X\bigr)\nonumber\\
&=
\displaystyle
h\sum_{\substack{1\leq i\leq \alpha\\ 
1\leq j\leq k_i}} \varepsilon_i N^{i,j} 
+ \bigl((X*Y)^{\alpha,k_\alpha} 
- Y*X\bigr)\nonumber \\
&=
\displaystyle
h\sum_{\substack{1\leq i\leq \alpha\\ 
1\leq j\leq k_i}} \varepsilon_i N^{i,j}.
\label{formula:X*Y-Y*X}
\end{align}
This finishes Step~1.
By \eqref{eq:Jofcommutator}, it remains to show
   \begin{equation}\label{8}
          \mathcal{J}\left(\textstyle\sum_{i,j}{\varepsilon_i}N^{i,j}\right)=\{\mathcal{J}(X),\mathcal{J}(Y)\}.
   \end{equation}

\vspace{1mm}
\noindent\textbf{Step 2: Expand $\{\mathcal{J}(X),\mathcal{J}(Y)\}$.}
\vspace{1mm}

By \eqref{j}, we compute 
\[\{\mathcal{J}(X),\mathcal{J}(Y)\}
= 
\sum_{m,n\geq 0}\{q^{m}\circ \Delta^{m-1}(X),q^{n}\circ \Delta^{n-1}(Y)\}.\]
By Lemma~\ref{Lie} and \eqref{buhuizaiyong}, we have
\begin{align}
\{\mathcal{J}(X),\mathcal{J}(Y)\}\nonumber
&=\sum_{m,n\geq 0}\sum_{\substack{d\in \mathrm{Col}_m^0(X)\\
d'\in \mathrm{Col}_n^0(Y)}}\sum_{\substack{1\leq k\leq m\\1\leq l\leq n}}\sum_{\substack{\sigma\in \mathrm{Shuf}(k-1,l-1)\\\varsigma\in\mathrm{Shuf}(m-k,n-l)}}
\langle X,d\rangle\langle Y,d'\rangle\\& \hspace{1.3cm}\langle W_{d,d'}^{1,k+l-2}(\sigma)\rangle\otimes[\langle X^k\rangle,\langle Y^l\rangle]_{L_{\hbar}}\otimes \langle W_{d,d'}^{k+l+1,m+n}(\varsigma)\rangle,
\label{formula:Poissonbracketcompute}
\end{align}
with notations explained as follows:
\begin{itemize}[noitemsep,leftmargin=*,nosep]
    \item For colorings $d,d'$ and group elements $\sigma,\varsigma$ as in the equation, if $X^1,\ldots,X^m$ and $Y^1,\ldots,Y^n$ are components given by $d,d'$ and $X,Y$, then we write 
\begin{align*}
   \langle W_{d,d'}^{1,k+l-2}(\sigma)\rangle& \coloneqq\sigma\big(\langle X^1\rangle\otimes \cdots\otimes  \langle X^{k-1}\rangle\otimes \langle Y^1\rangle\otimes \cdots\otimes \langle Y^{l-1}\rangle\big);\\
   \langle W_{d,d'}^{k+l+1,m+n}(\varsigma)\rangle & \coloneqq\varsigma\big(\langle X^{k+1}\rangle\otimes \cdots\otimes  \langle X^{m}\rangle\otimes \langle Y^{l+1}\rangle\otimes \cdots\otimes \langle Y^{n}\rangle\big).
\end{align*}
\item Let $[\langle X^k\rangle,\langle 
Y^l\rangle]_{L_{\hbar}}$ denote 
the Lie bracket 
of $\langle X^k\rangle$ and $\langle Y^l\rangle$ 
in $L_{\hbar}$.
\end{itemize}

We study $[\langle X^k\rangle,\langle 
Y^l\rangle]_{L_{\hbar}}$ given $(I_d,\phi_d,d)\in 
\lab_m^0(X)$ and $(I_d',\phi_d',d')\in 
\lab_n^0(Y).$
Let $s_i$ and $s^*_{i,j}$ be
the integers such that $e_i\in \langle 
X^{s_i}\rangle$ and $e_{i,j}^* \in \langle 
Y^{s^*_{i,j}}\rangle$ for any $i,j$. 
If $e_{i}\in I_d$ or $e_{i,j}^*\in I_{d'}$ for some $i,j$, then $e_{i}$ does not appear in $X^{s_i}$ or $e_{i,j}^*$ does not appear in $Y^{s^*_{i,j}}.$
In this case, we regard that $s_i$ or $s_{i,j}^*$ do not exist.
Then 
by Theorem~\ref{L}
we have 
\[[\langle X^k\rangle,\langle 
Y^l\rangle]_{L_{\hbar}} 
\coloneqq \sum_{\{i\,\mid\,s_i = k\}}
\sum_{
\{j\,\mid\,s_{i,j}^*=l\}}\varepsilon_i[\langle 
X^k\rangle,\langle 
Y^l\rangle]_{(e_{i},e_{i,j}^*)},\]
where the lower index $(e_{i},e_{i,j}^*)$ means 
computing the Lie bracket in $L_\hbar$ by pairing $e_{i}$ and $e_{i,j}^*$ regardless of the sign (the sign is given by $\varepsilon_i$).
The bracket $[\langle X^k\rangle,\langle Y^l\rangle]_{(e_{i},e_{i,j}^*)}$ is nonzero only if both $s_i$ and $s_{i,j}^*$ exist.
Notice $[\langle X^k\rangle,\langle Y^l\rangle]_{L_{\hbar}} = 0$ if $X^k$ contains no $e_i$ or $Y^l$ contains no $e_{i,j}^*$.
Using the Kronecker delta $\delta$, equation 
(\ref{formula:Poissonbracketcompute}) is 
rewritten as
\begin{align}
 & \{\mathcal{J}(X),\mathcal{J}(Y)\}\nonumber\\
&= \sum_{m,n\geq 0}\sum_{\substack{d\in \mathrm{Col}_m^0(X)\\d'\in \mathrm{Col}_n^0(Y)}}\sum_{i,j}\sum_{\substack{1\leq k\leq m\\1\leq l\leq n}}\sum_{\substack{\sigma\in \mathrm{Shuf}(k-1,l-1)\\\varsigma\in\mathrm{Shuf}(m-k,n-l)}}
\langle X,d\rangle\langle Y,d'\rangle\nonumber \\
& \quad\quad\quad\langle W_{d,d'}^{1,k+l-2}(\sigma)\rangle\otimes\left(\delta_{k}^{s_i}\delta_{l}^{s_{i,j}^*}\cdot \varepsilon_i [\langle X^k\rangle,\langle Y^l\rangle]_{e_i,e_{i,j}^*}\right)\otimes \langle W_{d,d'}^{k+l+1,m+n}(\varsigma)\rangle.
\label{formula:Poissonbracketcompute2}
\end{align}

\vspace{1mm}
\noindent\textbf{Step 3:  Compute $\mathcal{J}\left(\sum{\varepsilon_i}N^{i,j}\right)=\text{primitive part}+\text{nonprimitive part}$.}
\vspace{1mm}

We compute $\mathcal{J}\left({\varepsilon_i}N^{i,j}\right)$ using the definition of $\mathcal{J}$:
\begin{align}\label{11}   
\mathcal{J}\left(\sum_{i,j}{\varepsilon_i}N^{i,j}\right)
&=\sum_{i,j}{\varepsilon_i}\mathcal{J}(N^{i,j})= \sum_{i,j}\varepsilon_i\bigg (\sum_{p\geq 0 }q^{\otimes p}\circ\Delta^{p-1} (N^{i,j})\bigg)\nonumber\\ 
&= \sum_{i,j}\varepsilon_i\sum_{p\geq 1}\sum_{(I_c,\phi_c,c)\in \lab_p^0(N^{i,j})}\langle N^{i,j},c\rangle \langle N^{i,j}\rangle_c^1\otimes\cdots \otimes  \langle N^{i,j}\rangle_c^p,\\
   &\hspace{3.3cm}\text{(by the definition of $\Delta^{-1}$ in Definition~\ref{coproduct})}\nonumber
\end{align}
where we use $\langle N^{i,j}\rangle_c^k$ to abbreviate $\langle (N^{i,j})_c^k\rangle$.
Note $T_{X*Y}=T_X\cup T_Y\cup T_{X, Y}$. 
Any coloring $(I_c,\phi_c,c)$ in (\ref{11}) are of two types:
\begin{enumerate}[label = \rm{(}\arabic*),noitemsep,leftmargin=*,nosep]
\label{fenlei}
\item[(1)] (``primitive colorings'') If the cutting points \( I_{c} \) consists of elements only from \( T_X \cup T_Y \), then the coloring $(I_c,\phi_c,c) \in \lab_p^0(N^{i,j})$ is said to be \textit{primitive}. 
The set consisting of all such primitive colorings is denoted by  $\lab_p^{\text{\rm{pr}}}(N^{i,j})$. 
\item[(2)] (``non-primitive colorings'') Otherwise, these colorings are said to be \textit{non-primitive}. 
The set consisting of all such non-primitive colorings is denoted by $\lab_p^{\text{\rm{np}}}(N^{i,j})$. 
\end{enumerate}

\vspace{1mm}
\noindent\textbf{Step 4: Prove  
  the primitive part  of $\mathcal{J}\left(\sum{\varepsilon_i}N^{i,j}\right)$ equals $\{\mathcal{J}(X),\mathcal{J}(Y)\}$.} 
\vspace{1mm}

Comparing  
(\ref{formula:Poissonbracketcompute2}) and (\ref{11}), we fix $p,i,j$ and show
\begin{align}
&\varepsilon_i\sum_{(I_c,\phi_c,c)\in \lab_p^{\text{\rm{pr}}}
(N^{i,j})}\langle N^{i,j},c\rangle \langle 
N^{i,j}\rangle_c^1\otimes\cdots \otimes  
\langle N^{i,j}\rangle_c^p \nonumber \\
&=
\sum_{\substack{m,n\geq 0\\m+n=p+1}}\sum_{\substack{d\in \mathrm{Col}_m^0(X)\\d'\in \mathrm{Col}_n^0(Y)}}\sum_{\substack{1\leq k\leq m\\1\leq l\leq n}}\sum_{\substack{\sigma\in \mathrm{Shuf}(k-1,l-1)\\\varsigma\in\mathrm{Shuf}(m-k,n-l)}}
\langle X,d\rangle\langle Y,d'\rangle\nonumber \\
&\quad\quad\quad \langle W_{d,d'}^{1,k+l-2}(\sigma)\rangle\otimes\left(\delta_{k}^{s_i}\delta_{l}^{s_{i,j}^*}\cdot \varepsilon_i [\langle X^k\rangle,\langle Y^l\rangle]_{e_i,e_{i,j}^*}\right)\otimes \langle W_{d,d'}^{k+l+1,m+n}(\varsigma)\rangle.
\label{formula:primitivepartcompute}
\end{align}
The proof is similar to Lemma~\ref{bij}.
Given any colorings $d = (I_d,\phi_d,d)$, $d' = (I_d',\phi_d',d')$, and group elements $\sigma,$ $\varsigma$, we are to find a primitive coloring $(I_c,\phi_c,c)\in \mathrm{Col}_p^{\mathrm{pr}}(N^{i,j}).$
We may assume that $s_i$ and $s_{i,j}^*$ both exist (that is $e_{i}\notin I_d$ and $e_{i,j}^*\notin I_d'$), and $k=s_i$, $l = s_{i,j}^*$.
Notice $P_{N^{i,j}} = P_X \cup P_Y - \{e_i,e_{i,j}^*\}.$
Put $I_{c}\coloneqq I_d \sqcup I_{d'}$ and $\phi_c \coloneqq \phi_{d} \sqcup \phi_{d'}$.
Write
\begin{align*}
    \langle W_{d,d'}^{1,k+l-2}(\sigma)\rangle = \langle W^{1}\rangle\otimes \cdots \otimes \langle W^{k+l-2}\rangle\text{ and }\langle W_{d,d'}^{k+l+1,m+n}(\varsigma)\rangle = \langle W^{k+l+1}\rangle\otimes \cdots \otimes \langle W^{m+n}\rangle.
\end{align*}
Put
\[c:P_{N^{i,j}} \to \{1,\ldots,p\};\quad c(a) 
= \begin{cases}
r, &\textup{if } a\in P_{W^r}\text{ for }r\leq k+l-2,\\
k+l-1, &\textup{if } 
a\in P_{X^k} \sqcup P_{Y^l} - \{e_{i},e_{i,j}^*\},\\
r-1, &\textup{if } 
a\in P_{W^r}\text{ for }r\geq k+l+1,\\
c(a'), &\textup{if } 
a\in I_{c},
\end{cases}\]
where we define $c(a')$ analogously 
to $c^1(x')$ in \eqref{formula:coass1}. 
By $I_c$ and $\phi_c$ we have
\[\langle N^{i,j},c\rangle = 
(-1)^{|c|_-}\hbar^{|c|/2} = (-1)^{|d|_-}
\hbar^{|d|/2}\cdot (-1)^{|d'|_-}\hbar^{|d|/2} = 
\langle X,d\rangle\langle Y,d'\rangle.\] It follows 
from this equation that $d\in \lab_m^0(X)$ and 
$d'\in \lab_n^0(Y)$ by a similar argument as in 
Lemma \ref{bij}.
Hence, the summand corresponding to $(I_c,\phi_c,c)$ 
equals the one corresponding to 
$d,d',\sigma,\varsigma.$ 

Conversely, we are given a primitive coloring $(I_c,\phi_c,c)\in \mathrm{Col}^{\mathrm{pr}}_{p}(N^{i,j}).$
We are to define $m,$ $n,$ $(I_d,\phi_d,d)$, $(I_{d'},\phi_{d'},d')$, $k,$ $l,$ $\sigma,$ $\varsigma$ as in \eqref{formula:primitivepartcompute} for $m+n=p+1$.
Now let $m$ be the number of $\langle N^{i,j}\rangle_c^r$ for all $r$ having edges in $X.$
Similarly, we have the number $n$ such that $m+n = p+1.$ 
Put $I_d \coloneqq I_c \cap P_X$, $I_{d'} \coloneqq I_c \cap P_Y$, $\phi_d \coloneqq \phi_c|_{I_d}$, and $\phi_d' \coloneqq \phi_c|_{I_d'}.$
We may write $(N^{i,j})_c^{x_1},\ldots,(N^{i,j})_c^{x_m}$ (resp. $(N^{i,j})_c^{y_1},\ldots,(N^{i,j})_c^{y_n}$) for the components containing edges in $X$ such that $x_1<\cdots<x_m$ (resp. in $Y$ such that $y_1<\cdots<y_n$).
Then we put 
\[d:P_{X}\to \{1,\ldots,m\};
\quad d(a) = \begin{cases}r, 
&\textup{if } a\in (N^{i,j})_c^{x_r},\\
c(a'), &\textup{if } a\in I_d,\end{cases}\]
where we define $c(a')$ analogously to $c^1(x')$ in \eqref{formula:coass1}.
Following the same way, we define $d'.$
Next, let $\bar p$ be the number so that $(N^{i,j})_c^{\bar p}$ has edges (with heights) in both $X$ and $Y$.
Then let $k$ be the number of elements in $\{(N^{i,j})_c^{1},\ldots,(N^{i,j})_c^{\bar p}\}$ that have edges in $X$.
Similarly, we get $l$ for $Y$.
Finally, if $(N^{i,j})_c^{x_1},\ldots,(N^{i,j})_c^{x_k}$ with $x_1<\cdots<x_k =\bar p$ (resp. $(N^{i,j})_c^{y_1},\ldots,(N^{i,j})_c^{y_l} $ with $ y_1<\cdots<y_l=\bar p$) contains edges in $X$ (resp. $Y$), then let $\sigma$ be the shuffle in $\mathrm{Shuf}(k-1,l-1)$ such that $\sigma(r) = x_r$ for $r\leq k-1$ and $\sigma(r) = y_{r-k+1} $ for others.
The element $\varsigma$ is defined in the same way.
Similarly, the summand corresponding to $d,d',\sigma,\varsigma$ equals the one corresponding to $(I_c,\phi_c,c).$ 

The construction gives a bijection between the sets of summands from respectively left and right side of the desired equation (\ref{formula:primitivepartcompute}). Given a primitive coloring $(I_c,\phi_c,c)\in \mathrm{Col}^{\mathrm{pr}}_{p}(N^{i,j}),$ we get $m,$ $n,$ $(I_d,\phi_d,d)$, $(I_{d'},\phi_{d'},d')$, $k,$ $l,$ $\sigma,$ $\varsigma$, and then $(I_{c'},\phi_{c'},c')\in \mathrm{Col}^0_{p}(N^{i,j})$. The coloring $(I_{c'},\phi_{c'},c')$ is primitive since $I_{c'}=I_d\sqcup I_{d'}$ where  $I_d\subset P_X$ and $I_{d'}\subset P_Y$. And we get $I_c=I_{c'}$ since  $I_{c'}=I_d\sqcup I_{d'}= (I_c \cap P_X) \sqcup (I_c \cap P_Y)=I_c$. Thus, we also get $\phi_{c'}=\phi_c$. 
For $c$ and $c'$, notice that getting $d(a)=r$ from $c$ is to finding $r = \sigma^{-1}(x_r)$ from $x_r = c(a)$.
If $a$ is an edge in $X$ and not in $Y$, we have $c(a) = x_r$ if $a\in \langle X\rangle_{d}^{\sigma^{-1}(x_{r})} = W^{x_{r}}$ (that is $d(a) = \sigma^{-1}(x_{r})$).
In this way, we show $c=c'$ except on the edges in $I_c$. 
Since the coloring on $I_c$ is uniquely given by the values on $N^{i,j}-I_c$ due to the construction of $c(a')$, we have $c=c'$.

Similarly, given $m,$ $n,$ $(I_d,\phi_d,d)$, $(I_{d'},\phi_{d'},d')$, $k,$ $l,$ $\sigma,$ and $\varsigma$, we get $(I_c,\phi_c,c)\in \lab_p^{\mathrm{pr}}(N^{i,j})$, and then $m_1,$ $n_1,$ $(I_{d_1},\phi_{d_1},d_1)$, $(I_{d_1'},\phi_{d_1'},d_1')$, $k_1,$ $l_1,$ $\sigma_1,$ and $\varsigma_1$. 

We have \[\langle W_{d,d'}^{1,k+l-2}(\sigma)\rangle=\langle W_{d_1,d_1'}^{1,k_1+l_1-2}(\sigma_1)\rangle\text{ and }\langle W_{d,d'}^{k+l+1,m+n}
(\varsigma)\rangle=\langle W_{d_1,d_1'}^{k'+l'+1,m'+n'}(\varsigma_1)\rangle,\] which implies $k_1=k,$ $l_1=l,$ $\sigma_1=\sigma,$ $\varsigma_1=\varsigma,$ $m_1=m,$ $n_1=n$, and $X_d^{q_1}=X_{d_1}^{r_1},$ $Y_{d'}^{q_2}=Y_{d^{(3)}}^{r_2}$ for all possible $r_1,r_2$. 
We have $I_{d_1}=I_d,I_{d_1'}= I_{d'}$ by
\[I_d\sqcup I_{d'}=I_c=(I_c\cap P_X)\sqcup (I_{c}\cap P_Y)= I_{d_1}\sqcup I_{d_1'}\]
and $\phi_{d_1}=\phi_d,$ $\phi_{d_1'}= \phi_{d'}$. 
This implies that $(I_{d_1},\phi_{d_1},d_1)$ and $(I_{d_1'},\phi_{d_1'},d_1')$ are both primitive. 
By $\langle N^{i,j},c\rangle = \langle X,d_1\rangle\langle Y,d_1'\rangle$, we have $(I_{d_1},\phi_{d_1},d_1)\in \lab^0_m(X),(I_{d_1'},\phi_{d_1'},d_1')\in \lab_n^0(Y)$ and 
Since $X_d^{q_1}=X_{d_1}^{q_1}$ for all possible $q_1$, we have $d=d_1$  on  $X-I_d$, and furthermore, we get $d=d_1$ on $X$. Similarly, we have $d'=d_1'$.
 
\vspace{1mm}
\noindent\textbf{Step 5: Show the non‑primitive part  of $\mathcal{J}(\sum{\varepsilon_i}N^{i,j})$ equals 0.}
\vspace{1mm}

By Step~3, it suffices to show
\[\sum_{i,j}\varepsilon_i\sum_{p\geq 1}\sum_{(I_c,\phi_c,c)\in \lab_p^{\mathrm{np}}(N^{i,j})}\langle N^{i,j},c\rangle \langle N^{i,j}\rangle_c^1\otimes\cdots \otimes  \langle N^{i,j}\rangle_c^p = 0.\]
Here $\langle N^{i,j},c\rangle = (-1)^{|c|_-}\hbar^{|c|/2}$.
To rewrite the left side,
\begin{itemize}[noitemsep,leftmargin=*,nosep]
    \item 
Notice that each $N^{i,j}$ has exactly one component.
By Notation~\ref{0}, we have $|c| = |I_c| = 2(p-1)$.
Let $I_{c}'$ be the adjoint union of $I_{c}\sqcup\{e_i,e^*_{i,j}\}$ and $\phi_{c}'$ the extension of $\phi_c$ by $\phi_c'(e_i) = e_{i,j}^*.$ 
We have $|I_c'| = 2p$.
    \item
Consider pairs $(I,\phi)$, where $I$ is a subset of $T_{X*Y}$ and $\phi$ is an involution of $I$ such that for any $a\in I\cap Q$, we have $\phi(a)$ belonging to $Q^*$ and reversing $a$. 
Let $\mathcal{S}_{X*Y}$ be the set of all pairs $(I,\phi)$ such that there are some $i,j,p$ such that $(I,\phi) = (I_c',\phi_c')$ for some $(I_c,\phi_c,c)\in \lab_p^{\mathrm{np}}(N^{i,j})$.
Given $(I,\phi)\in \mathcal{S}_{X*Y},$ let $r_1,\ldots,r_n$ be indices such that $\{e_{r_1},\ldots,e_{r_n}\} = I\cap \{e_{1},\ldots,e_\alpha\}.$
\end{itemize}
Rewrite the left of the desired equation as
\begin{align*}
\sum_{(I,\phi)\in \mathcal{S}_{X*Y}}\sum_{\substack{i=1,\ldots,n}}\varepsilon_{r_i}\sum_{\substack{(I_c,\phi_c,c)\in \lab_{|I|/2}^{0}(N^{r_{i},j})\\(I_c',\phi_c') = (I,\phi)}}(-1)^{|c_-|}\hbar^{|I|/2} 
\langle N^{r_{i},j}\rangle_c^1\otimes\cdots \otimes  \langle N^{r_{i},j}\rangle_c^{|I|/2}
\end{align*}
with $j$ being the unique index satisfying $e_{r_i,j}^* = \phi(e_{r_i}).$
Here we also note that $(I_c,\phi_c,c)\in \lab_{|I|/2}(N^{r_{i},j})$ and $(I_c',\phi_c') = (I,\phi)$ implies the coloring $(I_c,\phi_c,c)$ is non-primitive.
Applying the map $\mathrm{symm}$ (this is an isomorphism by Theorem~\ref{symmisomorphism}), by linearity we have
\begin{align*}
\sum_{(I,\phi)\in \mathcal{S}_{X*Y}}\sum_{\substack{i=1,\ldots,n}}\varepsilon_{r_i}\sum_{\substack{(I_c,\phi_c,c)\in \lab_{|I|/2}^{0}(N^{r_{i},j})\\(I_c',\phi_c') = (I,\phi)}}(-1)^{|c_-|}\hbar^{|I|/2} 
\mathrm{symm}\left(\langle N^{r_{i},j}\rangle_c^1\otimes\cdots \otimes  \langle N^{r_{i},j}\rangle_c^{|I|/2}\right) .
\end{align*}
Notice that the factor $\mathrm{symm}
(\langle N^{r_{i},j}\rangle_c^1\otimes\cdots 
\otimes  \langle N^{r_{i},j}\rangle_c^{|I|/2})$ 
depends only on $(I,\phi)$ and not on the 
heights.
To show the desired equation, we 
fix 
$(I,\phi)\in \mathcal{S}_{XY}$ and show 
\begin{align}
\sum_{\substack{i=1,\ldots,n}}
\sum_{\substack{(I_c,\phi_c,c)\in 
\lab_{|I|/2}^{0}(N^{r_{i},j})\\(I_c^i,\phi_c^i) 
= (I,\phi)}}\varepsilon_{r_i}
(-1)^{|(I,N^{r_i,j})_-|} = 
0.\label{formula:nonprimitivefixedI}
\end{align}
Here we write $|(I,N^{r_i,j})_-|$ for $|c_-|$ 
because $|c_-|$ depends only on $I$ and 
$N^{r_i,j}$ (see the paragraph before 
Definition~\ref{coproduct}).
Note the factor 
$\mathrm{symm}(\langle 
N^{r_{i},j}\rangle_c^1\otimes\cdots 
\otimes  \langle N^{r_{i},j}
\rangle_c^{|I|/2})$ can also be obtained 
from $X*Y$. 
We can rewrite the sign part in 
(\ref{formula:nonprimitivefixedI}).

\begin{lemma}
For fixed $(I,\phi)\in \mathcal{S}_{X*Y}$, 
let $|(I,X*Y)_-|$ be the number of edges in 
$I\cap Q$ that have heights in $X*Y$ larger 
than the corresponding edges via $\phi$ in 
$Q^*$.
We have 
\begin{equation}
\varepsilon_{r_i}(-1)^{|(I,N^{r_i,j})_-|}=(-1)^{i-1}(-1)^{|(I,X*Y)_-|}.
\nonumber
\end{equation}
\end{lemma}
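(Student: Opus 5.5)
We compare the two sign counts $|(I,N^{r_i,j})_-|$ and $|(I,X*Y)_-|$ edge by edge. Every pair $\{a,\phi(a)\}\subset I$ with $a\in Q$ contributes $0$ or $1$ to each count, according to whether $a$ lies above $\phi(a)$ in the relevant element. The pair $\{e_{r_i},e^*_{r_i,j}\}$ was resolved in $N^{r_i,j}$ and is therefore not part of $I_c$, so it contributes only to $|(I,X*Y)_-|$. Since every edge of $X$ lies below every edge of $Y$ in $X*Y$, this pair contributes $1$ exactly when $e_{r_i}\in Q^*$, that is, when $\varepsilon_{r_i}=-1$. This explains the factor $\varepsilon_{r_i}$: it turns $(-1)^{[\text{contribution of the resolved pair in }X*Y]}$ into $1$ after we multiply by $\varepsilon_{r_i}$. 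The lemma therefore reduces to showing that the remaining pairs of $I$ contribute counts to $N^{r_i,j}$ and to $X*Y$ whose parities differ by exactly $i-1$.

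First we use the construction of $N^{r_i,j}$ in Step~1 to see which heights differ from those in $X*Y$. The knot $N^{r_i,j}$ keeps the height orders inside $X$ and inside $Y$. Relative to $X*Y$, the only changes come from moving $e_l$ above all of $e^*_{l,1},\dots,e^*_{l,k_l}$ for every $l<r_i$ (the full sequence of exchanges, as in (1) of Step~1); the partial exchanges of $e_{r_i}$ itself concern only the resolved pair. Hence a pair $\{a,\phi(a)\}\subset I$ with one edge in $X$ and one in $Y$ changes its relative order exactly when its $X$-edge is $e_l$ with $l<r_i$. Pairs contained in $T_X$ or in $T_Y$ keep their order. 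So the parity difference equals the number of mixed pairs of $I$ whose $X$-edge is some $e_l$ with $l<r_i$, with the single exception of the resolved pair.

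Next we count those pairs. By definition of $r_1<\dots<r_n$, the edges of $I\cap\{e_1,\dots,e_\alpha\}$ that are smaller than $r_i$ are exactly $e_{r_1},\dots,e_{r_{i-1}}$. Each of them lies in $I$ and is paired by $\phi$ with some $e^*_{r_t,j'}\in P_Y$, since $\phi(e_{r_t})$ is a reverse edge and the elements of $T_{X,Y}$ are the crossing edges. Each $e_{r_t}$ is strictly below its partner in $X*Y$ and strictly above it in $N^{r_i,j}$, so it flips exactly once. The number of flipped pairs is therefore $i-1$, and we get $(-1)^{|(I,N^{r_i,j})_-|}=(-1)^{i-1}(-1)^{|(I,X*Y)_-|}\varepsilon_{r_i}$. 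Since $\varepsilon_{r_i}^2=1$, this is the claimed identity.

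The main obstacle is the bookkeeping in the first step. We have to check that the sequential exchanges really leave the order of every other mixed pair unchanged: the intermediate "free" swaps in Step~1(1) with edges lying between $e_l$ and $e^*_{l,1}$ must not reverse any pair of $I$. We also have to confirm that an $e_l$ with $l<r_i$ that is not in $I$ has no effect on the count. I would handle this by tracking only the relative position of each $e_l$ against its own reverses $e^*_{l,\cdot}$: by \eqref{quiver skein relations 1}, every other transposition involves non-reverse edges and contributes to no $c_-$ count.
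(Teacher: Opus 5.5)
Your argument is correct and follows the same route as the paper. The resolved pair $\{e_{r_i},e^*_{r_i,j}\}$ is counted only on the $X*Y$ side, where it contributes exactly the factor $\varepsilon_{r_i}$. The only other pairs of $I$ whose relative heights differ between $X*Y$ and $N^{r_i,j}$ are the $i-1$ pairs $\{e_{r_t},\phi(e_{r_t})\}$ with $t<i$, and each of them contributes a $-1$. You make explicit the height bookkeeping that the paper leaves implicit.

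One correction to your justification. The fact that $e_{r_t}\in T_{X,Y}$ does not by itself force $\phi(e_{r_t})\in P_Y$, because an edge of $X$ may also have reverses inside $X$, and $\phi$ could pair it with one of those. The fact has to come from the set-up instead. There $j$ is defined by $e^*_{r_i,j}=\phi(e_{r_i})$, so the $r_t$, listed in increasing order, index precisely the mixed pairs of $I$.
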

\begin{proof}

Recall that $|(I,N^{r_i,j})_-|$ is the number of edges in $I\cap Q$ that have heights in $N^{r_i,j}$ larger than the corresponding edges via $\phi$ in $Q^*$.
Note the heights of $e_{r_k}$ and $e_{r_k,j}^* = \phi(e_{r_k})$ in $X*Y$ and $N^{r_{i},j}$ are different for $k<i$. 
The sign $\varepsilon_{r_i}(-1)^{|(I,N^{r_i,j})_-|}$ is the one corresponding to (in Defintion~\ref{coproduct}) any coloring of $X*Y$ having the fixed $(I,\phi)$ but with the heights of $e_{r_k}$ and $\phi(e_{r_k})$ interchanged. 
As each of $e_{r_1},\ldots,e_{r_{i-1}}$ produces a $-1$, this sign can be alternatively computed by $(-1)^{i-1}(-1)^{|(I,X*Y)_-|}$.
\end{proof}
By this lemma, the left hand side 
of \eqref{formula:nonprimitivefixedI} writes
\begin{align}
& \sum_{i=1}^{n}\sum_{\substack{(I_c,\phi_c,c)\in \lab_{|I|/2}^{0}(N^{r_{i},j})\\(I_c',\phi_c') = (I,\phi)}}\varepsilon_{r_i}(-1)^{|(I,N^{r_i,j})_-|}\nonumber \\
&=  
(-1)^{|(I,X*Y)_-|}\sum_{i=1}^{n}(-1)^{i-1}\left|\left\{(I_c,\phi_c,c)\in \left.\lab_{|I|/2}^{0}(N^{r_{i},j})\,\right|\, (I_c',\phi_c') = (I,\phi)\right\}\right|.
\label{formula:nonprimitivevanishment}
\end{align}
It suffices to show this equals zero.

We will use a result from Turaev \cite{Turaev}.
Let $\Gamma$ be a finite directed graph. 
For $v(\Gamma)$ the set of vertices of $\Gamma,$ an \emph{order} in a directed graph $\Gamma$ is a bijective map $\omega:v(\Gamma) \to \{1,\ldots,m\}$ for $m$ the cardinal of $v(\Gamma)$ such that $\omega(a) > \omega(b)$ if and only if there is an edge $ab = a\to b$ in $\Gamma$. 
Let $\mathrm{Ord}(\Gamma)$ denote the set of all orders on $\Gamma$.

\begin{lemma}[{Turaev \cite[Lemma 13.10]{Turaev}}]\label{graph}
Let $\Gamma$ be as above.
For an integer $n \geq 2$, let $a_1,\ldots, a_n$ be certain
distinct vertices of $\Gamma$ cyclically connected by 
directed edges $a_1a_2,$ $a_2a_3,$$\ldots,$$a_na_1$. 
Let $r_1,\cdots, r_n$ denote the same $n$ edges numerated arbitrarily. 
For $i=1,\ldots,n$, $\Gamma_i^r$ be
the directed graph obtained from $\Gamma$ by removing 
the edge $r_i$ (but keep the corresponding two vertices) and reversing the orientations of 
$r_1,\ldots,r_{i-1}$. If $\Gamma_1^r$ is a tree, then
\[
\sum_{i=1}^n(-1)^i|\mathrm{Ord}(\Gamma_i^r)|=0.
\]
\end{lemma}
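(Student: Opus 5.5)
The plan is to reduce everything to one common set of vertex orderings and then exhibit an explicit sign-reversing pairing. Let $\Gamma^0$ be the directed graph obtained from $\Gamma$ by deleting all $n$ cycle edges $r_1,\ldots,r_n$, keeping every vertex. Every $\Gamma_i^r$ has the same vertex set $v(\Gamma)$ as $\Gamma^0$. Its edges are those of $\Gamma^0$ together with $r_1,\ldots,r_{i-1}$ reversed and $r_{i+1},\ldots,r_n$ with their original orientation. So I will regard each $\mathrm{Ord}(\Gamma_i^r)$ as a subset of the set of bijections $\omega:v(\Gamma)\to\{1,\ldots,m\}$ that respect the edges of $\Gamma^0$, and cut it out by conditions on the cycle edges only. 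I read the definition of an order in the usual linear-extension sense: $\omega(a)>\omega(b)$ whenever $ab$ is an edge. The hypothesis that $\Gamma_1^r$ is a tree is only used to guarantee that the graphs are acyclic and the sets are the natural ones; the counting argument itself does not seem to need it.

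First, for a bijection $\omega$ respecting $\Gamma^0$, I call a cycle edge $r=ab$ \emph{consistent} if $\omega(a)>\omega(b)$, and let $S(\omega)\subset\{r_1,\ldots,r_n\}$ be the set of consistent cycle edges. Since the vertices $a_1,\ldots,a_n$ are distinct and joined cyclically by $a_1a_2,\ldots,a_na_1$, the values $\omega(a_k)$ cannot strictly decrease all the way around the cycle, nor strictly increase all the way around. Hence $S(\omega)$ is neither empty nor the whole set of cycle edges.

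Second, I translate membership in $\mathrm{Ord}(\Gamma_i^r)$ into a condition on $S(\omega)$. We have $\omega\in\mathrm{Ord}(\Gamma_i^r)$ if and only if $r_j\notin S(\omega)$ for all $j<i$ and $r_j\in S(\omega)$ for all $j>i$, with $r_i$ unconstrained because it has been removed. Consequently $\omega$ is counted in some $\mathrm{Ord}(\Gamma_i^r)$ only if $S(\omega)=\{r_m,r_{m+1},\ldots,r_n\}$ for some $m$, which is a final segment in the chosen numbering. The first step forces $2\le m\le n$.

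Third, I pair the terms and conclude. For such an $\omega$, the index $i$ with $\omega\in\mathrm{Ord}(\Gamma_i^r)$ must satisfy $\{r_{i+1},\ldots,r_n\}\subseteq S(\omega)\subseteq\{r_i,\ldots,r_n\}$. This holds exactly for $i=m$ and $i=m-1$, and both indices lie in $\{1,\ldots,n\}$ because $2\le m\le n$. Therefore each such $\omega$ contributes $(-1)^m+(-1)^{m-1}=0$ to $\sum_i(-1)^i|\mathrm{Ord}(\Gamma_i^r)|$, and every other $\omega$ contributes nothing, so the sum vanishes.

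The main point needing care is the boundary cases $S(\omega)=\varnothing$ or $S(\omega)$ equal to all cycle edges, which would leave an unpaired term at $i=1$ or $i=n$. These are excluded precisely by the cyclicity argument in the first step, so that is where I would be most careful, including checking that the distinctness of $a_1,\ldots,a_n$ is genuinely used.
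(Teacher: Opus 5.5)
Your proof is correct. The paper gives no proof of this lemma; it only cites Turaev, so there is no in-paper argument to compare against. Your double count is a complete, self-contained proof. Each linear extension $\omega$ of $\Gamma^0$ lies in some $\mathrm{Ord}(\Gamma_i^r)$ only if $S(\omega)=\{r_m,\ldots,r_n\}$ is a final segment. In that case it lies in exactly $\mathrm{Ord}(\Gamma_{m-1}^r)$ and $\mathrm{Ord}(\Gamma_m^r)$, and the directed cycle forces $2\le m\le n$, so the two contributions cancel.

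As you observe, the argument never uses that $\Gamma_1^r$ is a tree. It also places no restriction on how the $r_j$ are numbered or on parallel edges in $\Gamma$. So the tree hypothesis in the stated lemma is superfluous for the identity itself.

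What your argument does depend on is reading ``order'' as a linear extension ($\omega(a)>\omega(b)$ whenever $ab$ is an edge). The paper's definition literally says ``if and only if.'' The linear-extension sense is the one the paper actually uses when it matches the colorings in $C_{(I,\phi),i}$ with order-preserving bijections of the poset $P_{(I,\phi),i}$. Under the literal reading, pairs of vertices with no edge between them would also constrain $\omega$. Your reduction to conditions on the cycle edges alone would then break down, and without the tree hypothesis the identity would actually fail. For example, take a directed triangle $a_1a_2,a_2a_3,a_3a_1$, numbered cyclically, plus one extra edge $a_2a_1$. The literal reading gives $-1+0-0$, whereas linear extensions give $-1+2-1=0$. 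So your explicit choice of interpretation is the right one, and it is worth keeping stated up front.
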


The proof of the desired equation, that is,
(\ref{formula:nonprimitivevanishment}) being zero, 
will be accomplished by applying Turaev's lemma to 
the direct graph $\Gamma$ in the lemma below.
We assume that the set
\[C_{(I,\phi),i} \coloneqq \left\{(I_c,\phi_c,c)\in \left.\lab_{|I|/2}^{0}(N^{r_{i},j})\,\right|\, (I_c',\phi_c') = (I,\phi)\right\}\]
is nonempty for some $i$;
otherwise, 
\eqref{formula:nonprimitivevanishment} is automatically zero.

\begin{lemma}
Given $(I,\phi)\in\mathcal{S}_{X*Y}$ such that $C_{(I,\phi),i} \neq \varnothing$ for some $i$, there exists a finite directed graph $\Gamma$ satisfying the following conditions.
\begin{enumerate}[label = \rm{(}\arabic*),noitemsep,leftmargin=*,nosep]
\item $\Gamma$ has $|I|/2$ vertices.

\item Some vertices $a_{1},\ldots,a_{n}$ of $\Gamma$ determine cycle $a_{1}a_2,\ldots,a_na_1$ in $\Gamma$, which is moreover the only directed cycle in $\Gamma$.

\item The rest of the directed edges in $\Gamma$ are determined by for each $i=1,\ldots,n,$ one disjoint tree associated to each vertex $a_i.$

\item For each $i=1,\ldots,n$, let $\Gamma_i$ be the tree obtained from $\Gamma$ by erasing the edge $a_ia_{i+1}$ but keeping two vertices and inverting the direction of the edges $a_1a_2,\ldots,a_{i-1}a_{i}.$
Then there is a bijection between the set $C_{(I,\phi),i}$ and the set of orders $\omega$ in $\Gamma_i$.
\end{enumerate}
\end{lemma}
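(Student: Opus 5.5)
The plan is to build $\Gamma$ directly from the cutting data $(I,\phi)$ on $X*Y$, in the spirit of Turaev's proof of \cite[Lemma 13.10]{Turaev} and its application in \cite[Section~13]{Turaev}. First I would take as vertices of $\Gamma$ the $|I|/2$ unordered pairs $\{a,\phi(a)\}$ with $a\in I\cap Q$ (the ``cutting points''), which gives (1). Second I would cut $X*Y$ simultaneously at all pairs of $I$ via the rule \eqref{formula:cutting}, i.e. form the orbits of the bijection $f$ from Definition~\ref{color}; this yields a collection of knots $Z_1,\dots,Z_{|I|/2}$, exactly the components $(N^{r_i,j})^1_c,\dots$ appearing in $C_{(I,\phi),i}$ (this does not depend on $i$, since cutting $N^{r_i,j}$ at $I_c$ equals cutting $X*Y$ at $I_c'=I$). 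Because each element of $C_{(I,\phi),i}$ lies in $\lab^0$, we have $|c|/2=-\|c\|$, and so every cut increases the number of components by exactly one; the knots $Z_s$ together with the cutting points form a connected ``gluing pattern'' that is a tree on the $X$-side and a tree on the $Y$-side, joined along the pairs $\{e_{r_k},\phi(e_{r_k})\}$.

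Third I would orient the edges. The coloring condition of Definition~\ref{color} states that at each cutting point $a,\phi(a)$ the color of the component following $a$ is forced to be larger or smaller according to the comparison of $\lambda(a)$ and $\lambda(\phi(a))$. I will encode this as follows: two cutting points are joined by a directed edge when they are consecutive boundary points of the same component $Z_s$, with the direction given by the height comparison. The nonprimitive pairs $e_{r_1},\dots,e_{r_n}$ each join an $X$-piece to a $Y$-piece, so in $X*Y$, where all of $Y$ lies above $X$, they close up into the unique cycle $a_1\to a_2\to\cdots\to a_n\to a_1$. The remaining cutting points lie in $T_X\cup T_Y$ and hang off this cycle as disjoint trees. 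This gives (2) and (3). To make the count of vertices match the number of colors, I expect to have to pass from components to cutting points in the usual way for trees: a tree with $p$ vertices has $p-1$ edges.

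Fourth, for (4): passing from $X*Y$ to $N^{r_i,j}$ means resolving the crossing $e_{r_i}$ (deleting that cycle edge) and interchanging the heights of $e_{r_1},\dots,e_{r_{i-1}}$ with those of their partners (reversing exactly those edges). A coloring in $C_{(I,\phi),i}$ is then a bijective assignment of the colors $1,\dots,|I|/2$ compatible with all the height inequalities, which is exactly an order on $\Gamma_i$. Injectivity and surjectivity follow because the colors on $I_c$ are determined by the colors on the non-cut edges, as in Lemma~\ref{bij}.

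I expect the main obstacle to be the bookkeeping in the third step: showing that the cycle is the only cycle and is consistently directed, using the fact that all heights of $Y$ exceed those of $X$, and that $\Gamma_1$ is indeed a tree, so that Lemma~\ref{graph} applies.
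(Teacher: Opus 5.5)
\textbf{There is a genuine gap: your $\Gamma$ has its vertices and edges the wrong way round.} You take the cut pairs $\{a,\phi(a)\}$ as vertices and join two of them when they are consecutive on a component. The paper does the opposite, and the statement forces that choice.

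The vertices must be the $|I|/2$ knots $Z_1,\dots,Z_{|I|/2}$ obtained by cutting $X*Y$ along all of $I$. Each pair $\{a,\phi(a)\}\subset I$ is then one directed edge, joining the two knots that meet at that cut and oriented by comparing $\lambda(a)$ with $\lambda(\phi(a))$. The reason is what an element of $C_{(I,\phi),i}$ actually is. Because it lies in $\mathrm{Col}^0_{|I|/2}(N^{r_i,j})$, each color is carried by exactly one knot, so it is a bijection $\{Z_s\}\to\{1,\dots,|I|/2\}$. The rule $c(a)>c(\phi(a))\Leftrightarrow \lambda(a)>\lambda(\phi(a))$ of Definition~\ref{color} is one inequality per cut pair, between the two knots on either side of that cut. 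So orders on the paper's graph are literally these colorings.

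With your graph, (2)--(4) fail:
\begin{itemize}
\item The ``height comparison'' belongs to a single pair $\{a,\phi(a)\}$, so it gives no orientation to an edge between two different pairs.
\item Any knot bounded by three or more cut pairs produces a polygon under the consecutive-boundary-point rule. For example, a ring knot $z_{k-1}z_k$ with a primitive cut attached to it gives a triangle. You would get many cycles, not one.
\item An order on your graph labels cut pairs, not knots, so it is not the data of a coloring.
\end{itemize}
The fact that both counts equal $|I|/2$ is a coincidence and cannot carry the constraints across. Your remark about ``passing from components to cutting points'' is exactly where the construction breaks.

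\textbf{Your global picture is also wrong.} The knots do not split into an $X$-side tree and a $Y$-side tree joined along the $n$ nonprimitive pairs; that configuration would have $n-1$ independent cycles. What the paper uses is the following.
\begin{itemize}
\item Cutting the nonprimitive pairs $z_1,\dots,z_n$, which are met in opposite cyclic orders along $X$ and along $Y$, turns $X$ and $Y$ into a ring of $n$ \emph{mixed} knots $z_{k-1}z_k$. Each consists of an arc of $X$ and an arc of $Y$.
\item Consecutive ring knots are joined by the edge $z_kz_k^*$. All these edges point the same way around the ring because $Y$ lies above $X$ in $X*Y$.
\item Every primitive pair of $I$ has both edges in the same ring knot. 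Otherwise cutting it would merge two components, which is incompatible with $|c|/2=-\|c\|$.
\end{itemize}
Hence each primitive cut splits a knot, which gives the disjoint trees of (3), the vertex count $|I|/2$ of (1), and the uniqueness of the directed cycle in (2). Once $\Gamma$ is built this way, your fourth step is the paper's argument for (4): passing to $N^{r_i,j}$ deletes the edge of $z_i$ and reverses the edges whose heights were swapped, and colorings become orders on $\Gamma_i$.
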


\begin{proof}
Let $z_1,\ldots,z_n$ denote a renumbering of $e_{r_{1}},\ldots,e_{r_{n}}$ so that we meet $z_n,\ldots,z_1$ when traversing along $X$ starting at some edge.
The edges related to $z_{k-1}$ and $z_{k}$ in $X,$ $Y$ are explicit as below, where red edges are from $X$ and blue edges are from $Y$.
\[
\begin{tikzpicture}[>=Stealth, line cap=round, line join=round]

\coordinate (LLLU)  at (-5,  1.0);
\coordinate (LLLD)  at (-5,  -1.0);
\coordinate (LLU)  at (-4,  1);
\coordinate (LLD)  at (-4,  -1);
\coordinate (L)  at (-3,  0);
\coordinate (ML)  at (-2,  0);
\coordinate (MLU)  at (-1, 1.0);
\coordinate (MRU)  at (1,  1.0);
\coordinate (MU)  at (0,  1.0);
\coordinate (MD)  at (0,  -1.0);
\coordinate (MLD)  at (-1, -1.0);
\coordinate (MRD)  at (1,  -1.0);
\coordinate (MR)  at (2, 0);
\coordinate (R)  at (3, 0);
\coordinate (RRU)  at (4,  1);
\coordinate (RRD)  at (4,  -1);
\coordinate (RRRU)  at (5,  1.0);
\coordinate (RRRD)  at (5,  -1.0);

\fill (LLU) circle (2pt);
\fill (LLD) circle (2pt);
\fill (L) circle (2pt);
\fill (ML) circle (2pt);
\fill (MLU) circle (2pt);
\fill (MRU) circle (2pt);
\fill (MLD) circle (2pt);
\fill (MRD) circle (2pt);
\fill (MR) circle (2pt);
\fill (R) circle (2pt);
\fill (RRU) circle (2pt);
\fill (RRD) circle (2pt);

\node at (MU) {\textcolor{red}{$\cdots$}};
\node at (MD) {\textcolor{blue}{$\cdots$}};
\node at (LLLU) {\textcolor{red}{$\cdots$}};
\node at (LLLD) {\textcolor{blue}{$\cdots$}};
\node at (RRRU) {\textcolor{red}{$\cdots$}};
\node at (RRRD) {\textcolor{blue}{$\cdots$}};

\draw[red,shorten >=0.35cm,shorten <=.2cm,->]  (MLU) -- (MU);
\draw[red,shorten >=0.2cm,shorten <=.35cm,->]  (MU) -- (MRU);

\draw[blue,shorten >=0.35cm,shorten <=.2cm,->]  (MRD) -- (MD);
\draw[blue,shorten >=0.2cm,shorten <=.35cm,->]  (MD) -- (MLD);

\draw[dashed,red,shorten >=0.2cm,shorten <=.2cm,->] ([yshift=2pt]L) -- node[pos=0.4, above] {\scriptsize \quad \text{\textcolor{black}{$z_k$}}} ([yshift=2pt]ML);
\draw[dashed,blue,shorten >=0.2cm,shorten <=.2cm,->] ([yshift=-2pt]ML)  -- node[pos=0.6, below] {\scriptsize \quad\,\text{\textcolor{black}{$z_k^*$}}} ([yshift=-2pt]L);

\draw[dashed,red,shorten >=0.2cm,shorten <=.2cm,->] ([yshift=2pt]MR)  -- node[pos=0.6, above] {\scriptsize \text{\textcolor{black}{$z_{k-1}$\quad}}} ([yshift=2pt]R);
\draw[dashed,blue,shorten >=0.2cm,shorten <=.2cm,->]  ([yshift=-2pt]R)-- node[pos=0.4, below] {\scriptsize \text{\textcolor{black}{$z_{k-1}^*$\quad}}}  ([yshift=-2pt]MR);

\draw[red,shorten >=0.2cm,shorten <=.2cm,->]  (ML)  -- node[pos=0.5, below] {\scriptsize } (MLU);
\draw[blue,shorten >=0.2cm,shorten <=.2cm,->]  (MLD)  -- node[pos=0.5, below] {\scriptsize } (ML);

\draw[red,shorten >=0.2cm,shorten <=.2cm,->]  (MRU) -- node[pos=0.5, above] {\scriptsize } (MR);
\draw[blue,shorten >=0.2cm,shorten <=.2cm,->]  (MR) -- node[pos=0.5, above] {\scriptsize } (MRD);


\draw[red,shorten >=.2cm,shorten <=0.35cm,->]  (LLLU) -- (LLU);
\draw[blue,shorten >=.35cm,shorten <=0.2cm,->]  (LLD) -- (LLLD);

\draw[red,shorten >=0.2cm,shorten <=.2cm,->]  (LLU)  -- node[pos=0.5, below] {\scriptsize } (L);
\draw[blue,shorten >=0.2cm,shorten <=.2cm,->]  (L)  -- node[pos=0.5, below] {\scriptsize } (LLD);


\draw[red,shorten >=0.2cm,shorten <=.2cm,->]  (R)  -- node[pos=0.5, below] {\scriptsize } (RRU);
\draw[blue,shorten >=0.2cm,shorten <=.2cm,->]  (RRD)  -- node[pos=0.5, below] {\scriptsize } (R);

\draw[red,shorten >=0.35cm,shorten <=.2cm,->]  (RRU) -- (RRRU);
\draw[blue,shorten >=0.35cm,shorten <=.2cm,->]  (RRRD) -- (RRD);

\end{tikzpicture}\]
Let $z_{k-1}z_{k}$ denote the knot given by the middle cycle consisting of solid arrows.
For each $z_{k-1}z_{k}$ we associate a vertex $a_{k-1}$ in the graph.
We have the following directed cycle in (2).
The directed edges are $a_{k-1}\to a_{k}$ because the height of $z_{k}^*$ is larger than than of $z_{k}$, where directed edges are denoted $z_1z_1^*,\ldots,z_nz_n^*$.
\begin{align}
\begin{split}\begin{tikzpicture}[thick, scale=1]
    \node at (-6,0) (1) {$a_1$};
    \node at (-3,0) (2) {$a_2$};
    \node at (-0,0) (3) {$a_{n-1}$};
    \node at (3,0) (4) {$a_{n}$};
    \draw[->] (1) node[above,xshift=1.5cm] {$z_2z_2^*$} -- (2);
    \draw[dashed] (2) -- (3);
    \draw[->] (3) node[above,xshift=1.7cm] {$z_nz_n^*$} -- (4);
    \draw[->] (3.4,0) .. controls (3.8,0) and (3.8,-.5) .. (3.4,-.5);
    \draw[-] (3.4,-.5) -- (-.9,-0.5);
    \draw (-.9,-.5) -- (-2,-0.5);
    \draw[-] (-2,-.5) -- (-6.4,-0.5);
    \draw[->] (-6.4,-.5) .. controls (-6.8,-.5) and (-6.8,0) .. (-6.4,0);
\end{tikzpicture}\label{formula:graph}
\end{split}
\end{align}

To see (3), for the given $(I,\phi)\in \mathcal{S}_{X*Y},$ there is no edge $e$ in $I \cap z_{k-1}z_{k}$ such that $\phi(e)\in z_{l-1}z_{l}$ for any $k\neq l$.
Indeed, if conversely such an edge $e$ exists, the pair of edges $e,$ then following the skein relation (\ref{quiver skein relations 1}), the pair $e$, $\phi(e)$ will result in a single knot in $\na$ given by $z_{k-1}z_{k}$ and $z_{l-1}z_{l}$.
This indicates that $(I,\phi,c)$ can not be in $\lab_p^{0}(N^{i,j})$ for any $i$ and any $c$, a contradiction.

By the previous paragraph, for $e$ in $I\cap z_{k-1}z_k,$ we have $\phi(e)\in z_{k-1}z_{k}.$
Hence a pair $e$, $\phi(e)$ will result in two knots from $z_{k-1}z_{k}$ following (\ref{quiver skein relations 2}).
We may treat two knots as two vertices in the graph. 
There is one directed edge between them and the direction is determined by the heights of $e$, $\phi(e)$.
Recursively, this results in a tree with $a_{k-1}$ being one of its vertex.
As $e,\phi(e)$ are both in $z_{k-1}z_{k}$ or both not, the directed trees are disjoint.
For all components given by $(I,\phi),$ each of them corresponds to a single vertex and hence $v(\Gamma) = |I|/2,$ as desired.

It remains to show (4).
As the heights of $e_{r_k}$ and $\phi(e_{r_k})$ in $N^{r_i,j}$ are interchanged for $k< i$, when follow the same method as getting $\Gamma$, we get $\Gamma_i$. The cycle (\ref{formula:graph}) in $\Gamma$ becomes
\begin{align*}\begin{tikzpicture}[thick, scale=1]
    \node at (-6,0) (1) {$a_1$};
    \node at (-4.5,0) (2) {$a_2$};
    \node at (-3,0) (3) {$a_{i-2}$};
    \node at (-1.5,0) (4) {$a_{i-1}$};
    \node at (-0,0) (5) {$a_{i}$};
    \node at (1.5,0) (6) {$a_{i+1}$};
    \node at (3,0) (7) {$a_{n-1}$};
    \node at (4.5,0) (8) {$a_{n}$};
    \draw[->] (2) node[above,xshift=-.7cm] {$z_2z_2^*$} -- (1);
    \draw[dotted] (2) -- (3);
    \draw[->] (4) node[above,xshift=-0.7cm] {$z_{i-1}z_{i-1}^*$} -- (3);
    \draw[->] (5) node[above,xshift=0.7cm] {$z_{i+1}z_{i+1}^*$} -- (6);
    \draw[dotted] (6) -- (7);
    \draw[->] (7) node[above,xshift=0.8cm] {$z_{n}z_{n}^*$} -- (8);
    \draw[<-] (4.9,0) .. controls (5.3,0) and (5.3,-.5) .. (4.9,-.5);
    \draw[-] (4.9,-.5) -- (-.9,-0.5);
    \draw (-.9,-.5) -- (-2,-0.5);
    \draw[-] (-2,-.5) -- (-6.4,-0.5);
    \draw[<-] (-6.4,-.5) .. controls (-6.8,-.5) and (-6.8,0) .. (-6.4,0);
\end{tikzpicture}\end{align*}
By Definition~\ref{color} of colorings, $(I,\phi)$ gives a partition $P_{(I,\phi)}$ of edges in $P_{X*Y}\setminus I$.
The elements of $P_{(I,\phi)}$ are components obtained from $X*Y$ by cutting out $(I,\phi)$.  Hence the construction of $\Gamma_i$ and $\Gamma$ from $X*Y$ gives a bijection $f_i:P_{(I,\phi)}\to v(\Gamma)\to v(\Gamma_i)$.
The heights in $N^{r_i,j}$ define a strict partial order on $P_{(I,\phi)}$ (e.g., $z_{k-1}z_{k}>z_{k}z_{k+1}$ because the height of $z_k^*$ is larger).
Indeed, the transitivity follows from that there is no directed cycle inside $\Gamma_i$ and the trees associated to $a_k$ are disjoint.
Let $P_{(I,\phi),i}$ denote the obtained poset with cardinal $|I|/2.$
For any $x>y\in P_{(I,\phi),i},$ we have a unique directed edge $f_i(x)\to f_i(y)$ in $\Gamma_i$. 
Note that each coloring of $N^{r_i,j}$ is an order preserving map from $P_{(I,\phi),i}$ to $\Bbb{Z}_{>0}$.
By the definition of $\lab_{|I|/2}^0(N^{r_i,j})$, the left
hand side map below is bijective
\[C_{(I,\phi),i} \longrightarrow \mathrm{Ord}(P_{(I,\phi),i}) \longrightarrow \mathrm{Ord}(\Gamma_i),\]
where $\mathrm{Ord}(P_{(I,\phi),i})$ consists of bijections $P_{(I,\phi),i} \to \{1,\ldots,|I|/2\}$ that preserve orders.
The right map is the bijection induced by $f_i:P_{(I,\phi),i}\to v(\Gamma_i)$.
The lemma therefore follows.
\end{proof}

\section{The coquantization $
\mathbf{N}(Q)_{h,\hbar} 
\to
V_h(L_h)$}\label{coquancon}

In this section, we introduce the map 
$p_h:\na\to V_h(L_h)$
and prove it is a reduced 
coquantization. 
The main result of this section is 
Theorem 
\ref{coquantization}.

Let $X$ be an 
element in $ \na$ 
and $[X]\in\na/\hbar\na$ its equivalence class. 
By Proposition 
\ref{remark:general form2}~(1), $[X]$ 
has a 
representative given by a 
$k[h]$-linear combination of product 
links in which heights all increase 
along the lexicographical order of the 
indices.
We want a map 
$\mathcal{P}:\na/\hbar\na \to 
V_h(L_h)$ to be linear over $k[h]$ and 
given by 
\begin{align*}
\begin{aligned}
& (a_{1,1},\lambda_{1,1})\cdots (a_{1,l_{1}},\lambda_{1,l_{1}}) 
\&
(a_{2,1},\lambda_{2,1})\cdots (a_{2,l_{2}},\lambda_{2,l_{2}})
\\ &\& \cdots  \& 
(a_{k,1},\lambda_{k,1})\cdots (a_{k,l_{k}},\lambda_{k,l_{k}}) 
\&v_{1} \& v_{2} \&\cdots \& v_{m}
\end{aligned}\mapsto
\begin{aligned}
&(a_{1,1}\cdots a_{1,l_1})\otimes(a_{2,1}\cdots 
a_{2,l_2})\otimes\cdots\\&\otimes(a_{k,1}\cdots 
a_{k,l_k})
\otimes v_1\otimes\cdots\otimes v_m,
\end{aligned}
\end{align*}
where $\lambda_{i,j} < \lambda_{i' ,j'}$ 
whenever ${(i,j)} < {(i' ,j')}$ in 
lexicographical order.

Since $\na$ is obtained as the quotient with
respect to the relations 
(\ref{quiver skein relations 1}) and 
(\ref{quiver skein relations 2}), 
we need to check the well-definedness of 
$\mathcal P$.

\begin{lemma}\label{5.1}
The above map $\mathcal{P}$ is a well-defined algebra morphism.
\end{lemma}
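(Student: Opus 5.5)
To make $\mathcal{P}$ well defined I will first define it on all of $\tilde{A}[h,\hbar]$ and then check that it kills $\tilde{B}[h,\hbar]$ and $\hbar$. The plan is to define an auxiliary $\pi'$-linear map $\widehat{\mathcal P}:\tilde{A}[h,\hbar]\to V_h(L_h)$, with $\pi':k[h,\hbar]\to k[h]$ sending $\hbar\mapsto 0$, on \emph{every} link $X$ of the form \eqref{form}, not only on sorted product links. The rule is to read the components in the order of their lowest heights. Set $\widehat{\mathcal P}(X):=\langle X_{\tau(1)}\rangle\cdots\langle X_{\tau(k)}\rangle\, v_1\cdots v_m$, where $\tau$ orders the knot components by increasing minimal height. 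The vertices commute with everything in $V_h(L_h)$, since $\{v,\cdot\}=0$ by Theorem~\ref{L}, which has no edges to pair. This is invariant under order-preserving relabelling of heights, so it descends to $\tilde{A}[h,\hbar]$.

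It agrees with the prescribed formula on representatives as in Proposition~\ref{remark:general form2}~(1). It kills $\hbar$ by $\pi'$-linearity. It is multiplicative, because in $X*Y$ all heights of $Y$ exceed those of $X$, so the components of $Y$ are read after those of $X$. It then suffices to show $\widehat{\mathcal P}(\tilde{B}[h,\hbar])=0$; once that holds, $\mathcal P$ is induced on $\na/\hbar\na$ and is an algebra morphism.

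For relation \eqref{quiver skein relations 2}, $\widehat{\mathcal P}$ of the $\hbar$-term vanishes. Swapping the heights of two edges of the same component can change that component's minimal height, but only between two adjacent values: no other edge lies between them. The relative order of the components is therefore unchanged and $\widehat{\mathcal P}(X)=\widehat{\mathcal P}(X')$.

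For relation \eqref{quiver skein relations 1}, swapping adjacent heights of edges in different components $i\neq i'$ changes the reading order only when both edges are the minimal-height edges of their components. In that case $\widehat{\mathcal P}(X)-\widehat{\mathcal P}(X')=\cdots(\langle X_i\rangle\langle X_{i'}\rangle-\langle X_{i'}\rangle\langle X_i\rangle)\cdots=h\cdots\{\langle X_i\rangle,\langle X_{i'}\rangle\}\cdots$ in $V_h(L_h)$. When the edges are not both minimal, the difference is $0$. In both cases it must be matched with $h\widehat{\mathcal P}(X'')$, where $X''$ is the single merged component carrying only the one-pair term $\langle a_{i,j},a_{i',j'}\rangle(\cdots)$. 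These do not match term by term: the bracket sums over all crossing pairs, while $X''$ records one pair, and in the non-minimal case $X''$ need not vanish.

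\textbf{This is the main obstacle}, and it shows the naive reading rule is wrong. My first remedy will be to define $\widehat{\mathcal P}$ on arbitrary links recursively, by the reduction algorithm in the proof of Proposition~\ref{remark:general form2}~(1). The algorithm sorts via \eqref{quiver skein relations 1} and \eqref{quiver skein relations 2} modulo $\hbar$, inducting on the pair $(|X|,m(X))$. Well-definedness then amounts to showing the result is independent of the order in which the swaps are performed.

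For that I would run a diamond-lemma style argument. Two swap sequences reaching the sorted form differ by local moves: commuting disjoint swaps, and braid-type triples of swaps among three edges. For each local move I check that the two resulting expressions agree in $V_h(L_h)$. These checks reduce to the relation $ab-ba=h\{a,b\}$, the Jacobi identity, and the fact that resolving two disjoint crossings in either order gives the same link; the last is the associativity content already used in Lemma~\ref{well}. The braid case, where three components meet and the merged terms must reproduce Jacobi, is where I expect the real work.

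Finally, $\mathcal P(X*Y)=\mathcal P(X)\mathcal P(Y)$ follows because sorting $X*Y$ can be done by sorting $X$ and $Y$ separately. Since all heights of $Y$ lie above those of $X$, no swaps mixing $X$ and $Y$ are ever needed.
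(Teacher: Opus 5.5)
Your analysis of the naive ``read components by minimal height'' rule is correct. So is your instinct to compute $\mathcal P$ through the reduction of Proposition~\ref{remark:general form2}(1) instead. But the proposal stops where the content begins. The verification you call ``the real work'' is never carried out, and the idea that answers your own objection is missing: one use of \eqref{quiver skein relations 1} records a single crossing pair, whereas $\{-,-\}$ sums over all of them. The answer is global rather than local. As in Step~1 of \S\ref{sec:quantizationpart2}, pushing a knot $Y$ entirely through a knot $X$ by successive height exchanges telescopes to \eqref{formula:X*Y-Y*X}, i.e.\ $X*Y-Y*X=h\sum_{i,j}\varepsilon_iN^{i,j}$. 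In this sum every crossing pair $(e_i,e^*_{i,j})$ is resolved exactly once, with coefficient $\varepsilon_i$. Hence $\sum_{i,j}\varepsilon_i\langle N^{i,j}\rangle$ is, term by term, the bracket $\{\langle X\rangle,\langle Y\rangle\}$ of Theorem~\ref{L}. That is the whole of the paper's proof: reduce to two knots with increasing heights, and observe that $\mathcal P$ sends $X*Y-Y*X-h\sum\varepsilon_iN^{i,j}$ to $h[\mathcal P(X),\mathcal P(Y)]-h\sum\varepsilon_i\langle N^{i,j}\rangle=0$, which is \eqref{formula:[alphabeta]}.

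In your rewriting framework this identity is exactly the non-routine input, and you have located the difficulty elsewhere. The reduction of Proposition~\ref{remark:general form2}(1) sorts towards a target with a \emph{chosen} order of components; the index $i$ is arbitrary, since $\&$ is symmetric. So besides independence of the swap sequence, you must prove independence of that choice. Exchanging two adjacent components of the target is precisely the comparison of $X*Y$ with $Y*X$ above. Your multiplicativity argument needs this as well, since ``no swaps mixing $X$ and $Y$'' presupposes a target order with the components of $X$ below those of $Y$.

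The local moves, by contrast, need no Jacobi identity. In a braid triple at three consecutive heights, each pair is swapped exactly once along either path, always from its original order, so the coefficients agree. The two resolved links coincide after order-preserving relabelling, because the third edge occupies the same slot once the adjacent pair is removed. For two commuting disjoint swaps, the $h$-terms of the two paths differ by $h^2\langle x,y\rangle\langle z,w\rangle$ times the difference of two copies of the same doubly resolved link, which vanishes. This uses the inductive hypothesis (on the number of edges) that $\mathcal P$ already respects the relations on links with fewer edges.

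So your worry about path independence is legitimate, and the paper's brief proof takes it for granted. But it is the routine half; the half you left open is the one carrying the content.
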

\begin{proof}
It suffices to consider (\ref{quiver skein relations 1}).
By the above expression of $\mathcal{P}$, 
we need to consider $\mathcal{P}([X*Y-Y*X])$ for 
arbitrary knots $X$ and $Y$ both satisfying 
$\lambda_{1,j}<\lambda_{1,j'}$ if $j<j'$.
As in Section~\ref{sec:quantizationpart2}~Step~1, 
by consecutively exchanging the heights in $X$ 
and $Y$ following 
(\ref{quiver skein relations 1}), we may 
get $Y*X$ from $X*Y$.
The equation (\ref{formula:X*Y-Y*X}) can be 
written
as
\[ 
X*Y - Y*X - h\sum_{\substack{1\leq 
i\leq \alpha\\ 1\leq j\leq k_i}} 
\varepsilon_i N^{i,j} = 0.\]
We show that the left side is mapped 
to zero under $\mathcal{P}.$
Note 
\[\mathcal{P}([X*Y-Y*X]) 
= \mathcal{P}([X])\mathcal{P}([Y]) 
- \mathcal{P}([Y])\mathcal{P}([X]) 
= h[\mathcal{P}([X]),\mathcal{P}([Y])],\]
where the bracket on the right side is in fact 
the Lie algebra bracket in $L_h$.
Let $\langle N^{i,j}\rangle$ denote the cyclic 
path underlying $N^{i,j}$ in $L$. 
By interchanging the heights of $N^{i,j}$ so that 
the heights increase from left to right, 
$\langle N^{i,j}\rangle$ is also the image under 
$\mathcal{P}.$
We have
\begin{align}
[\mathcal{P}([X]),\mathcal{P}([Y])]=\sum_{e^*_{i,j}\in 
T_{X,Y}}\varepsilon_{i}\langle N^{i,j}\rangle \label{formula:[alphabeta]}\end{align} because $\langle N^{i,j}\rangle$ is exactly the summand in 
$[\mathcal{P}([X]),\mathcal{P}([Y])]$ given by pairing 
$(e_i,e^*_{i,j})$.
We have shown that
\[\mathcal{P}([X*Y-Y*X]) = h[\mathcal{P}([X]),\mathcal{P}([Y])],\]
as desired.
\end{proof}

Let $p_h:\mathbf{N}(Q)_{h,\hbar}\to V_h(L_h)$ be the composition of maps 
\begin{equation}\label{p_h}
p_h:\mathbf{N(Q)_{h,\hbar}}
\xrightarrow{\textup{projection}}
\na/\hbar\na\overset{\mathcal{P}}{\longrightarrow } 
V_h(L_h).
\end{equation} 
Then it is an algebra morphism 
from $\na $ to $V_h(L_h)$.

\begin{proposition}
\label{coquan}
The algebra morphism $p_\hbar$ 
satisfies $\ker p_h=\hbar \mathbf{N}
(Q)_{h,\hbar}$ and induces \[\na/\hbar 
\na\cong V_h(L_h).\]
\end{proposition}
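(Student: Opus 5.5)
We need to show $\ker p_h=\hbar\na$ and that $\mathcal P$ induces an isomorphism $\na/\hbar\na\cong V_h(L_h)$. The statement says $p_\hbar$, but by context it concerns $p_h$ from \eqref{p_h}. By construction $p_h$ factors through the projection $\na\to\na/\hbar\na$, so $\hbar\na\subset\ker p_h$. It therefore suffices to prove that $\mathcal P:\na/\hbar\na\to V_h(L_h)$ is bijective. The approach mirrors the reducedness argument for $p_\hbar$ (Lemma~\ref{xiq-1=0}). This time the situation is cleaner, because we can build an honest two-sided inverse rather than a unipotent perturbation.

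\emph{Surjectivity.} $V_h(L_h)$ is generated as a $k[h]$-algebra by $L$, that is, by classes of cyclic paths $\alpha=a_1\cdots a_l$ and vertices $v$. Each such generator is the image under $\mathcal P$ of the knot $(a_1,1)(a_2,2)\cdots(a_l,l)$, respectively of $v$. Since $\mathcal P$ is an algebra morphism by Lemma~\ref{5.1}, surjectivity follows.

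\emph{Injectivity via an inverse.} Define $\eta:T(k[h]\otimes L)\to\na/\hbar\na$ on generators by $\alpha\mapsto[X_\alpha]$, where $X_\alpha$ is the knot with increasing heights lifting $\alpha$, and extend multiplicatively. To get a well-defined map on $L$, I would check two things.
\begin{enumerate}[label=\rm{(}\roman*),noitemsep,leftmargin=*,nosep]
\item $[X_\alpha]$ is independent of the chosen cyclic rotation of $\alpha$ and of the height pattern. This holds because in $\na/\hbar\na$ relation \eqref{quiver skein relations 2} reduces to \eqref{formula:overhbar}, which allows free permutation of heights within a single knot.
\item $\eta$ kills $ab-ba-h[a,b]$. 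This is exactly the telescoping identity \eqref{formula:X*Y-Y*X}, $X*Y-Y*X=h\sum\varepsilon_iN^{i,j}$, read modulo $\hbar$. Here each knot $N^{i,j}$ equals $X_{\langle N^{i,j}\rangle}$ in $\na/\hbar\na$ by (i), and $\sum\varepsilon_i\langle N^{i,j}\rangle=[\langle X\rangle,\langle Y\rangle]$ by \eqref{formula:[alphabeta]}.
\end{enumerate}
Hence $\eta$ descends to an algebra map $\eta:V_h(L_h)\to\na/\hbar\na$.

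\emph{Checking the compositions.} On generators, $\mathcal P\circ\eta=\id$ clearly holds, so $\mathcal P\circ\eta=\id$ since both maps are algebra maps. For $\eta\circ\mathcal P=\id$, Proposition~\ref{remark:general form2}(1) shows that $\na/\hbar\na$ is spanned over $k[h]$ by lexicographically height-ordered product links. Such a link is the product $X_1*\cdots*X_k*v_1*\cdots*v_m$ of knots and vertices, and $\eta\mathcal P$ fixes each factor by (i). Therefore $\eta\circ\mathcal P=\id$.

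The main obstacle is step (i) together with the well-definedness in the vertex and degenerate cases. One must make sure that permuting heights within one knot really is a relation modulo $\hbar$ even when the resolved term $X''$ involves vertices. One must also make sure that $\mathcal P$ itself, as asserted in Lemma~\ref{5.1}, respects the $\hbar$-relation: it is trivially satisfied mod $\hbar$, but the reordering of heights inside a knot must give the same image. I would handle both by noting that the $\hbar$-relation \eqref{quiver skein relations 2} becomes $X=X'$ modulo $\hbar$ regardless of the shape of $X''$, so all such issues disappear after quotienting.
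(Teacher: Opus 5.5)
Your proposal is correct and follows the paper's proof. Surjectivity comes from the increasing-height lifts $(a_1,1)\cdots(a_l,l)$. Bijectivity comes from the multiplicative inverse $\eta$, which descends to $V_h(L_h)$ because the Step~1 telescoping identity holds modulo $\hbar$. Your extra checks (that $[X_\alpha]$ is independent of rotation and height pattern modulo $\hbar$, and the use of Proposition~\ref{remark:general form2}(1) for $\eta\circ\mathcal{P}=\mathrm{id}$) supply details the paper dismisses with ``follow from the definitions.''
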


\begin{proof}
As $p_h$ is an algebra morphism, it suffices to find the preimage of all knots $\alpha$ as generators of the algebra $V_h(L_h)$.
Consider the following lift $X_\alpha$ of a knot $\alpha$:
\begin{align}
\textup{if } \alpha=a_1\cdots a_k\in L, 
\textup{ then } X_\alpha=(a_1,1)(a_2,2)\cdots(a_k,k)\in 
\na.\label{formula:Xalphaeta}\end{align}
We have $\mathcal{P}(X_\alpha)=\alpha$ and hence $p_h$ is surjective.

It remains to verify 
$\ker p_h=\hbar \mathbf{N}(Q)_{h,\hbar}$. 
We have   
$\hbar\mathbf{N}(Q)_{h,\hbar}
\subset\ker p_h$ because 
$p_h$ is linear over $k[h,\hbar]\to k[h];\,\,h\mapsto 0$. 
We show $\eta$ is the inverse of $\mathcal{P}$, which shows the desired bijectivity.
For any arbitrary cyclic path $\alpha\in L$, we extend the map $\alpha\to X_\alpha$ in \eqref{formula:Xalphaeta} to be algebra morphism so that we have 
canonical lift $X_\alpha\in\na$. 
This defines a $k[h]$-linear map $\eta:L_h\to \na/\hbar\na.$

We show that $\eta$ may multiplicatively extend to a map $V_h(L_h) \to \na/\hbar\na$.
It suffices to show that for the extended $\eta$, we have 
\begin{align}\eta(\alpha\beta - \beta\alpha - h[\alpha,\beta])=0\text{ for any cyclic paths }\alpha,\beta\in L.\label{formula:etamapstozero}\end{align}
Following Section~\ref{sec:quantizationpart2}~Step~1 with $X = \eta(\alpha)$ and $Y = \eta(\beta)$, we have (the index $\alpha$ is replaced with $n$)
\[\eta(\alpha)*\eta(\beta) - \eta(\beta)*\eta(\alpha) = h\sum_{\substack{1\leq i\leq n\\ 1\leq j\leq k_i}} \varepsilon_i N^{i,j}.\]
By interchanging the heights of $N^{i,j}$ following (\ref{quiver skein relations 2}), we may assume that the heights in $N^{i,j}$ increase from left to right.
Because moreover $\langle N^{i,j}\rangle$ (following the notation in 
(\ref{formula:[alphabeta]})) is exactly the 
summand in $[\mathcal{P}(\eta(\alpha)),
\mathcal{P}(\eta(\beta))] = [\alpha,\beta]$ 
given by pairing $(e_i,e^*_{i,j})$, we have 
\[\eta([\alpha,\beta]) 
= \sum_{\substack{1\leq i\leq n\\ 
1\leq j\leq k_i}} \varepsilon_i N^{i,j}.\]
The equation (\ref{formula:etamapstozero}) holds 
and we have an algebra morphism 
$\eta: V_h(L_h)\to \mathbf{N}
(Q)_{h,\hbar}/\hbar \mathbf{N}(Q)_{h,\hbar}$.
 
For such an
$\eta$, the identities 
$\eta\circ \mathcal{P}=\id$ and 
$\mathcal{P}\circ \eta=\id$ follow 
from the definitions of $\eta$. 
Thus, $\ker p_h=\hbar\mathbf{N}
( Q)_{h,\hbar}$.
\end{proof}

\begin{theorem}
\label{coquantization}
The map $p_h$ is a reduced coquantization of 
the co-Poisson bialgebra $V_h(L_h)$.
\end{theorem}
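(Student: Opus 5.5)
Since Proposition~\ref{coquan} already gives that $p_h$ is surjective with $\ker p_h=\hbar\,\na$, reducedness is settled. Three things remain to be proved: (i) $p_h$ is a coalgebra morphism, with $V_h(L_h)$ carrying the coproduct of Example~\ref{3.1}; (ii) $p_h$ is $\varphi$-linear for $\varphi:k[h,\hbar]\to k[h]$, $\hbar\mapsto0$; (iii) the coquantization congruence of Definition~\ref{def:coquan}. Item (ii) is immediate from the definition of $p_h$ as the projection followed by the $k[h]$-linear map $\mathcal P$.

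For (i), both $\Delta$ (Lemma~\ref{bial}) and $p_h$ (Lemma~\ref{5.1}) are algebra morphisms. By Proposition~\ref{remark:general form2}~(1), $\na/\hbar\na$ is generated over $k[h]$ by knots with increasing heights. It therefore suffices to check $(p_h\otimes p_h)\Delta(X_\alpha)=\alpha\otimes1+1\otimes\alpha$ for the lift $X_\alpha$ of \eqref{formula:Xalphaeta}. Take a $2$-coloring $c$ with $I\neq\varnothing$. Since $X_\alpha$ is a single knot, every cut pair $a,\phi(a)$ either splits a component or reconnects pieces. I will argue that in $\langle X_\alpha,c\rangle$ the exponent of $\hbar$, namely $(|c|/2-\|c\|)/2$, is positive whenever $|c|>0$. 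Equivalently, $\|c\|<|c|/2$, i.e.\ $|X^1_c|+|X^2_c|>1-|c|/2$. This holds trivially: the left side is at least $2$ when both colors are used, and if only one color is used the coloring condition $c(a)>c(\phi(a))\iff\lambda(a)>\lambda(\phi(a))$ forces $I=\varnothing$. Hence modulo $\hbar$ only the trivial colorings survive. These give $X\otimes1+1\otimes X$, and the knot is not disconnected into multiple colored pieces without cuts, since the colors are constant along the cycle when $I=\varnothing$. This yields primitivity.

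For (iii), I need $\Delta(X)-\perm\Delta(X)\equiv\hbar(p_h\otimes p_h)^{-1}\nu(p_h(X))$ modulo $\hbar\ker(p_h\otimes p_h)=\hbar^2(\na\otimes\na)$. The right side is a derivation in the sense of \eqref{copobialge}, and the left side, $\Delta-\perm\Delta$, is multiplicative up to the same rule modulo $\hbar^2$, since $\Delta(XY)=\Delta(X)\Delta(Y)$ and $\Delta\equiv\perm\Delta$ mod $\hbar$. So again it suffices to treat $X=X_\alpha$. Extracting the $\hbar^1$ coefficient of $\Delta(X_\alpha)$ means taking colorings with $|c|=2$ and $\|c\|=-1$: exactly one pair $a_i,a_j=a_i^*$ is cut, splitting $\alpha$ into two knots, one colored $1$ and the other colored $2$. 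The color assignment is forced by the heights, giving sign $(-1)^{|c_-|}$. Summing over the pairs $i<j$ with $\langle a_i,a_j\rangle\neq0$ and antisymmetrizing yields exactly the terms of \eqref{formula:cobracket}, as in the worked example after Definition~\ref{coproduct}.

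The main obstacle is the sign and orientation bookkeeping. I must verify that the rule "color $1$ goes to the lower-height side", combined with $(-1)^{|c_-|}$, reproduces $\langle a_i,a_j\rangle$ and the orientation of the wedge in \eqref{formula:cobracket}, including the degenerate case where a piece becomes a vertex. I would check this first on the two cases $a_i\in Q$ and $a_i\in Q^*$ with increasing heights, and confirm that the two orders give opposite signs, which the antisymmetrization absorbs. Finally, $V_h(L_h)$ is identified as a co-Poisson bialgebra via Turaev's Theorem~16.2.4 applied to $\na$, which completes the proof.
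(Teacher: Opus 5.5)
Your proposal is correct and follows essentially the same route as the paper's proof. You reduce to (lifts of) knots and note that a nonempty $I$ forces both colors and hence an $\hbar$-exponent $\geq 1$, so $\Delta(X)\equiv X\otimes 1+1\otimes X$ mod $\hbar$; you then identify the $\hbar^1$-terms as the single-pair cuts with $|c|=2$, $\|c\|=-1$, which match \eqref{formula:cobracket}. Your explicit co-Leibniz argument reducing the congruence to generators (using $\ker(p_h\otimes p_h)=\hbar\,\na^{\otimes 2}$) justifies a step the paper only asserts.

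The sign check you defer does go through, and no cancellation between ``two orders'' is needed: each pair $i<j$ with $a_j=a_i^*$ admits exactly one coloring, in which $a_j$ gets color $2$, the color-$2$ piece is $(a_i)_ta_{i+1}\cdots a_{j-1}$, and $(-1)^{|c_-|}=\langle a_i,a_j\rangle$. Your closing appeal to Turaev's Theorem~16.2.4 is unnecessary and misplaced: the co-Poisson structure on $V_h(L_h)$ is the one from Example~\ref{3.1}, whereas that theorem concerns $A/(h,\hbar)A$ and requires freeness.
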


\begin{proof}
By Proposition~\ref{coquan}, it suffices to check 
that $p_h$ is a coalgebra morphism and satisfies the 
coquantization condition. 
Each element in $\na/\hbar\na$ of the form 
(\ref{form}) has a representative which is a product 
of knots. 
We only need to check the case when $X$ is a knot.  
For such $X$, and any 2-coloring $(I,\phi,c)\in \lab_2(X)$, we have either $||c||< 0< |c|/2$, 
$\Delta(X,c)\in \hbar \mathbf{N}
(Q)_{h,\hbar}^{\otimes 2}$ or $||c||=|c|/2=0$. In the 
latter case, we know that $c$ takes a constant value 
on all arrows. 
By Definition~\ref{coproduct}, we have 
\[\Delta(X)=
X\otimes 1+1\otimes X\mod \hbar\mathbf{N}
(Q)_{h,\hbar}^{\otimes2}.
\]
On the other hand, we have  
\[
(p_h\otimes p_h)
(\Delta(X))=p_h(X)\otimes 1+1\otimes 
p_h(X)=\Delta_{V_h(L_h)}(p_h(X)) 
\] 
and hence $p_h$ is a coalgebra morphism. 

It remains to prove the following equation for 
the coquantization:
\begin{align}
\Delta(X)-\perm(\Delta(X))=\hbar (p_h\otimes 
p_h)^{-1}(\nu(p_h(X)))\mod \hbar^2\mathbf{N}
(Q)_{h,\hbar}^{\otimes 2}.
\label{formula:coquantizationofVL}\end{align} 
For $X$ a knot in $\mathbf{N}(Q)_{h,\hbar}$ 
we calculate $\Delta(X)\mod\hbar^2\mathbf{N}
(Q)_{h,\hbar}^{\otimes 2}$. Since $\Delta(X)=X\otimes 
1+1\otimes X\mod \hbar\mathbf{N}(Q)_{h,\hbar}$, we need to compute the cases where the power of $\hbar$ equals $1$. 
The power of $\hbar$ being $1$ means  that the corresponding coloring $(I,\phi,c)$ satisfies $|c|/2=||c||+2$ (by Definition~\ref{coproduct}). 
If $|c|=0$, then we have $I=\varnothing$ and  we may assume $|c|\geq 2$. 
Notice $||c||<0.$
Given $|c|/2=||c||+2$, $c$ is not a constant coloring, and so $1\leq |c|/2=||c||+2\leq 1$. 
Hence $|c|/2=1$ and $|I|=2$, i.e.,  there is only a pair of cutting points.
Similar to Section~\ref{sec:quantizationpart2}~Step~1, we write
\[T_{X} = \{e_1,\ldots,e_n\mid e_i\in Q\}\cup \{e_{1,1}^{*},\ldots,e_{n,k_n}^*\mid e_{i,j}^*\in Q^*\}.\]
For each pair $(e_{i},e_{i,j}^*)$, there is a unique coloring $c_{i,j}$, mapping $e_i$ to $2$ if and only if their heights satisfy $\lambda(e_i)>\lambda(e_{i,j}^*).$ 
By Definition~\ref{coproduct}, we have
\[\Delta(X)\equiv X\otimes1+1\otimes 
X+\sum_{e_{i,j}^*\in T_X} \varepsilon_{i,j} \hbar X_{c_{i,j}}^1\otimes 
X_{c_{i,j}}^2 \mod \hbar^2\mathbf{N}
(Q)_{h,\hbar}^{\otimes 2},\]
where $\varepsilon_{i,j}$ equals $1$ if $\lambda(e_i)<\lambda(e_{i,j}^*)$ and equals
$-1$ otherwise. 
Hence 
\begin{align}\Delta(X)-\perm(\Delta(X))\equiv \sum_{e_{i,j}^*\in T_X} \varepsilon_{i,j} \hbar\left(X_{c_{i,j}}^1\otimes X_{c_{i,j}}^2-X_{c_{i,j}}^2\otimes X_{c_{i,j}}^1\right)\mod \hbar^2\mathbf{N}(Q)_{h,\hbar}^{\otimes 2}.\label{formula:deltapermutationdelta}\end{align} 
We need to check that, for the above equation, 
after multiplying $\hbar^{-1}$ and then applying 
$(p_h\otimes p_h)$, the right side equals 
$\nu(p_h(X)) \mod \hbar\na^{\otimes 2}$.
We may assume $\lambda(e_i)>\lambda(e_{i,j}^*)$. 
Then in 
(\ref{formula:deltapermutationdelta}), we have 
that $\varepsilon_{i,j} = -1$ and the knot 
$X_{c_{i,j}}^2$ contains the edge following 
$e_{i,j}^*.$
As for $\nu(p_h(X))$, to apply $p_h$ to $X$, we let $X$ have heights that increase along the lexicographical order of the indices.
By Theorem~\ref{L}~(2), in $\nu(p_h(X))$, the pairing $\langle a_{i},a_{j}\rangle$ writes $\langle e_{i,j}^*,e_{i}\rangle = -1$ and $a_{i+1}\cdots a_{j-1}$ is the cyclic path underlying $X_{c_{i,j}}^2.$
This implies the desired equation and hence
(\ref{formula:coquantizationofVL}) holds.
\end{proof}

\begin{remark}[Comparison with
Schedler's work 
\cite{Schedler}]\label{rem:Schedler}
The above result is due to Schedler 
\cite{Schedler}.
In fact, if we set $h=1$ in our
$\na$ (in particular compared 
\eqref{quiver skein relations 1}
with \cite[(3.3)]{Schedler}), 
then it is exactly his Hopf algebra,
which he denotes by $A$.
\end{remark}

\section{Proof of the main theorem}\label{proofofmain}

Now we prove the main
theorem:

\begin{theorem}[Theorem \ref{main}]
Let $L:=(k \overline{ Q})_{\natural}$
be the necklace Lie bialgebra
of a finite double quiver $\overline Q$.
Then the following diagram 
\begin{equation}\label{diag:thm61}
\begin{tikzcd}
	\mathbf{N}(Q)_{h,\hbar} & {V_h(L_h)} \\
	{\varepsilon_\hbar(L_\hbar)} & S(L)
	\arrow["{p_h}", from=1-1, to=1-2]
	\arrow["{p_\hbar}"', from=1-1, to=2-1]
	\arrow["p", from=1-1, to=2-2]
	\arrow["{q_\hbar}", from=1-2, to=2-2]
	\arrow["{q_h}"', from=2-1, to=2-2]
\end{tikzcd}
\end{equation}
commutes and
gives a biquantization of $L$.
\end{theorem}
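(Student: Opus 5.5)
The proof assembles the results of Sections \ref{quancon} and \ref{coquancon} and reduces what remains to a check on generators. We define $p := q_\hbar\circ p_h$; the commutativity of \eqref{diag:thm61} then amounts to the single identity $q_h\circ p_\hbar = q_\hbar\circ p_h$. All four maps are algebra morphisms: $p_h$ by Lemma~\ref{5.1}, $p_\hbar$ by Lemma~\ref{buhuizaiyong1} together with Theorem~\ref{symmisomorphism}, and $q_h, q_\hbar$ by construction. All are also $k[h,\hbar]$-semilinear along the augmentation square. By Proposition~\ref{remark:general form2} (or simply because knots generate $\na$ as an algebra), it therefore suffices to check the identity on a single knot $X=(a_1,\lambda_1)\cdots(a_l,\lambda_l)$.

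For the right-hand side, reorder heights modulo $\hbar$ using \eqref{quiver skein relations 2}. This gives $p_h(X)=a_1\cdots a_l\in L_h$, so $q_\hbar p_h(X)=\langle X\rangle\in S(L)$. For the left-hand side, use the expansion of Lemma~\ref{map}: $p_\hbar(X)=\langle X\rangle+\sum_i u_i$ with every $u_i\in\hbar\,\varepsilon_\hbar(L_\hbar)$. This holds because $\mathcal{J}(X)=\langle X\rangle+\sum_{n\ge2}(\cdots)$, and each coloring in $\lab_n^0(X)$ with $n\ge2$ carries the coefficient $\langle X,c\rangle=\pm\hbar^{|c|/2}$ with $|c|>0$. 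Setting $\hbar=0$ kills all of these terms, so $q_hp_\hbar(X)=\langle X\rangle$ as well. Commutativity follows. The same computation shows that $p$ is surjective and that $\ker p=h\na+\hbar\na$: since $\ker p_h=\hbar\na$ (Proposition~\ref{coquan}) and $\ker q_\hbar=hV_h(L_h)$ (Theorem~\ref{qhbar}), we get $\ker p=p_h^{-1}(hV_h(L_h))=h\na+\hbar\na$.

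Next we verify the biquantization axioms. Theorem~\ref{quantization varepsilon} gives $p_\hbar$ as a reduced quantization with $\ker p_\hbar=h\na$, and Theorem~\ref{coquantization} gives $p_h$ as a reduced coquantization with $\ker p_h=\hbar\na$. Theorem~\ref{qhbar} makes $q_\hbar$ a reduced quantization, and Corollary~\ref{qh} makes $q_h$ a reduced coquantization. Compatibility with the base diagram $k[h,\hbar]\to k[h],k[\hbar]\to k$ holds by construction, since each map is semilinear for the corresponding augmentation. For condition (2), $q_\hbar:V_h(L_h)\to S(L)$ is a morphism of co-Poisson bialgebras: it is the identity on $L$, and both the coproduct and the cobracket are extended from $L$ by the same rules (Example~\ref{3.1}). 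Likewise, $q_h:\varepsilon_\hbar(L_\hbar)\to S(L)$ is a morphism of Poisson bialgebras. Its coproduct reduces to the standard one because $\nabla_\hbar\equiv a\otimes1+1\otimes a \bmod \hbar$, and its bracket reduces to the Poisson bracket of $S(L)$ by Turaev's Theorem 11.4, since $\{a,b\}\equiv[a,b]\bmod\hbar$ on generators and both brackets satisfy the Leibniz rule. Finally, all maps in the diagram are bialgebra morphisms: $p_\hbar$ by Proposition~\ref{J shi bialgebra mophiam} and Corollary~\ref{E}, and $p_h$ by the proof of Theorem~\ref{coquantization}.

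The main obstacle is the generator-level identity $q_hp_\hbar(X)=\langle X\rangle$. The issue is that the summands of $\mathcal J(X)$ in $\mathcal{F}$ involve tensor powers and the map $\mathrm{symm}$, and one must confirm that no $\hbar$-free contribution beyond the $n=1$ term survives. Concretely, one has to show that $\lab_n^0(X)$ with $n\ge2$ forces $I\neq\varnothing$. This holds because a coloring with $I=\varnothing$ is constant on a knot, so $n\ge2$ would leave an empty tensor factor, which $\lab_n^0$ excludes. A secondary point needing care is that $\na$ is generated by knots, so that checking multiplicative maps on knots suffices; this is where Proposition~\ref{remark:general form2} is used.
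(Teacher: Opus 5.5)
Your proposal is correct and follows the paper's proof. You check commutativity on knots, combining the expansion $p_\hbar(X_\alpha)=\alpha+u$ with $u\in\hbar\,\varepsilon_\hbar(L_\hbar)$ from Lemma~\ref{map} with $q_\hbar p_h(X_\alpha)=\alpha$, extend to all of $\na$ by multiplicativity and Proposition~\ref{remark:general form2}, and cite the (co)quantization properties from Sections~\ref{quancon} and~\ref{coquancon}. Your computation $\ker p=p_h^{-1}(hV_h(L_h))=h\na+\hbar\na$ and your check of condition (2) for $q_h$ and $q_\hbar$ are correct and fill in details the paper leaves implicit.
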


In the above diagram, the map $p_\hbar$, and a proof of 
the reduced quantization condition are given in Theorem 
\ref{quantization varepsilon}. The map $p_h$, and a 
proof of the reduced coquantization condition are given 
in Theorem~\ref{coquantization}. 
The maps $q_h$ and $q_\hbar$ are given in 
Corollary~\ref{qh} and Theorem \ref{qhbar}, 
respectively, where their quantization and 
coquantization conditions have also been proved. 

\begin{proof}[Proof of the above theorem]
We are left to prove the commutativity of the diagram. 
For an arbitrary knot $X$ in $\na$, we know that $X$ is 
a lift of a cyclic path, denoted by $\alpha$.
We write $X=X_\alpha$.
By Lemma \ref{map}, we have $p_\hbar(X_\alpha)=\alpha+u$ 
for some $u\in \hbar \varepsilon_\hbar(L)$. 
As $q_h$ maps $\hbar$ to zero, we have 
$q_h(p_\hbar(X_\alpha))=\alpha$.
By the definition of $p_h$, we have 
$q_\hbar(p_h(X_\alpha))=\alpha$.
Thus, the above diagram is commutative for any knots in 
$\na $. 
For an arbitrary link $X\in \na$, by Proposition 
\ref{remark:general form2}, $X=X'+\hbar X''$ where $X'$ 
is a linear combination over $k[h]$ of products of 
knots. Then\[\begin{aligned}
p_\hbar(X)&=p_\hbar(X'+\hbar X'')=p_\hbar(X')+\hbar 
p_\hbar( X''),\\q_h(p_\hbar(X))&=q_h(p_\hbar(X')+\hbar 
p_\hbar( X''))=q_h(p_\hbar(X')),\\
p_h(X)&=p_h(X'+\hbar X'')=p_h(X'),\\
q_\hbar(p_h(X))&=q_\hbar(p_h(X')).
\end{aligned}
\] 
Since $X'$ is a linear combination over $k[h]$ of 
products of knots
and $q_h\circ p_\hbar$
and $q_\hbar\circ p_h$ both
commute with products,
the commutativity of
\eqref{diag:thm61}
holds for arbitrary 
links.
\end{proof}

We next give a simple example, which might
be helpful for
the interested readers to understand
the above various maps.
This example does not fully illustrate
the complicated argument given in 
Section \ref{quancon}; that said,
we end with a remark explaining 
how such a situation arises.

\begin{example}\label{ex:Jordan}
Let $Q$ be the so-called framed
Jordan quiver whose double $\overline Q$ is 
as follows:
\begin{center}
\begin{tikzpicture}[>=Stealth, line cap=round, line join=round]
    \coordinate (L) at (0,0);
    \coordinate (R) at (1.5,0);
    \coordinate (RU) at (1.5,0.7);
    \coordinate (RD) at (1.5,-.7);

    \fill (L) circle (2.5pt)node[below] {$v_2$};
    \fill (R) circle (2.5pt)node[right] {$v_1$};

        \draw[shorten <=.1cm,-] ([yshift= 3pt]R) to[out=45,in=0] node[above] {\quad$a$} (RU);
    \draw[shorten >=0.05cm,->] (RU) to[out=180,in=135] ([yshift= 3pt]R);
    \draw[shorten <=.1cm,-] ([yshift= -3pt]R) to[out=-135,in=180] (RD);
    \draw[shorten >=0.051cm,->] (RD) to[out=0,in=-45] node[below] {\quad$a^*$} ([yshift= -3pt]R);
    \draw[shorten >=0.2cm,shorten <=.2cm,->]  ([yshift= 2pt]L) -- node[above] {$b$} ([yshift= 2pt]R);
    \draw[shorten >=0.2cm,shorten <=.2cm,->]  ([yshift= -2pt]R) -- node[below] {$b^*$} ([yshift= -2pt]L);
\end{tikzpicture}
\end{center} 
We take two necklaces in $(k \overline{Q})_\natural$: 
$\alpha=aa^*$ and $\beta=b^*ba^*a^*$. Then 
\begin{align}
\{\alpha,\beta\}&=\{aa^*,b^*ba^*a^*\}
=2\langle a,a^*\rangle a^*b^*ba^*=2a^*b^*ba^*,
\label{ex:bracketofaplhabeta}\\
\nu(\alpha)&=\langle a,a^*\rangle v_1 
\wedge v_1=0,\label{ex:cobracketalpha}\\
\nu(\beta)&=\langle b^*,b\rangle 
a^*a^*\wedge v_1=v_1\otimes a^*a^*-a^*a^*\otimes v_1.
\label{ex:cobracktbeta}
\end{align}
Choose two liftings of $\alpha$ and $\beta$ respectively: 
$X=(a,1)(a^*,2),\ Y=(b^*,1)(b,2)(a^*,3)(a^*,4)\in \na$. 
Then by recursively applying
\eqref{quiver skein relations 1},
we get
\begin{align*}
X*Y
&=(a,1)(a^*,2)*(b^*,1)(b,2)(a^*,3)(a^*,4)\\
&=(a,1)(a^*,2)\&(b^*,3)(b,4)(a^*,5)(a^*,6)\\
&=(a,1)(a^*,6)\&(b^*,2)(b,3)(a^*,4)(a^*,5)\\
&=(a,3)(a^*,6)\&(b^*,1)(b,2)(a^*,4)(a^*,5)\\
&=(a,4)(a^*,6)\&(b^*,1)(b,2)(a^*,3)(a^*,5)+h\langle a,a^*\rangle 
(a^*,6)(b^*,1)(b,2)(a^*,5)\\
&=(a,5)(a^*,6)\&(b^*,1)(b,2)(a^*,3)(a^*,4)+2h (a^*,6)(b^*,1)(b,2)(a^*,3)\\
&=Y*X+2h(a^*,6)(b^*,1)(b,2)(a^*,3),
\end{align*}
which is a lifting of
\eqref{ex:bracketofaplhabeta}.

We now compute $q_\hbar(p_h(X))$, $q_h(p_\hbar(X)),$ $q_\hbar(p_h(Y))$, and $q_h(p_\hbar(Y))$.
We first find the non-trivial 
colorings of $X$ 
and $Y$ respectively. 
For a coloring
$(I,\phi,c)\in \lab_2(X)$ with nonempty $I$, we have 
$I=\{a,a^*\}$, $\phi(a)=a^*,\phi(a^*)=a$ 
and $c(a)=1$, $c(a^*)=2$.
For any coloring
$(I',\phi',c')\in \lab_2(Y)$ with nonempty $I'$, 
we have $I'=\{b,b^*\}$ and $\phi'$ to be $\phi'(b)=b^*,\phi'(b^*)=b$. 
By the heights of $b$ and $b^*$, one has to color $b$ with $2$ and $b^*$ with $1$. 
The heights of $X$ are $Y$ are assigned as follows:
\begin{center}
\begin{tikzpicture}[>=Stealth, line cap=round, line join=round]
    \coordinate (L) at (0,0);
    \coordinate (R) at (1.5,0);
    \coordinate (RU) at (1.5,0.7);
    \coordinate (RD) at (1.5,-.7);

    \fill (R) circle (2.5pt);

    \draw[shorten <=.1cm,-] ([yshift= 3pt]R) to[out=45,in=0] node[above] {\quad 1} (RU);
    \draw[shorten >=0.05cm,->] (RU) to[out=180,in=135] ([yshift= 3pt]R);
    \draw[shorten <=.1cm,-] ([yshift= -3pt]R) to[out=-135,in=180] (RD);
    \draw[shorten >=0.051cm,->] (RD) to[out=0,in=-45] node[below] {\quad$2$} ([yshift= -3pt]R);
\end{tikzpicture}
\quad\quad\quad\quad
\begin{tikzpicture}[>=Stealth, line cap=round, line join=round]
    \coordinate (L) at (0,0);
    \coordinate (R) at (1.5,0);
    \coordinate (RU) at (1.5,0.7);
    \coordinate (RD) at (1.5,-.7);

    \fill (L) circle (2.5pt);
    \fill (R) circle (2.5pt);

    \draw[shorten <=.1cm,-] ([yshift= -3pt]R) to[out=-135,in=180] (RD);
    \draw[shorten >=0.051cm,->] (RD) to[out=0,in=-45] node[below] {\quad$1$} ([yshift= -3pt]R);
    \draw[shorten >=0.2cm,shorten <=.2cm,->]  ([yshift= 2pt]L) -- node[above] {$2$} ([yshift= 2pt]R);
    \draw[shorten >=0.2cm,shorten <=.2cm,->]  ([yshift= -2pt]R) -- node[below] {$1$} ([yshift= -2pt]L);
\end{tikzpicture}
\end{center}  
For $Y$, we label two $a^*$'s by $a_1^*$ 
and $a_2^*$ respectively; that is, we
write
$Y=(b^*,1)(b,2)(a^*_1,3)(a^*_2,4)$,
then the $f$ in Definition \ref{coproduct} writes
\begin{align*}
f:&P_Y\to P_Y,\\ 
&b^*\mapsto a_1^*,\ b \mapsto b,\ a_1^*\mapsto a_2^*,\ \ a_2^*\mapsto b^*. 
\end{align*}
Thus, $P_Y=\{a_1^*,a_2^*, b^*\}\sqcup\{b\} $, listed in 
the descending
order of colorings. 
Then, by the notations in 
Definition \ref{coproduct}, we get 
$Y_{c'}^1=Y_1=(a^*,3)(a^*,4),\ Y_{c'}^2=Y_2=v_2$. We also get 
$|c_-'|=1,|c'|=2$ and $||c'||=-1$.  
A similar procedure applying to $X$ 
gives $X_c^1=X_c^2=v_1$, $|c_-|=0$, $|c|=2$ and 
$||d||=-1$.
Thus, we get the following
\begin{align*}
&\Delta(X)=X\otimes 1+ 1\otimes X+
\hbar v_1\otimes v_1
\\
&\Delta(Y)=1\otimes Y+Y\otimes 1
- \hbar (a^*,3)(a^*,4)\otimes v_2,
\end{align*}
and hence
\begin{align*}
&\hbar^{-1}(\Delta(X)-\perm(\Delta(X)))=0,\\
&\hbar^{-1}(\Delta(Y)-\perm(\Delta(Y))) 
=v_2 
\otimes (a^*,3)(a^*,4)-(a^*,3)(a^*,4)\otimes v_2.
\end{align*}
They are compatible with the above 
cobrackets \eqref{ex:cobracketalpha}
and \eqref{ex:cobracktbeta}.

For reader's convenience, 
we next find the images of $X$ and $Y$ 
under $p_h,\ p_\hbar,\ q_h$ and $q_{\hbar}$. 
By definition,
$$\mathcal{J}
(Y)=\sum_{n\geq0}q^{\otimes n}\circ \Delta^{n-1}
(Y)=b^*ba^*a^*-\hbar a^*a^*\otimes 
v_2+\sum_{n\geq3}q^{\otimes n}\circ \Delta^{n-1}
(Y).$$ 
Since for any vertex $v$, we have 
$\Delta(v)=1\otimes v+v\otimes1$, 
by induction we have 
\begin{align*}
\Delta^{n-1}(Y)=&\sum _{i=0}^{n-1}1^{\otimes 
i}\otimes 
Y\otimes  
1^{\otimes (n-1-i)}\\
&-\hbar\sum_{i+j\leq n-2}1^{\otimes i}\otimes (a^*,3)
(a^*,4) \otimes 1^{\otimes j}\otimes v_2\otimes  
1^{\otimes (n-2-i-j)}
\end{align*}
for any $n\geq 3$. 
Thus by $q(1)=0$, we get 
$\mathcal{J}(Y)=b^*ba^*a^*
-\hbar a^*a^*\otimes v_2$ and 
$p_\hbar(Y)=b^*ba^*a^*-(\hbar/2) a^*a^*v_2.$
We then have $p_h(Y)=b^*ba^*a^*$ 
and hence 
$$q_\hbar(p_h(Y))=q_h(p_\hbar(Y))=b^*ba^*a^*.$$  
Similarly, 
$$\Delta^n(X)=\sum _{i=0}^{n}1^{\otimes i}
\otimes X \otimes  
1^{\otimes (n-i)}+\hbar v_1\otimes 
\sum_{j=0}^{n-1}1^{\otimes j}\otimes v_1\otimes  
1^{\otimes (n-j-1)},$$ for $n\geq 2$, 
and hence 
$\mathcal{J}(X)=aa^*+\hbar v_1\otimes v_1$, 
$p_\hbar(X)=aa^*+(\hbar/2) v_1v_1$. Thus, 
$$q_\hbar(p_h(Y))=q_h(p_\hbar(Y))=aa^*.$$
\end{example}

\begin{remark}[Why is the quantization 
so complicated?]\label{rem:why} 
Our proof of the
quantization in Theorem \ref{quantization varepsilon}
is inspired by
Turaev's technical work (see \cite[\S12-15]{Turaev}).
However, our situation is even more complicated 
in the sense that, 
for loops on surfaces, we may assume
they are in generic position,
but this is not the case for necklaces:
there might be two necklaces that pass
through an edge multiple times, and we must
deal with such a situation carefully (see
\S\ref{sec:quantizationpart2}), which increases
the length and technicality of the paper.
\end{remark}

\end{document}